\titleformat*{\section}{\normalsize\bfseries}
\titleformat*{\subsection}{\normalsize\bfseries}
\def\R{\mathbb{R}}
\def\e{{\varepsilon}}        
\def\p{\partial}
\newtheorem{thm}{Theorem}[section]
\newtheorem{lem}[thm]{Lemma}
\newtheorem{cor}[thm]{Corollary}
\newtheorem{prop}[thm]{Proposition}
\newtheorem{rem}[thm]{Remark}
\begin{document}

\title{
\vspace{-1cm}
\large{\bf Large Time Behavior and Optimal Decay Estimate for Solutions to the Generalized Kadomtsev--Petviashvili--Burgers Equation in 2D}}
\author{Ikki Fukuda and Hiroyuki Hirayama 
}
\date{}
\maketitle

\footnote[0]{2020 Mathematics Subject Classification: 35B40, 35Q53.}

\vspace{-0.75cm}
\begin{abstract}
We consider the Cauchy problem for the generalized Kadomtsev--Petviashvili--Burgers equation in 2D. This is one of the nonlinear dispersive-dissipative type equations, which has a spatial anisotropic dissipative term. Under some suitable regularity assumptions on the initial data $u_{0}$, especially the condition $\p_{x}^{-1}u_{0} \in L^{1}(\R^{2})$, it is known that the solution to this problem decays at the rate of $t^{-\frac{7}{4}}$ in the $L^{\infty}$-sense. In this paper, we investigate the more detailed large time behavior of the solution and construct the approximate formula for the solution at $t\to \infty$. Moreover, we obtain a lower bound of the $L^{\infty}$-norm of the solution and prove that the decay rate $t^{-\frac{7}{4}}$ of the solution given in the previous work to be optimal. 
\end{abstract}

\medskip
\noindent
{\bf Keywords:} 
KP--Burgers equation; Cauchy problem; large time behavior; optimal decay estimate. 

\section{Introduction}  

We consider the Cauchy problem for the generalized Kadomtsev--Petviashvili--Burgers equation:
\begin{align}\label{KPB}
\begin{split}
& u_{t} + u^{p}u_{x} + u_{xxx} + \e v_{y} -\nu u_{xx} = 0, \ \ v_{x}=u_{y}, \quad (x, y) \in \R^{2}, \ t>0,\\
& u(x, y, 0) = u_{0}(x, y), \quad (x, y) \in \R^{2}, 
\end{split}
\end{align}
where $u = u(x, y, t)$ is a real-valued unknown function, $u_{0}(x, y)$ is a given initial data, $p\ge2$ is an integer, $\e \in \{-1, 1\}$ and $\nu>0$. 
The subscripts $x$, $y$ and $t$ denote the partial derivatives with respect to $x$, $y$ and $t$, respectively. 
The purpose of our study is to analyze the large time behavior of the solutions to \eqref{KPB}. 
In particular, we focus on the asymptotic decay of the solutions. More precisely, we would like to discuss the optimality for the time decay estimate of the solutions. 

First of all, let us recall some known results related to this problem. If we take $\nu=0$ in \eqref{KPB}, we obtain the generalized Kadomtsev--Petviashvili equation: 
\begin{align}\label{gKP}
\begin{split}
& u_{t} + u^{p}u_{x} + u_{xxx} + \e v_{y} = 0, \ \ v_{x}=u_{y}, \quad (x, y) \in \R^{2}, \ t>0,\\
& u(x, y, 0) = u_{0}(x, y), \quad (x, y) \in \R^{2}.
\end{split}
\end{align}
This equation has the conserved mass and energy:
\[
M(u)=\int_{\R^2}u^2dxdy,\quad 
E(u)=\int_{\R^2}\left(\frac{1}{2}u_x^2-\frac{\e}{2}v^2-\frac{1}{(p+1)(p+2)}u^{p+2}\right)dxdy.
\]
Thus, natural energy space can be defined by
\[
E^1(\R^2):=\left\{u\in L^2(\R^2); \ \|u\|_{L^2}+\|u_x\|_{L^2}+\|v\|_{L^2}<\infty,\ v_x=u_y\right\}. 
\]
When $p=1$, \eqref{gKP} is classically called the KP-I ($\e=-1$) or KP-I\hspace{-.1em}I ($\e=1$) equation. 
Ionescu--Kenig--Tataru \cite{IKT08} proved  
the global well-posedness of the KP-I equation in $E^1(\R^2)$. 
On the other hand, Bourgain \cite{B93} proved that the 
KP-I\hspace{-.1em}I equation is globally well-posed in $L^2(\R^2)$. 
Furthermore, Hadac--Herr--Koch \cite{HHK09} 
showed the global well-posedness of the KP-I\hspace{-.1em}I equation 
for small initial data in anisotropic Sobolev space $\dot{H}^{-\frac{1}{2},0}(\R^{2})$, 
which is the scaling critical space. 
They also proved that the scattering of the solution for small initial data. 
The same methods by Bourgain \cite{B93} and Hadac--Herr--Koch \cite{HHK09} 
cannot be applied for the KP-I equation 
because the KP-I equation does not have good structure such as the KP-I\hspace{-.1em}I equation from the point of view of the non-resonance. 
The large time behavior of the solution to 
the KP-I and KP-I\hspace{-.1em}I equations are studied by Hayashi--Naumkin \cite{HN14}. 
They gave the decay estimates and the asymptotics of $u_x$ 
for small initial data, 
where $u$ is a unique global solution to the KP-I or KP-I\hspace{-.1em}I equations. 
The scattering result for the KP-I equation is obtained by Harrop-Griffiths--Ifrim--Tataru in \cite{HIT16}. 
For the generalized KP equation, 
Hayashi--Naumkin--Saut \cite{HNS99} 
considered the case $p\in \mathbb{Z}$ with $p\ge 2$ in \eqref{gKP}. 
They gave the decay estimates and the asymptotics of 
solution for small initial data. 
The decay estimates are improved by Niizato \cite{N11}. 
The results in \cite{HN14, HNS99, N11}, 
the decaying and the regularity of $\partial_x^{-1}u_0$ are imposed 
for initial data $u_0$, where $\partial_x^{-1}$ denotes the 
anti-derivative operator which will be defined by \eqref{anti-derivative} below. 
But the assumption for $\partial_x^{-1}u_0$ is not unnatural 
because of the form of the corresponding energy. 

Compared with \eqref{gKP}, Eq.~\eqref{KPB}, which has the dissipative term $-\nu u_{xx}$, 
should be considered as a dispersive-dissipative type equation, not as a purely dispersive type equation. 
However, especially in higher dimensions, there are very few results on the large time behavior of the solutions to dispersive-dissipative type equations with spatial anisotropic dissipative term like that of \eqref{KPB}. 
Most of the known results of dispersive-dissipative type equations are for the one-dimensional case. 
First, let us introduce some related results on the following Cauchy problem for the generalized KdV--Burgers equation, which is a one-dimensional version for \eqref{KPB}: 
\begin{align}\label{KdVB}
\begin{split}
& u_{t} + u^{p}u_{x} + u_{xxx} -\nu u_{xx} = 0, \quad x \in \R, \ t>0,\\
& u(x, 0) = u_{0}(x), \quad x \in \R.
\end{split}
\end{align}
For the above problem \eqref{KdVB}, since the global well-posedness in some Sobolev spaces can be easily proved by virtue of the dissipation effect, the main research topic is about the large time behavior of the solution. This topic was first studied by Amick--Bona--Schonbek \cite{ABS89}. They derived the time decay estimates of the solution, in the case of $p=1$. In particular, they showed that if $u_{0}\in H^{2}(\R) \cap L^{1}(\R)$, then the solution satisfies the following estimate: 
\begin{equation}\label{KdVB-decay}
\left\|u(\cdot, t)\right\|_{L^{q}} \le Ct^{-\frac{1}{2}\left(1-\frac{1}{q}\right)}, \ \ t>0, \ \ 2\le q\le \infty. 
\end{equation}
We note that the decay rate of the solution given by \eqref{KdVB-decay} is the same as the one for the solution to the linear heat equation: 
\[
u_{t}-\nu u_{xx}=0, \quad x \in \R, \ t>0. 
\] 
It means that the dissipation effect is stronger than the effects of the dispersion and the nonlinearity. 
Moreover, Karch \cite{K99-2} studied \eqref{KdVB} with $p=1$ in more details and extended the results given in \cite{ABS89}. Actually, if $u_{0}\in H^{1}(\R)\cap L^{1}(\R)$, then the following asymptotic formula can be established: 
\begin{equation}\label{Karch}
\lim_{t\to \infty}t^{\frac{1}{2}\left(1-\frac{1}{q}\right)}\left\|u(\cdot, t)-\chi(\cdot, t)\right\|_{L^{q}}
=0, \quad 1\le q\le \infty,
\end{equation}
where $\chi(x, t)$ is the self-similar solution to the following Burgers equation: 
\begin{equation*}
\chi_{t}+\chi \chi_{x}-\nu \chi_{xx}=0, \ \ x\in \R, \ t>0. 
\end{equation*}
This self-similar solution $\chi(x, t)$ can be obtained explicitly as follows (cf.~\cite{C51, H50}): 
\begin{equation*}
\chi(x, t):=\frac{1}{\sqrt{t}} \chi_{*} \left(\frac{x}{\sqrt{t}}\right), \ \ x\in \R,\ \ t>0,
\end{equation*}
where
\begin{equation*}
\chi_{*}(x):=\frac{\sqrt{\nu}\left(e^{\frac{M}{2\nu}}-1\right)e^{-\frac{x^{2}}{4\nu}}}{\sqrt{\pi}+\left(e^{\frac{M}{2\nu}}-1\right)\int_{x/\sqrt{4\nu}}^{\infty}e^{-y^{2}}dy}, 
\quad M:=\int_{\R}u_{0}(x)dx\neq0. 
\end{equation*}
Moreover, Hayashi--Naumkin \cite{HN06} improved the asymptotic rate given in \eqref{Karch} of the solution $u(x, t)$ to the above self-similar solution $\chi(x, t)$. More precisely, they showed that if we additionally assume $xu_{0}\in L^{1}(\R)$, then $u(x, t)$ tends to $\chi(x, t)$ at the rate of $t^{-\frac{1}{2}-\delta}$ in the $L^{\infty}$-sense, where $\delta\in (0, 1/2)$. Furthermore, Kaikina--Ruiz-Paredes \cite{KP05} succeeded to construct the second asymptotic profile of the solution, under the condition $xu_{0}\in L^{1}(\R)$. In view of the second asymptotic profile, they proved that the optimal asymptotic rate to $\chi(x, t)$ is $t^{-1}\log t$ in the $L^{\infty}$-sense, and also showed that the effect of the dispersion term $u_{xxx}$ appears from the second term of its asymptotics. For some related results about this problem, we can also refer to \cite{F19-1, F19-2, HKNS06}. When $p\ge2$ in \eqref{KdVB}, we can say that the nonlinearity is weak compared with the case of $p=1$, because if the solution $u(x, t)$ decays, then $u^{p}u_{x}$ decays fast. Therefore, the main part of the solution $u(x, t)$ is governed by the heat kernel:
\[
G(x, t):=\frac{1}{\sqrt{4\pi \nu t}}\exp\left(-\frac{x^{2}}{4\nu t}\right), \ \ x\in \R, \ t>0. 
\]
Actually, if $p\ge2$ and $u_{0}\in H^{1}(\R)\cap L^{1}(\R)$, then we are able to obtain the same decay estimate \eqref{KdVB-decay} and the following asymptotic formula: 
\begin{equation}\label{Karch-heat}
\lim_{t\to \infty}t^{\frac{1}{2}\left(1-\frac{1}{q}\right)}\left\|u(\cdot, t)-MG(\cdot, t)\right\|_{L^{q}}
=0, \quad 1\le q\le \infty. 
\end{equation}
The above formula \eqref{Karch-heat} can be found in e.g.~\cite{K99-1, K97-2} by Karch. Especially in \cite{K99-1}, he studied \eqref{KdVB} in details when $p\ge2$ (in fact, for more general nonlinearities are treated), and constructed the second asymptotic profiles of the solution. Indeed, it was shown in \cite{K99-1} that if we additionally assume $xu_{0}\in L^{1}(\R)$, then the optimal asymptotic rates to $MG(x, t)$ are given by $t^{-1+\frac{1}{2q}}$ when $p>2$ and $t^{-1+\frac{1}{2q}}\log t$ when $p=2$, respectively, in the $L^{q}$-sense. Furthermore, according to his results, it can be seen that the effects of the nonlinear term $u^{p}u_{x}$ and the dispersion term $u_{xxx}$ appear from the second asymptotic profile of the solution, in the case of $p>2$. However, we are able to find that the effect of 
dispersion term does not appear in the second asymptotic profile when $p=2$. The results introduced in this paragraph also hold for more general dispersive-dissipative type equations which has a similar structure to \eqref{KdVB}, such as the (generalized) BBM--Burgers equation: 
\[
u_{t}-u_{xxt}+u^{p}u_{x} + u_{xxx} -\nu u_{xx}=0, \ \ x\in \R, \ t>0. 
\]
For details, see e.g.~\cite{ABS89, HKN07, HKNS06, K97-1, K97-2, K99-1} and references there in. 

The asymptotic formulas \eqref{Karch} and \eqref{Karch-heat} tell us that the dispersion term $u_{xxx}$ can be negligible in the sense of the first approximation, due to the dissipation effect. From this point of view, we can expect the same phenomenon also to occur in our target equation \eqref{KPB}. However, \eqref{KPB} has a spatial anisotropy and the dissipative term $-\nu u_{xx}$ works only in the $x$-direction. 
Moreover, we also need to consider the effect of the dispersion term $\e v_{y}$ in the $y$-direction. 
Therefore, the method used in one-dimensional problems cannot be applied to the analysis for \eqref{KPB}. Of course, the method used for dispersive type equations like \eqref{gKP} also cannot be applied to \eqref{KPB}.
For this reason, it is difficult to analyze the structure of the solution to \eqref{KPB}, 
and there is only a previous result \cite{M99} on the large time behavior of the solution, up to the authors knowledge (we will introduce it later). Therefore, if we can obtain some new perspectives on \eqref{KPB}, then it is expected that those results help us to construct a general theory for spatial anisotropic dispersive-dissipative type equations.

Next, we would like to explain about the known results for \eqref{KPB} given by Molinet \cite{M99}. Before doing that, in order to analyze \eqref{KPB}, for $m \in \mathbb{N}$ and $s\ge0$, let us introduce some function spaces: 
\begin{align}
& \dot{\mathbb{H}}^{-m}_{x}(\R^{2}) := \left\{ f \in \mathcal{S}'(\R^{2}); \ \|f\|_{\dot{\mathbb{H}}^{-m}_{x}} := \left\| \xi^{-m} \hat{f} \right\|_{L^{2}} < \infty \right\},  \label{space-Hm} \\
& X^{s}(\R^{2}) := \left\{ f \in H^{s}(\R^{2}); \ \|f\|_{X^{s}} := \|f\|_{H^{s}} + \left\| \mathcal{F}^{-1}\left[ \xi^{-1}\hat{f}(\xi, \eta)\right] \right\|_{H^{s}} <\infty \right\}.  \label{space-Xs}
\end{align}
Moreover, we shall define the anti-derivative operator $\p_{x}^{-1}$ acting on $\dot{\mathbb{H}}^{-m}_{x}(\R^{2})$ by 
\begin{equation}\label{anti-derivative}
\p_{x}^{-1}f(x, y):=\mathcal{F}^{-1}\left[ (i\xi)^{-1}\hat{f}(\xi, \eta) \right](x, y).
\end{equation}
We can easily to see that $X^{s}(\R^{2})$ is embedded in $\dot{\mathbb{H}}^{-m}_{x}(\R^{2})$ 
for any $m\in \mathbb{N}$ and $s\ge 0$. 
Therefore, for $u \in X^{1}(\R^{2})$, $v_{x}=u_{y}$ can be rewritten by $v = \p_{x}^{-1}u_{y}$. 
Thus, \eqref{KPB} is equivalent to the following Cauchy problem: 
\begin{align}\label{KPB-s}
\begin{split}
& u_{t} + u^{p}u_{x} + u_{xxx} + \e \p_{x}^{-1}u_{yy} -\nu u_{xx} = 0, \quad (x, y) \in \R^{2}, \ t>0,\\
& u(x, y, 0) = u_{0}(x, y), \quad (x, y) \in \R^{2}. 
\end{split}
\end{align}
In what follows, we would like to consider \eqref{KPB-s} instead of \eqref{KPB}. 
Now, let us recall some known results about the Cauchy problem \eqref{KPB-s}. 
First of all, we shall introduce the fundamental results about the global well-posedness of \eqref{KPB-s}. Molinet~\cite{M99} constructed the solutions to \eqref{KPB}, provided the initial data $u_{0} \in X^{s}(\R^{2})$ for $s>2$. More precisely, he showed that if $p\ge1$ and $\left\|u_{0}\right\|_{H^{1}}+\left\|\p_{x}^{2}u_{0}\right\|_{L^{2}}+\left\|\p^{-1}_{x}\p_{y}u_{0}\right\|_{L^{2}}$ is sufficiently small, then \eqref{KPB-s} has a unique global solution $u(x, y, t)$ satisfying the following properties: 
\begin{align*}
&u \in C_{b}([0, \infty); X^{s}(\R^{2})) \cap L^{2}(0, \infty; H^{s}(\R^{2})), \\
&\p_{x}^{l}u \in C_{b}([1, \infty); H^{s}(\R^{2})) \cap L^{2}(1, \infty; H^{s}(\R^{2})), \ \ l \in \mathbb{N}. 
\end{align*}
For more details, see Theorem~\ref{thm.u-SDGE} below. Moreover, for the related results about the global well-posedness of \eqref{KPB-s}, we can also refer to another his paper \cite{M00}. Furthermore, for the case $p=1$, 
we also remark that the global well-posedness of (\ref{KPB-s}) is improved
by Kojok~\cite{K07} for $\e =1$ and Darwich~\cite{D12} for $\e =-1$ (see, also \cite{MR02}). 

In addition to the global well-posedness, Molinet also discussed the large time behavior of the solution to \eqref{KPB-s} in \cite{M99}. More precisely, he gave some results about the time decay estimates for the solution (see Propositions~\ref{prop.u-decay-L2}, \ref{prop.u-decay-L2-high} and \ref{prop.u-decay-Linf} below). Especially, under the additional assumptions $p\ge2$, $u_{0} \in X^{s}(\R^{2})$ for $s\ge3$, $\p_{x}^{-1}u_{0} \in L^{1}(\R^{2})$ and $\p_{x}^{-1}\p_{y}^{m}u_{0} \in L^{1}(\R^{2})$ for some integer $m\ge3$, if $\p_{x}^{-1}\p_{y}^{j}u_{0} \in L^{1}(\R^{2})$ with $j \in \left\{0, 1, \cdots, \min\{[s], m\}-3\right\}$, then the solution to \eqref{KPB-s} satisfies 
\begin{equation}\label{u-decay-Molinet}
\left\|\p_{x}^{l}\p_{y}^{j}u(\cdot, \cdot, t)\right\|_{L^{\infty}} \le Ct^{-\frac{7}{4}-\frac{l}{2}}, \ \ t\ge1, 
\end{equation}
for all non-negative integer $l$. Here, we note that this decay estimate \eqref{u-decay-Molinet} is a different from not only \eqref{KdVB-decay} but also the one for the solution to two-dimensional parabolic type equations. Actually, this estimate \eqref{u-decay-Molinet} is constructed by the interactions of the dissipation $-\nu u_{xx}$ in the $x$-direction, dispersion $\e \p_{x}^{-1}u_{yy}$ in the $y$-direction and the anti-derivative $\p_{x}^{-1}$ on the initial data $u_{0}$. We also remark that the different decay estimate can be obtained in the case of $p=1$ (for details, see Theorem~5.2 in \cite{M99}). However, we do not treat it in this paper, because this case is some what complicated to discuss the more detailed large time behavior. 

In \cite{M99}, Molinet also mentioned about the optimality for the decay rate $t^{-\frac{7}{4}}$ given in \eqref{u-decay-Molinet}. 
To state such a result, we introduce the following linearized Cauchy problem for \eqref{KPB-s}: 
\begin{align}\label{LKPB}
\begin{split}
& \tilde{u}_{t} + \tilde{u}_{xxx} + \e \p_{x}^{-1}\tilde{u}_{yy} -\nu \tilde{u}_{xx} = 0, \ \ (x, y) \in \R^{2}, \ t>0, \\
& \tilde{u}(x, y, 0) = u_{0}(x, y), \ \ (x, y) \in \R^{2}. 
\end{split}
\end{align}
For this problem, he proved that if we assume $u_{0}\in X^{1}(\R^{2})$ and $(1+y^{4})\p_{x}^{-1}u_{0}\in L^{1}(\R^{2})$, then the following results hold for almost every $y \in \R$ and all $t\ge1$: 
\begin{equation}\label{Molinet-lemmas}
\lim_{t\to \infty}t^{3}\int_{\R}\tilde{u}^{2}(x, y, t)dx=\frac{\sqrt{\pi}}{16\nu^{2}}\left(\int_{\R^{2}}\p_{x}^{-1}u_{0}(x, y)dxdy\right)^{2}, \quad 
t^{\frac{5}{4}}\int_{\R}\left|\tilde{u}(x, y, t)\right|dx \le C(y),  
\end{equation}
where $C(y)$ is a certain positive constant depending on $y$. In particular, there exists $y_{0} \in \R$ such that the both results of Eq.~\eqref{Molinet-lemmas} hold for $y=y_{0}$. Therefore, we obtain
\begin{equation}\label{Molinet-pre}
\lim_{t\to \infty}t^{3}\int_{\R}\tilde{u}^{2}(x, y_{0}, t)dx
\le \left(\liminf_{t\to \infty}t^{\frac{7}{4}}\left\|\tilde{u}(\cdot, \cdot, t)\right\|_{L^{\infty}}\right)
\left(\limsup_{t\to \infty}t^{\frac{5}{4}}\int_{\R}\left|\tilde{u}(x, y_{0}
, t)\right|dx\right). 
\end{equation}
Therefore, combining \eqref{Molinet-lemmas} and \eqref{Molinet-pre}, we get 
\begin{equation}\label{Molinet-linear}
\liminf_{t\to \infty}t^{\frac{7}{4}}\left\|\tilde{u}(\cdot, \cdot, t)\right\|_{L^{\infty}}
\ge \frac{\sqrt{\pi}}{16\nu^{2}}\left(\int_{\R^{2}}\p_{x}^{-1}u_{0}(x, y)dxdy\right)^{2}C(y_{0})^{-1}. 
\end{equation}
In \cite{M99}, the optimality for the decay rate $t^{-\frac{7}{4}}$ given in \eqref{u-decay-Molinet} is concluded by based on the above estimate \eqref{Molinet-linear}. 
However, \eqref{Molinet-linear} is the estimate for the solution $\tilde{u}(x, y, t)$ to the linearized problem \eqref{LKPB} not for the original solution $u(x, y, t)$ to the nonlinear problem \eqref{KPB-s}. 
In order to conclude that the decay rate $t^{-\frac{7}{4}}$ given in \eqref{u-decay-Molinet} is optimal, it is necessary to evaluate $t^{\frac{7}{4}}\left\|u(\cdot, \cdot, t)\right\|_{L^{\infty}}$ uniformly from below. In addition, it seems that the assumption $(1+y^{4})\p_{x}^{-1}u_{0}\in L^{1}(\R^{2})$ is a little bit strong because the weight $1+y^{4}$ is not natural.  
Therefore, we would like to re-discuss them and derive the lower bound of the $L^{\infty}$-norm of the original solution $u(x, y, t)$ to \eqref{KPB-s}, without any weight assumption on the initial data $u_{0}$.

\medskip

\par\noindent
\textbf{\bf{Main Results.}} 
Now, we state our main results. First of all, let us introduce the following problem: 
\begin{align}\label{w-asymp}
\begin{split}
& w_{t} + \e \p_{x}^{-1}w_{yy} -\nu w_{xx} = -u^{p}u_{x}, \quad (x, y) \in \R^{2}, \ t>0,\\
& w(x, y, 0) = u_{0}(x, y), \quad (x, y) \in \R^{2}, 
\end{split}
\end{align}
where $u(x, y, t)$ is the original solution to \eqref{KPB-s}. Under some appropriate regularity assumptions on the initial data $u_{0}$, the solution $w(x, y, t)$ to \eqref{w-asymp} can be expressed by 
\begin{equation}\label{w-sol}
w(t)=K(t)*u_{0} -\frac{1}{p+1}\int_{0}^{t} \p_{x}K(t-\tau)*\left(u^{p+1}(\tau)\right) d\tau, 
\end{equation}
where $K(x, y, t)$ is defined by 
\begin{align}
& K(x, y, t) := t^{ -\frac{5}{4} } K_{*} \left( xt^{-\frac{1}{2}}, yt^{-\frac{3}{4}} \right), \quad (x, y)\in \R^{2}, \ \ t>0,  \label{DEF-K}\\
& K_{*}(x, y) := \frac{ 1 }{ 4\pi^{ \frac{3}{2} } \nu^{\frac{3}{4}} } \int_{0}^{\infty} r^{-\frac{1}{4} }e^{ -r } \cos \left( x \sqrt{\frac{r}{\nu}} +\frac{y^{2}}{4\e}\sqrt{\frac{r}{\nu}} - \frac{\pi}{4}\e \right) dr, \quad (x, y)\in \R^{2}.  \label{DEF-K*}
\end{align}
Here, we note that $K(x, y, t)$ is the integral kernel of the solution to the linearized problem for \eqref{w-asymp} (for details, see \eqref{w-linear} and \eqref{ap-derivation} below). By using the decay estimate for $K(x, y, t)$ (Proposition~\ref{prop.L-decay-K} below), under the same assumptions on the known result given in \cite{M99} (for details, see the assumptions of Proposition~\ref{prop.u-decay-Linf} below), we can see that the original solution $u(x, y, t)$ to \eqref{KPB-s} is well approximated by the above function $w(x, y, t)$. More precisely, we have the following result: 
\begin{thm}\label{thm.u-asymp}
Let $p\ge2$ be an integer and $s\ge3$. Suppose that $u_{0} \in X^{s}(\R^{2})$, $\p_{x}^{-1}u_{0} \in L^{1}(\R^{2})$, $\p_{x}^{-1}\p_{y}^{m}u_{0} \in L^{1}(\R^{2})$ for some integer $m\ge3$, and $\left\|u_{0}\right\|_{H^{1}}+\left\|\p_{x}^{2}u_{0}\right\|_{L^{2}}+\left\|\p^{-1}_{x}\p_{y}u_{0}\right\|_{L^{2}}$ is sufficiently small. If $\p_{x}^{-1}\p_{y}^{j}u_{0} \in L^{1}(\R^{2})$ with $j \in \left\{0, 1, \cdots, \min\{[s], m\}-3\right\}$, then the solution $u(x,y,t)$ to \eqref{KPB-s} satisfies 
\begin{equation}
\left\|\p_{x}^{l}\p_{y}^{j}\left(u(\cdot, \cdot, t)-w(\cdot, \cdot, t)\right)\right\|_{L^{\infty}}\le Ct^{-\frac{9}{4}-\frac{l}{2}}, \ \ t\ge2, \label{u-asymp}
\end{equation}
for all non-negative integer $l$, where $w(x, y, t)$ is given by \eqref{w-sol}. 
\end{thm}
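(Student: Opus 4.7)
Subtracting \eqref{w-asymp} from \eqref{KPB-s}, the nonlinear term $u^p u_x$ and the initial data cancel, so the residue $r := u - w$ satisfies the linear inhomogeneous Cauchy problem
\[
r_{t} + \e \p_{x}^{-1} r_{yy} - \nu r_{xx} = -u_{xxx}, \qquad r|_{t=0} = 0.
\]
Because $K(x,y,t)$ given by \eqref{DEF-K}--\eqref{DEF-K*} is the fundamental solution of the linear operator $\p_{t} + \e \p_{x}^{-1}\p_{y}^{2} - \nu \p_{x}^{2}$, Duhamel's principle yields
\[
r(t) = -\int_{0}^{t} K(t-\tau) * u_{xxx}(\tau)\,d\tau,
\]
and, after integrating by parts in $x$ to redistribute derivatives between $K$ and $u$,
\[
\p_{x}^{l}\p_{y}^{j} r(t) = -\int_{0}^{t} \p_{x}^{l+a}\p_{y}^{j} K(t-\tau) * \p_{x}^{3-a} u(\tau)\,d\tau
\]
for any $a \in \{0,1,2,3\}$.

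The two analytic inputs are the $L^{q}$-decay of the kernel and the Molinet-type decay of $u$. From the scaling identity \eqref{DEF-K} and Proposition~\ref{prop.L-decay-K}, a direct computation gives
\[
\left\|\p_{x}^{a}\p_{y}^{b} K(t)\right\|_{L^{q}} \le C\, t^{-\frac{5}{4}\left(1 - \frac{1}{q}\right) - \frac{a}{2} - \frac{3b}{4}}, \qquad t>0,
\]
for all integers $a, b \ge 0$ and all $q \in [1,\infty]$. Combined with Young's convolution inequality this bounds the Duhamel integrand in $L^{\infty}$ by $\|\p_{x}^{l+a}\p_{y}^{j} K(t-\tau)\|_{L^{q}}\|\p_{x}^{3-a}\p_{y}^{j} u(\tau)\|_{L^{q'}}$ for any conjugate pair $(q,q')$, and the factor involving $u$ is controlled by Propositions~\ref{prop.u-decay-L2}, \ref{prop.u-decay-L2-high} and \ref{prop.u-decay-Linf}, possibly after interpolation between the stated $L^{2}$- and $L^{\infty}$-bounds.

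I then split $\int_{0}^{t} = \int_{0}^{t/2} + \int_{t/2}^{t}$ and choose $(a, q)$ differently on each piece. On $[0, t/2]$, where $t - \tau$ and $t$ are comparable, I move as many $x$-derivatives as possible onto $K$ (take $a=3$) so as to exploit the fast decay $(t-\tau)^{-\frac{5}{4}(1-1/q) - \frac{l+3}{2} - \frac{3j}{4}}$ of the kernel, paired with $u(\tau)$ in the dual $L^{q'}$ norm which is uniformly controlled by Molinet's bounds. On $[t/2, t]$, where $t - \tau$ is small and the key issue is integrability in $\tau$, I keep few derivatives on $K$ (say $a=0$ or $a=1$) so that $\|\p_{x}^{l+a}\p_{y}^{j} K(t-\tau)\|_{L^{2}}$ scales as $(t-\tau)^{-\frac{5}{8} - \frac{l+a}{2} - \frac{3j}{4}}$ and is integrable near $\tau = t$, and pass the remaining $x$-derivatives onto $u$, whose high-order $L^{2}$-decay at $\tau \asymp t$ from Proposition~\ref{prop.u-decay-L2-high} supplies the extra powers of $t$. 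A careful tally should reproduce exactly $t^{-\frac{9}{4} - \frac{l}{2}}$, the extra factor $t^{-1/2}$ over Molinet's $t^{-\frac{7}{4} - \frac{l}{2}}$ reflecting the three additional $x$-derivatives carried by the source $u_{xxx}$.

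The main obstacle I anticipate is precisely this bookkeeping: finding a single pair $(a,q)$ in each half that simultaneously makes the $(t-\tau)$-factor integrable on $[t/2, t]$ and produces the target decay on $[0, t/2]$ without wasting powers of $t$. Doing so requires interpolating between the $L^{2}$-estimates of Propositions~\ref{prop.u-decay-L2} and \ref{prop.u-decay-L2-high} and the $L^{\infty}$-estimate \eqref{u-decay-Molinet} at intermediate derivative orders, and treating the short-time interval $\tau \in [0,1]$ separately (where Molinet's $t \ge 1$ bounds do not apply directly) via the Sobolev embedding $X^{s}(\R^{2}) \hookrightarrow L^{\infty}(\R^{2})$ for $s \ge 3$. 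Once the right exponents are identified, the actual estimates are routine applications of Young's inequality and integration of power functions.
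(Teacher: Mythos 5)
Your reduction of $u-w$ to the single Duhamel formula $r(t)=-\int_{0}^{t}K(t-\tau)*u_{xxx}(\tau)\,d\tau$ is correct and is a legitimate alternative to the paper's decomposition $u-w=(S-K)(t)*u_{0}+(D_{S}-D_{K})$. However, the analytic engine you propose to run on it is broken: the claimed kernel bound $\|\p_{x}^{a}\p_{y}^{b}K(t)\|_{L^{q}}\le Ct^{-\frac{5}{4}(1-\frac{1}{q})-\frac{a}{2}-\frac{3b}{4}}$ is false for every $q<\infty$. Indeed, $K_{*}(x,y)$ depends on $(x,y)$ only through the single variable $x+\frac{y^{2}}{4\e}$, so $\int_{\R^{2}}|K_{*}|^{q}\,dx\,dy=\int_{\R}\bigl(\int_{\R}|K_{*}|^{q}\,dx\bigr)dy$ diverges; equivalently, $|\wh{K}(\xi,\eta,t)|=\frac{1}{2\pi}e^{-\nu t\xi^{2}}$ is independent of $\eta$, so $K(t)\notin L^{2}(\R^{2})$ by Plancherel. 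Even the $q=\infty$ case fails once $b\ge1$: $\p_{y}K_{*}(x,y)=\frac{y}{2\e}F'\bigl(x+\frac{y^{2}}{4\e}\bigr)$ is unbounded along the parabola $x=-\frac{y^{2}}{4\e}$. This is why the paper only ever states $\p_{x}^{l}$-estimates for $S$ and $K$ in $L^{\infty}$ and always moves $y$-derivatives onto the data, never onto the kernel. With the $L^{q}$ bounds gone, both halves of your splitting collapse: on $[t/2,t]$ the $L^{2}$ pairing you rely on is vacuous, and the only surviving pairing $\|K\|_{L^{\infty}}\|f\|_{L^{1}}$ has the non-integrable singularity $(t-\tau)^{-\frac{5}{4}}$; on $[0,t/2]$ the $L^{\infty}$--$L^{1}$ pairing would require $\|\p_{y}^{j}u(\tau)\|_{L^{1}}$, which Molinet's results do not provide (only $u^{p+1}$ is controlled in $L^{1}$, via Cauchy--Schwarz on two $L^{2}$ factors).

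The paper's proof is built precisely to dodge these two obstructions. First, it keeps the source in the form $\p_{x}(u^{p+1})$ (and the linear part as $(S-K)*u_{0}=\p_{x}(S-K)*\p_{x}^{-1}u_{0}$), so that every $L^{\infty}$--$L^{1}$ pairing lands on a quantity that is genuinely in $L^{1}$: $\p_{x}^{-1}\p_{y}^{j}u_{0}$ by hypothesis, and $\p_{y}^{j}(u^{p+1})$ by Lemma~\ref{lem.u^p+1-est}. Second, for $\tau\in[t/2,t]$ it replaces Young's inequality by the Fourier-side estimate of Lemma~\ref{lem.Duhamel-est}, which buys integrability in $\eta$ by spending two $y$-derivatives of the source (the factor $(1+\eta^{2})^{-1}$ against $\|f\|_{L^{1}}+\|\p_{y}^{2}f\|_{L^{1}}$) and leaves only the integrable singularity $(t-\tau)^{-\frac{1}{2}}$ from the $\xi$-integral; this is why the estimate \eqref{u^p+1-est-2} for $\p_{x}^{l+1}\p_{y}^{j+2}(u^{p+1})$ is needed. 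If you want to keep your source $u_{xxx}$, you would have to reproduce some analogue of this mechanism — there is no choice of exponents $(a,q)$ for which the Young-inequality bookkeeping you describe can close, because the required norms are infinite rather than merely hard to balance.
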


\begin{rem}
{\rm 
We note that $w_{xxx}$ is not included in \eqref{w-asymp}, i.e. the effect of the dispersion term does not appear in the integral kernel $K(x, y, t)$. 
In this sense, the above approximate formula \eqref{u-asymp} tells us that the dispersion effect in the direction in which the dissipative term is working can be negligible as $t\to \infty$. 
}
\end{rem}

Moreover, we have succeeded to get a lower bound of the $L^{\infty}$-norm of the main part of $w(x, y, t)$ (see Proposition~\ref{prop.w-main-lower} below). Therefore, combining that estimate and the above Theorem~\ref{thm.u-asymp}, we are able to obtain the following lower bound of the $L^{\infty}$-norm of the solution $u(x, y, t)$ to \eqref{KPB-s}: 
\begin{thm}\label{thm.u-est-lower}
Under the same assumptions in Theorem~\ref{thm.u-asymp}, there exist positive constants $C_{\dag}>0$ and $T_{\dag}>0$ such that the solution $u(x, y, t)$ to \eqref{KPB-s} satisfies 
\begin{equation}\label{u-est-lower}
\left\|\p_{x}^{l}\p_{y}^{j}u(\cdot, \cdot, t)\right\|_{L^{\infty}} \ge C_{\dag}\left|\mathcal{M}_{j}\right|t^{-\frac{7}{4}-\frac{l}{2}}, \ \ t \ge T_{\dag}, 
\end{equation}
for all non-negative integer $l$, where the constant $\mathcal{M}_{j}$ is defined by 
\begin{equation}\label{DEF-mathM}
\mathcal{M}_{j} := \int_{\R^{2}}\p_{x}^{-1}\p_{y}^{j}u_{0}(x, y)dxdy-\frac{1}{p+1}\int_{0}^{\infty}\int_{\R^{2}}\p_{y}^{j}\left(u^{p+1}(x, y, t)\right)dxdydt. 
\end{equation}
\end{thm}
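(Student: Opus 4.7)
The idea is to chain together two approximations. Theorem~\ref{thm.u-asymp} already controls $\|u-w\|$ at the order $t^{-9/4-l/2}$, so it suffices to extract a clean leading profile from $w$ itself whose $L^\infty$-norm is bounded below by a multiple of $|\mathcal M|t^{-7/4-l/2}$. The natural candidate, read off from the representation formula \eqref{w-sol}, is the single profile $\mathcal M\,\partial_x^{l+1}K(t)$; its nonvanishing at the rate $t^{-7/4-l/2}$ will then yield the lower bound via the reverse triangle inequality.

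To produce this profile, I would apply $\partial_x^l\partial_y^j$ to \eqref{w-sol} and rewrite $\partial_y^j=\partial_x\circ\partial_x^{-1}\partial_y^j$, so that the linear piece becomes $\partial_x^{l+1}K(t)*(\partial_x^{-1}\partial_y^j u_0)$ and the Duhamel piece becomes $-\tfrac{1}{p+1}\int_0^t \partial_x^{l+1}K(t-\tau)*\partial_y^j(u^{p+1})(\tau)\,d\tau$. Since $\partial_x^{-1}\partial_y^j u_0\in L^1(\R^2)$ by hypothesis, a standard mass-centering step replaces the linear piece by $\bigl(\int_{\R^2}\partial_x^{-1}\partial_y^j u_0\bigr)\,\partial_x^{l+1}K(t)$ modulo a remainder controlled through the modulus of continuity of $\partial_x^{l+1}K(t,\cdot,\cdot)$ supplied by Proposition~\ref{prop.L-decay-K}. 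For the Duhamel piece, split $\int_0^t=\int_0^{t/2}+\int_{t/2}^t$: on $[0,t/2]$, freeze the kernel via $\partial_x^{l+1}K(t-\tau)=\partial_x^{l+1}K(t)+\bigl(\partial_x^{l+1}K(t-\tau)-\partial_x^{l+1}K(t)\bigr)$ to extract the main constant $-\tfrac{1}{p+1}\int_0^{\infty}\!\int_{\R^2}\partial_y^j(u^{p+1})\,dx\,dy\,d\tau$; on $[t/2,t]$ use \eqref{u-decay-Molinet} to dominate the integrand. Combining both contributions identifies $\mathcal{M}\,\partial_x^{l+1}K(t)$ as the main part of $\partial_x^l\partial_y^j w$, which is exactly the content of Proposition~\ref{prop.w-main-lower}.

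The pointwise lower bound on the profile is a straightforward scaling computation: from \eqref{DEF-K}, $\|\partial_x^{l+1}K(t)\|_{L^\infty}=t^{-7/4-l/2}\|\partial_x^{l+1}K_*\|_{L^\infty}$, and $\|\partial_x^{l+1}K_*\|_{L^\infty}>0$ can be verified by evaluating \eqref{DEF-K*} at $y=0$, where the $r$-integral reduces to a nonvanishing explicit oscillatory integral. Putting all pieces together,
\[
\|\partial_x^l\partial_y^j u(\cdot,\cdot,t)\|_{L^\infty}\ge c_l|\mathcal M|t^{-7/4-l/2}-o(t^{-7/4-l/2})-Ct^{-9/4-l/2},
\]
which exceeds $\tfrac12 c_l|\mathcal M|t^{-7/4-l/2}$ once $t\ge T_\dag$ is sufficiently large, giving \eqref{u-est-lower}. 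The main technical obstacle is establishing absolute convergence of the time integral defining $\mathcal{M}$ together with a decay rate strictly faster than $t^{-7/4-l/2}$ for its tail $\int_{t/2}^{\infty}$; this requires combining the pointwise estimate \eqref{u-decay-Molinet} with $L^2$-type decay from Propositions~\ref{prop.u-decay-L2}--\ref{prop.u-decay-L2-high} applied to $\partial_y^j(u^{p+1})$ through the Leibniz rule, exploiting $p\ge 2$ to secure integrability in $\tau$.
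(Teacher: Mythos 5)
Your overall architecture matches the paper's (reduce to $w$ via Theorem~\ref{thm.u-asymp}, extract a leading profile carrying the constant $\mathcal{M}$, bound it below, and conclude by a reverse triangle inequality), and your treatment of the Duhamel term and of the tail $\int_{t/2}^{\infty}$ is sound given Lemma~\ref{lem.u^p+1-est} and Corollary~\ref{M-uniform}. But there is a genuine gap at the step you call ``standard mass-centering'': you cannot replace $\p_{x}^{l+1}K(t)*\p_{x}^{-1}\p_{y}^{j}u_{0}$ by $\bigl(\int_{\R^{2}}\p_{x}^{-1}\p_{y}^{j}u_{0}\,dxdy\bigr)\,\p_{x}^{l+1}K(t)$ with an error that is $o(t^{-\frac{7}{4}-\frac{l}{2}})$ in $L^{\infty}(\R^{2})$, because the modulus of continuity of $K$ in the $y$-direction is not uniformly small at that scale. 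By \eqref{DEF-K*} the kernel depends on $y$ only through the phase $\frac{y^{2}}{4\e}\sqrt{r/\nu}$, so $\p_{y}K_{*}$ grows linearly in $y$, and translating $y$ by a fixed $w$ changes the phase by $\frac{2yw-w^{2}}{4\e}\sqrt{r/\nu}\,t^{-\frac{3}{2}}$, which is of order one when $|y|\sim t^{\frac{3}{2}}$. Hence $\sup_{(x,y)}\left|\p_{x}^{l+1}K(x,y-w,t)-\p_{x}^{l+1}K(x,y,t)\right|$ is only $O(t^{-\frac{7}{4}-\frac{l}{2}})$, not $o(t^{-\frac{7}{4}-\frac{l}{2}})$, so the reverse triangle inequality taken over all of $\R^{2}$ yields nothing. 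Proposition~\ref{prop.L-decay-K} supplies only sup-norm bounds, not the $y$-modulus of continuity your argument needs.

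This is precisely the anisotropy the paper's proof is built around: Theorem~\ref{thm.L-ap-LKPB} concentrates the initial mass only in the $x$-variable, keeping the $y$-convolution inside the profile $\mathcal{K}_{j}$, and the lower bound is then obtained by evaluating $\p_{x}^{l}\mathcal{K}_{j}+L^{\delta}$ at the single point $(0,0)$, where the relevant phase increment is $\frac{w^{2}}{4\e}\sqrt{r/\nu}\,t^{-\frac{3}{2}}$ and is made small by restricting to $|w|\le\kappa t^{\frac{3}{4}}$ with $\kappa$ chosen small relative to $|\mathcal{M}|/\|Q_{j}\|_{L^{1}}$ (Lemma~\ref{lem.I1-est}); the contributions from $|w|\ge \kappa t^{\frac{3}{4}}$ are killed by dominated convergence. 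Your argument can be repaired along the same lines --- perform the centering only in $x$ (and in the $z$-variable of the Duhamel term) and bound the resulting profile from below at $(x,y)=(0,0)$ rather than estimating a sup over $\R^{2}$ --- or, alternatively, by proving that $\p_{x}^{l+1}K_{*}(x,y)\to0$ as $|y|\to\infty$ uniformly in $x$, so that the bad region $|y|\gtrsim Rt^{\frac{3}{4}}$ can be discarded; neither ingredient appears in your write-up. A minor further point: the ``main technical obstacle'' you identify, namely the absolute convergence of the time integral in $\mathcal{M}$ and the decay of its tail, is already settled by \eqref{u^p+1-est-1}, and is not where the difficulty of this theorem lies.
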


\begin{rem}
{\rm 
By virtue of Theorem~\ref{thm.u-est-lower}, under the assumptions in Theorem~\ref{thm.u-asymp} and $\mathcal{M}_{j}\neq0$, we can conclude that the $L^{\infty}$-decay estimate \eqref{u-decay-Molinet} is optimal with respect to the time decaying order. 
We will give a sufficient condition for $\mathcal{M}_{j}\neq0$ in Section~3 (see Remarks \ref{rem-M} and \ref{rem-M2} below). 
Furthermore, we note that the weight condition for the initial data $u_{0}$ such as in \cite{M99} is not necessary in our result. 
}
\end{rem}

The rest of this paper is organized as follows. In Section~2, we analyze the linearized problem for \eqref{KPB-s} and prove some decay estimates for the solution. Moreover, we derive the leading term of its solution and construct the corresponding asymptotic formula. 
The proofs of our main results Theorems~\ref{thm.u-asymp} and \ref{thm.u-est-lower} are given in Section~3. This section is divided into three subsections. In Section~3.1, we prepare a couple of basic lemmas including the known results given by Molinet \cite{M99}. After that, we prove Theorem~\ref{thm.u-asymp} in Section~3.2. Finally, we give the proof of Theorem~\ref{thm.u-est-lower} in Section~3.3. 
We can say that this Section~3.3 is the main part of the proofs, because Theorem~\ref{thm.u-est-lower} is the most difficult to prove. In order to show it, we need to consider the both effects of the linear part and the nonlinear part, to get a lower bound of the solution $u(x, y, t)$. 

\medskip

\par\noindent
\textbf{\bf{Notations.}} 
We define the Fourier transform of $f$ and the inverse Fourier transform of $g$ as follows:
\begin{align*}
\begin{split}
&\hat{f}(\xi, \eta) = \mathcal{F}[f](\xi, \eta) := \frac{1}{2\pi}\int_{\R^{2}}e^{-ix\xi-iy\eta}f(x, y)dxdy, \\
&\check{g}(x, y) = \mathcal{F}^{-1}[g](x, y):=\frac{1}{2\pi}\int_{\R^{2}}e^{ix\xi+iy\eta}g(\xi, \eta)d\xi d\eta.
\end{split}
\end{align*}

For $1\le p\le \infty$ and $s\ge 0$, $L^{p}(\R^{2})$ and $H^{s}(\R^{2})$ denote the Lebesgue spaces and the Sobolev spaces, respectively. Moreover, we also use the additional function spaces $\dot{\mathbb{H}}^{-m}_{x}(\R^{2})$ and $X^{s}(\R^{2})$ being defined by \eqref{space-Hm} and \eqref{space-Xs}, respectively. The anti-derivative operator $\p_{x}^{-1}$ is defined by \eqref{anti-derivative}.

\medskip
Let $T>0$, $1\le p \le \infty$ and $X$ be a Banach space. Then, $L^{p}(0,T; X)$ denotes the space of measurable functions $u: (0, T) \to X$ such that $\|u(t)\|_{X}$ belongs to $L^{p}(0, T)$. Also, $C([0, T]; X)$ denotes the subspace of $L^{\infty}(0, T; X)$ of all continuous functions $u: [0, T] \to X$. Moreover, $C_{b}([0, \infty); X)$ is defined as the space of all continuous and uniformly bounded functions $u: [0, \infty) \to X$. 

\medskip
Throughout this paper, $C$ denotes various positive constants, which may vary from line to line during computations. Also, it may depend on the norm of the initial data. However, we note that it does not depend on the space variable $(x, y)$ and the time variable $t$. 

\section{Linearized Problem}  

In this section, we would like to analyze the linearized problem for \eqref{KPB-s} and derive the leading term of its solution. First of all, let us re-consider the following Cauchy problem: 
\begin{align}\tag{\ref{LKPB}}
\begin{split}
& \tilde{u}_{t} + \tilde{u}_{xxx} + \e \p_{x}^{-1}\tilde{u}_{yy} -\nu \tilde{u}_{xx} = 0, \ \ (x, y) \in \R^{2}, \ t>0, \\
& \tilde{u}(x, y, 0) = u_{0}(x, y), \ \ (x, y) \in \R^{2}. 
\end{split}
\end{align}
The solution to the above Cauchy problem can be written by 
\[\tilde{u}(x, y, t)=(S(t)*u_{0})(x, y), \ \ (x, y) \in \R^{2}, \ t>0, \] 
where the integral kernel $S(x, y, t)$ is defined by 
\begin{equation}\label{DEF-S}
S(x, y, t) := \mathcal{F}^{-1} \left[ \frac{1}{2\pi} e^{ - \nu t \xi^{2} + it\left(\xi^{3} - \e \frac{\eta^{2}}{\xi} \right) } \right] (x, y). 
\end{equation}
Now, we shall analyze the decay properties and the asymptotic behavior of the solution to \eqref{LKPB}. 
We start with introducing the following $L^{\infty}$-decay estimates for the kernel $S(x, y, t)$ and the solution $\tilde{u}(x, y, t)$ to \eqref{LKPB}, given by Molinet \cite{M99}. For the proofs, see Lemma~4.3 in \cite{M99}.
\begin{prop}\label{prop.L-decay-S}
Let $l$ be a non-negative integer. Then, we have 
\begin{equation}\label{L-decay-S}
\left\| \p_{x}^{l} S(\cdot, \cdot, t) \right\|_{L^{\infty}} \le C t^{ -\frac{5}{4} -\frac{l}{2} }, \ \ t>0. 
\end{equation}
\end{prop}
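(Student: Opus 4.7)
My plan is to reduce the two-dimensional inverse Fourier transform defining $S$ to a one-dimensional integral by performing the $\eta$-integration first via the Fresnel formula, and then to bound the resulting integral trivially by modulus. Starting from
\begin{equation*}
\p_{x}^{l}S(x,y,t) = \frac{1}{(2\pi)^{2}}\int_{\R^{2}}(i\xi)^{l} e^{ix\xi + iy\eta}\,e^{-\nu t\xi^{2} + it\xi^{3} - it\e\eta^{2}/\xi}\,d\eta\,d\xi,
\end{equation*}
I would fix $\xi \neq 0$ and evaluate the inner integral in $\eta$ using the standard Fresnel identity
\begin{equation*}
\int_{\R}e^{-ia\eta^{2}+ib\eta}\,d\eta = \sqrt{\pi/|a|}\,e^{-i\frac{\pi}{4}\operatorname{sgn}(a)}\,e^{ib^{2}/(4a)}, \quad a\in\R\setminus\{0\},\ b\in\R,
\end{equation*}
with $a = t\e/\xi$ and $b = y$. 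The non-absolutely convergent $\eta$-integral is handled in the usual way by inserting the regularizer $e^{-\delta\eta^{2}}$, applying Fubini, and letting $\delta\to 0^{+}$; the Gaussian $e^{-\nu t\xi^{2}}$ in the outer variable together with the uniform bound $\sqrt{\pi|\xi|/t}$ on the inner integral makes dominated convergence applicable.

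After this reduction, $\p_{x}^{l}S$ takes the form
\begin{equation*}
\p_{x}^{l}S(x,y,t) = \frac{c_{0}}{\sqrt{t}}\int_{\R}(i\xi)^{l}|\xi|^{1/2}\,e^{-i\frac{\pi}{4}\e\operatorname{sgn}(\xi)}\,e^{i\Phi(x,y,t,\xi)}\,e^{-\nu t\xi^{2}}\,d\xi,
\end{equation*}
where $c_{0}$ is an absolute constant and the real phase is $\Phi(x,y,t,\xi)=(x+\e y^{2}/(4t))\xi + t\xi^{3}$. Since $|e^{i\Phi}|=|e^{-i\frac{\pi}{4}\e\operatorname{sgn}(\xi)}|=1$, moving the absolute value inside the integral gives
\begin{equation*}
\bigl|\p_{x}^{l}S(x,y,t)\bigr|\le\frac{C}{\sqrt{t}}\int_{\R}|\xi|^{l+1/2}e^{-\nu t\xi^{2}}\,d\xi,
\end{equation*}
already uniform in $(x,y)$. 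The change of variables $\xi = t^{-1/2}\zeta$ converts the remaining integral into $t^{-(2l+3)/4}$ times the finite Gaussian moment $\int_{\R}|\zeta|^{l+1/2}e^{-\nu\zeta^{2}}d\zeta$, producing the claimed decay $Ct^{-5/4-l/2}$.

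No serious obstacle is anticipated in this argument. The only technical points to check carefully are (i) the justification of the Fresnel reduction via the $\delta$-regularization described above, and (ii) the integrability of the reduced one-dimensional integrand near $\xi=0$; since $l\ge 0$, the factor $|\xi|^{l+1/2}$ has at worst a mild integrable singularity there, so no additional cutoff near the origin is required. It is worth noting that because the bound only uses $|e^{i\Phi}|=1$, the dispersive term $it\xi^{3}$ plays no role here and the same argument would apply verbatim to the kernel $K$ of \eqref{w-asymp}.
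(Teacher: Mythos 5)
Your argument is correct and coincides with the paper's own proof (which likewise integrates out $\eta$ via the Fresnel identity to obtain $|\p_{x}^{l}S(x,y,t)|\le Ct^{-1/2}\int_{\R}|\xi|^{l+1/2}e^{-\nu t\xi^{2}}d\xi$ and then rescales, using $r=\nu t\xi^{2}$ rather than $\zeta=t^{1/2}\xi$); your extra care with the $\delta$-regularization and your closing remark that the bound discards the phase $it\xi^{3}$, so that the same estimate holds for $K$, are both consistent with the paper.
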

\begin{prop}\label{prop.L-decay-LKPB}
Let $s\ge0$ and suppose $u_{0} \in X^{s}(\R^{2})$. If there exists a non-negative integer $j$ such that $\p_{x}^{-1}\p_{y}^{j}u_{0}\in L^{1}(\R^{2})$, 
then for all non-negative integer $l$, we have  
\begin{equation}\label{L-decay-LKPB}
\left\| \p_{x}^{l}\p_{y}^{j}(S(t)*u_{0})(\cdot, \cdot) \right\|_{L^{\infty}} \le Ct^{-\frac{7}{4}-\frac{l}{2}}, \quad t>0. 
\end{equation}
\end{prop}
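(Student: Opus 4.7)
The target estimate improves the kernel bound of Proposition~\ref{prop.L-decay-S} by a factor of $t^{-1/2}$, and the only new hypothesis is that $\p_x^{-1}\p_y^j u_0 \in L^1(\R^2)$. The natural strategy is therefore to ``transfer'' one $x$-derivative from the solution $S(t)*u_0$ onto the kernel $S(t)$, so that the $L^1$-norm of $\p_x^{-1}\p_y^j u_0$ can be exploited through Young's convolution inequality.

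Concretely, the plan is as follows. First I would record the convolution identity
\begin{equation*}
\p_x^l \p_y^j (S(t)*u_0)(x,y) = \bigl(\p_x^{l+1}S(t)\bigr) * \bigl(\p_x^{-1}\p_y^j u_0\bigr)(x,y),
\end{equation*}
which is justified on the Fourier side: the symbol of the right-hand side is $(i\xi)^{l+1}\cdot (i\xi)^{-1}(i\eta)^j \hat S(\xi,\eta)\,\hat u_0(\xi,\eta)$, which coincides with $(i\xi)^l(i\eta)^j \hat S(\xi,\eta)\,\hat u_0(\xi,\eta)$. The embedding of $X^s(\R^2)$ into $\dot{\mathbb{H}}^{-m}_x(\R^2)$ ensures that $\p_x^{-1}\p_y^j u_0$ is well-defined as a tempered distribution, and the hypothesis $\p_x^{-1}\p_y^j u_0 \in L^1(\R^2)$ makes the convolution on the right-hand side an honest $L^1$--$L^\infty$ pairing.

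Next, I would apply Young's inequality to the identity above to obtain
\begin{equation*}
\left\| \p_x^l \p_y^j (S(t)*u_0) \right\|_{L^\infty} \le \left\| \p_x^{l+1} S(t) \right\|_{L^\infty} \left\| \p_x^{-1}\p_y^j u_0 \right\|_{L^1}.
\end{equation*}
Then Proposition~\ref{prop.L-decay-S} applied with $l$ replaced by $l+1$ gives
\begin{equation*}
\left\| \p_x^{l+1} S(t) \right\|_{L^\infty} \le C t^{-\frac{5}{4}-\frac{l+1}{2}} = C t^{-\frac{7}{4}-\frac{l}{2}}, \quad t>0,
\end{equation*}
and combining these two inequalities yields the desired bound \eqref{L-decay-LKPB}.

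There is no substantial obstacle: the argument is a one-line application of the anti-derivative trick together with Proposition~\ref{prop.L-decay-S}. The only point that requires a little care is the rigorous justification of the Fourier-side manipulation near $\xi=0$, which is taken care of by the definition of $X^s(\R^2)$ and the assumption $\p_x^{-1}\p_y^j u_0 \in L^1(\R^2)$; in particular, one does not need any decay of $u_0$ in the $y$-direction for this step.
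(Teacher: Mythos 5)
Your argument is correct: the identity $\p_x^l\p_y^j(S(t)*u_0)=(\p_x^{l+1}S(t))*(\p_x^{-1}\p_y^ju_0)$, Young's inequality, and Proposition~\ref{prop.L-decay-S} with $l$ replaced by $l+1$ give exactly $Ct^{-\frac{7}{4}-\frac{l}{2}}$. This is the same derivative-transfer argument the paper itself uses for the parallel statement about the kernel $K$ (Corollary~\ref{cor.L-decay-Lw}); for Proposition~\ref{prop.L-decay-LKPB} the paper simply cites Molinet, but your route is the intended one.
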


Next, we shall construct the asymptotic profile for the integral kernel $S(x, y, t)$. In order to state such a result, let us consider the following auxiliary Cauchy problem:
\begin{align}\label{w-linear}
\begin{split}
& \tilde{w}_{t} + \e \p_{x}^{-1}\tilde{w}_{yy} -\nu \tilde{w}_{xx} = 0, \quad (x, y) \in \R^{2}, \ t>0,\\
& \tilde{w}(x, y, 0) = u_{0}(x, y), \quad (x, y) \in \R^{2}. 
\end{split}
\end{align}
First, we would like to show that the integral kernel of the solution to the above problem \eqref{w-linear} is given by $K(x, y, t)$ which is defined in \eqref{DEF-K}. Now, it follows from the Fourier transform that 
\begin{align}
&\mathcal{F}^{-1} \left[ \frac{1}{2\pi} e^{ - \nu t \xi^{2} - it \e \frac{\eta^{2}}{\xi} } \right] (x, y)
 = \frac{1}{(2\pi)^{2}} \int_{\R}\int_{\R} e^{ -\nu t \xi^{2} - it\e \frac{\eta^{2}}{\xi}} e^{ ix\xi + iy\eta } d\xi d\eta  \nonumber \\
& = \frac{1}{(2\pi)^{ \frac{3}{2} }} \int_{\R} e^{ -\nu t \xi^{2} } \mathcal{F}^{-1}_{\eta} \left[ e^{-it\e \frac{\eta^{2}}{\xi}} \right](y) e^{ix\xi} d\xi 
= \frac{t^{ -\frac{1}{2} }}{ 4\pi^{ \frac{3}{2} }} \int_{\R} |\xi|^{ \frac{1}{2} } e^{ -\nu t \xi^{2} } e^{ \frac{i\xi y^{2}}{4t\e} - \frac{i\pi}{4}\e \mathrm{sgn} \xi } e^{ix\xi} d\xi. \label{re-DEF-K}
\end{align}
Here, we have used the following fact: 
\begin{align}
\mathcal{F}^{-1}_{\eta} \left[ e^{-it\e \frac{\eta^{2}}{\xi}} \right](y) 
& = \frac{1}{\sqrt{2\pi}} \int_{\R}e^{-it\e \frac{\eta^{2}}{\xi}} e^{iy\eta} d\eta 
= \frac{1}{\sqrt{2\pi}} e^{ \frac{i\xi y^{2}}{4t\e} } \int_{\R} e^{ -\frac{it\e}{\xi}\left( \eta - \frac{\xi y}{2t\e} \right)^{2} } d\eta \nonumber \\
& = \frac{1}{\sqrt{2\pi}}\left( \frac{\xi \pi}{it\e} \right)^{\frac{1}{2}} e^{ \frac{i\xi y^{2}}{4t\e} }
= \sqrt{\frac{|\xi|}{2t}} e^{ \frac{i\xi y^{2}}{4t\e} - \frac{i\pi}{4}\e \mathrm{sgn} \xi }. \label{fact}
\end{align}
Therefore, by the change of valuable, it follows from \eqref{re-DEF-K}, \eqref{DEF-K} and \eqref{DEF-K*} that 
\begin{align}
& \mathcal{F}^{-1} \left[ \frac{1}{2\pi} e^{ - \nu t \xi^{2} - it \e \frac{\eta^{2}}{\xi} } \right] (x, y) \nonumber \\
& = \frac{t^{ -\frac{1}{2} }}{ 4\pi^{ \frac{3}{2} }} \int_{\R} |\xi|^{ \frac{1}{2} } e^{ -\nu t \xi^{2} } e^{ \frac{i\xi y^{2}}{4t\e} - \frac{i\pi}{4}\e \mathrm{sgn} \xi } e^{ix\xi} d\xi \nonumber\\
& = \frac{t^{ -\frac{1}{2} } }{ 4\pi^{ \frac{3}{2} }} \left( \int_{0}^{\infty} |\xi|^{ \frac{1}{2} } e^{ -\nu t \xi^{2} } e^{ \frac{i\xi y^{2}}{4t\e} - \frac{i\pi}{4}\e} e^{ix\xi} d\xi 
+ \int_{-\infty}^{0} |\xi|^{ \frac{1}{2} } e^{ -\nu t \xi^{2} } e^{ \frac{i\xi y^{2}}{4t\e} + \frac{i\pi}{4}\e} e^{ix\xi} d\xi \right) \nonumber \\
& = \frac{  t^{ -\frac{5}{4} } }{ 8\pi^{ \frac{3}{2} } \nu^{\frac{3}{4}} } \left( \int_{0}^{\infty} r^{-\frac{1}{4} }e^{ -r } e^{ \frac{iy^{2}}{4\e}\sqrt{\frac{r}{\nu}}t^{-\frac{3}{2}} - \frac{i\pi}{4}\e +ix \sqrt{\frac{r}{\nu t}}} dr 
+ \int_{0}^{\infty} r^{-\frac{1}{4} }e^{ -r } e^{ -\frac{iy^{2}}{4\e}\sqrt{\frac{r}{\nu}}t^{-\frac{3}{2}} + \frac{i\pi}{4}\e - ix \sqrt{\frac{r}{\nu t}}} dr \right)  \nonumber \\
& = \frac{  t^{ -\frac{5}{4} } }{ 8\pi^{ \frac{3}{2} } \nu^{\frac{3}{4}}} \int_{0}^{\infty} r^{-\frac{1}{4} }e^{ -r } \left( e^{ \frac{iy^{2}}{4\e}\sqrt{\frac{r}{\nu}}t^{-\frac{3}{2}} - \frac{i\pi}{4}\e +ix \sqrt{\frac{r}{\nu t}}} + e^{ -\frac{iy^{2}}{4\e}\sqrt{\frac{r}{\nu}}t^{-\frac{3}{2}} +\frac{i\pi}{4}\e -ix \sqrt{\frac{r}{\nu t}}}  \right) dr \nonumber \\
& = \frac{  t^{ -\frac{5}{4} } }{ 4\pi^{ \frac{3}{2} } \nu^{\frac{3}{4}}} \int_{0}^{\infty} r^{-\frac{1}{4} }e^{ -r } \cos \left( x \sqrt{\frac{r}{\nu t}} + \frac{y^{2}}{4\e}\sqrt{\frac{r}{\nu}}t^{-\frac{3}{2}} - \frac{\pi}{4}\e \right) dr \nonumber \\
& =  t^{ -\frac{5}{4} } K_{*} \left( xt^{-\frac{1}{2}}, yt^{-\frac{3}{4}} \right) \nonumber \\
& = K(x, y, t). \label{ap-derivation}
\end{align}
Thus, we can see that $K(x, y, t)$ is the integral kernel of the solution to the Cauchy problem \eqref{w-linear}, and then the solution $\tilde{w}(x, y, t)$ can be written by 
\[\tilde{w}(x, y, t)=(K(t)*u_{0})(x, y), \ \ (x, y) \in \R^{2}, \ t>0. \] 
Moreover, for the kernel $K(x, y, t)$ and the solution $\tilde{w}(x, y, t)$ to \eqref{w-linear}, we are able to obtain the following $L^{\infty}$-decay estimates similar to Propositions~\ref{prop.L-decay-S} and \ref{prop.L-decay-LKPB}: 
\begin{prop}\label{prop.L-decay-K}
Let $l$ be a non-negative integer. Then, we have 
\begin{equation}\label{L-decay-K}
\left\| \p_{x}^{l} K(\cdot, \cdot, t) \right\|_{L^{\infty}} \le C t^{ -\frac{5}{4} -\frac{l}{2} }, \ \ t>0. 
\end{equation}
\end{prop}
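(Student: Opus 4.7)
The plan is to mimic the proof of Proposition~\ref{prop.L-decay-S} essentially verbatim, since the only difference between $S(x,y,t)$ and $K(x,y,t)$ in the Fourier representation is the dispersive phase factor $e^{it\xi^{3}}$, which has modulus $1$ and therefore disappears the moment one estimates in absolute value. Concretely, starting from the Fourier representation derived in \eqref{re-DEF-K} together with the identity \eqref{fact}, differentiating $l$ times in $x$ under the integral sign yields
\begin{equation*}
\p_{x}^{l}K(x,y,t)=\frac{t^{-\frac{1}{2}}}{4\pi^{\frac{3}{2}}}\int_{\R}(i\xi)^{l}|\xi|^{\frac{1}{2}}e^{-\nu t\xi^{2}}e^{\frac{i\xi y^{2}}{4t\e}-\frac{i\pi}{4}\e\,\mathrm{sgn}\,\xi}e^{ix\xi}\,d\xi,
\end{equation*}
and this representation is justified by the absolute convergence of the integrand after the Gaussian factor is introduced.

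Next, I would take absolute values inside, using $|e^{i\theta}|=1$ for every real $\theta$, to obtain
\begin{equation*}
\bigl|\p_{x}^{l}K(x,y,t)\bigr|\le\frac{t^{-\frac{1}{2}}}{4\pi^{\frac{3}{2}}}\int_{\R}|\xi|^{l+\frac{1}{2}}e^{-\nu t\xi^{2}}d\xi=\frac{t^{-\frac{1}{2}}}{2\pi^{\frac{3}{2}}}\int_{0}^{\infty}\xi^{l+\frac{1}{2}}e^{-\nu t\xi^{2}}d\xi.
\end{equation*}
The substitution $r=\nu t\xi^{2}$ converts this into a Gamma integral, producing
\begin{equation*}
\bigl|\p_{x}^{l}K(x,y,t)\bigr|\le\frac{\Gamma\!\left(\tfrac{l}{2}+\tfrac{3}{4}\right)}{4\pi^{\frac{3}{2}}\nu^{\frac{l}{2}+\frac{3}{4}}}\,t^{-\frac{5}{4}-\frac{l}{2}},
\end{equation*}
uniformly in $(x,y)\in\R^{2}$, which is the desired bound.

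There is really no main obstacle here: the estimate is sharp up to constants and is uniform in $(x,y)$ precisely because we throw away the oscillatory phase. As a sanity check, one could instead exploit the scaling identity $K(x,y,t)=t^{-\frac{5}{4}}K_{*}(xt^{-\frac{1}{2}},yt^{-\frac{3}{4}})$, which gives $\p_{x}^{l}K(x,y,t)=t^{-\frac{5}{4}-\frac{l}{2}}(\p_{x}^{l}K_{*})(xt^{-\frac{1}{2}},yt^{-\frac{3}{4}})$; differentiating the defining integral \eqref{DEF-K*} brings down $(r/\nu)^{l/2}$ from the cosine argument, and bounding cosine/sine by $1$ yields the same Gamma integral $\int_{0}^{\infty}r^{\frac{l}{2}-\frac{1}{4}}e^{-r}dr$ as above, confirming that $\|\p_{x}^{l}K_{*}\|_{L^{\infty}}<\infty$. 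Either route delivers the estimate with an explicit constant, so the proof is short and self-contained.
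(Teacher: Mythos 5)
Your proof is correct and is essentially the paper's argument: the paper bounds $|\p_x^l K|$ via the self-similar form $\p_x^l K(x,y,t)=t^{-\frac{5}{4}-\frac{l}{2}}(\p_x^l K_*)(xt^{-\frac{1}{2}},yt^{-\frac{3}{4}})$ and the Gamma integral $\int_0^\infty r^{\frac{l}{2}-\frac{1}{4}}e^{-r}dr$, which is exactly your ``sanity check'' route, and your primary Fourier-side computation is the same estimate before the change of variables $r=\nu t\xi^2$. Both give the identical constant $\Gamma\!\left(\tfrac{l}{2}+\tfrac{3}{4}\right)/\bigl(4\pi^{\frac{3}{2}}\nu^{\frac{l}{2}+\frac{3}{4}}\bigr)$, so there is nothing to add.
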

\begin{proof}
From the definitions of $K(x, y, t)$ and $K_{*}(x, y)$ (i.e. \eqref{DEF-K} and \eqref{DEF-K*}, respectively), we have 
\begin{align}
& \left|\p_{x}^{l}K(x, y, t)\right| 
= t^{-\frac{5}{4}-\frac{l}{2}} \left|\p_{x}^{l} K_{*} \left( xt^{-\frac{1}{2}}, yt^{-\frac{3}{4}} \right) \right|  \nonumber \\
& \le \frac{t^{-\frac{5}{4}-\frac{l}{2}}}{4\pi^{\frac{3}{2}} \nu^{\frac{3}{4}+\frac{l}{2}}}\int_{0}^{\infty} r^{ \frac{l}{2}-\frac{1}{4} }e^{-r} dr 
= \frac{ \Gamma \left( \frac{l}{2} + \frac{3}{4} \right) }{4\pi^{ \frac{3}{2} }\nu^{\frac{3}{4}+\frac{l}{2}}} t^{ -\frac{5}{4} -\frac{l}{2} }, \ \ (x, y) \in \R^{2}, \ t>0. \label{L-ap-est-pre}
\end{align}
This completes the proof of \eqref{L-decay-K}. 
\end{proof}
\begin{cor}\label{cor.L-decay-Lw}
Let $s\ge0$ and suppose $u_{0} \in X^{s}(\R^{2})$. If there exists a non-negative integer $j$ such that $\p_{x}^{-1}\p_{y}^{j}u_{0}\in L^{1}(\R^{2})$, 
then for all non-negative integer $l$, we have  
\begin{equation}\label{L-decay-Lw}
\left\| \p_{x}^{l}\p_{y}^{j}(K(t)*u_{0})(\cdot, \cdot) \right\|_{L^{\infty}} \le Ct^{-\frac{7}{4}-\frac{l}{2}}, \quad t>0. 
\end{equation}
\end{cor}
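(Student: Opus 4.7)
The strategy is to transfer one $x$-derivative from $u_{0}$ onto the kernel $K$, so that the extra factor of $\partial_{x}$ upgrades the $L^{\infty}$-decay of $K(t)$ from $t^{-5/4}$ (Proposition~\ref{prop.L-decay-K}) to $t^{-7/4}$, after which Young's inequality ($L^{\infty}\ast L^{1}\to L^{\infty}$) finishes the proof.

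More precisely, since $u_{0}\in X^{s}(\R^{2})$ is embedded in $\dot{\mathbb{H}}^{-m}_{x}(\R^{2})$, the anti-derivative $\partial_{x}^{-1}\partial_{y}^{j}u_{0}$ is well defined via \eqref{anti-derivative}, and by the assumption it lies in $L^{1}(\R^{2})$. On the Fourier side the identity $\widehat{\partial_{y}^{j}u_{0}}(\xi,\eta)=(i\xi)\cdot(i\xi)^{-1}(i\eta)^{j}\hat{u}_{0}(\xi,\eta)$ gives $\partial_{y}^{j}u_{0}=\partial_{x}\bigl(\partial_{x}^{-1}\partial_{y}^{j}u_{0}\bigr)$, hence
\begin{equation*}
\partial_{x}^{l}\partial_{y}^{j}\bigl(K(t)\ast u_{0}\bigr)
= \partial_{x}^{l}K(t)\ast \partial_{y}^{j}u_{0}
= \partial_{x}^{l+1}K(t)\ast \bigl(\partial_{x}^{-1}\partial_{y}^{j}u_{0}\bigr).
\end{equation*}

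Then Young's convolution inequality together with Proposition~\ref{prop.L-decay-K} applied to $\partial_{x}^{l+1}K$ yields
\begin{equation*}
\bigl\|\partial_{x}^{l}\partial_{y}^{j}\bigl(K(t)\ast u_{0}\bigr)\bigr\|_{L^{\infty}}
\le \bigl\|\partial_{x}^{l+1}K(t)\bigr\|_{L^{\infty}}\bigl\|\partial_{x}^{-1}\partial_{y}^{j}u_{0}\bigr\|_{L^{1}}
\le C\,t^{-\frac{5}{4}-\frac{l+1}{2}}\bigl\|\partial_{x}^{-1}\partial_{y}^{j}u_{0}\bigr\|_{L^{1}}
= C\,t^{-\frac{7}{4}-\frac{l}{2}},
\end{equation*}
which is exactly \eqref{L-decay-Lw}.

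There is no real obstacle beyond justifying the derivative-shifting identity; this is routine once one recognizes that $X^{s}(\R^{2})\hookrightarrow \dot{\mathbb{H}}^{-m}_{x}(\R^{2})$ so that $\partial_{x}^{-1}\partial_{y}^{j}u_{0}$ makes sense as a tempered distribution, combined with the hypothesis that it is integrable. The only step one might worry about is the convergence of the convolution $\partial_{x}^{l+1}K(t)\ast \bigl(\partial_{x}^{-1}\partial_{y}^{j}u_{0}\bigr)$, but this is immediate from Young's inequality since $\partial_{x}^{l+1}K(t)\in L^{\infty}(\R^{2})$ by Proposition~\ref{prop.L-decay-K} and $\partial_{x}^{-1}\partial_{y}^{j}u_{0}\in L^{1}(\R^{2})$ by assumption.
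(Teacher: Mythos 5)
Your proposal is correct and follows exactly the paper's own argument: rewrite $\p_{x}^{l}\p_{y}^{j}(K(t)*u_{0})$ as $\p_{x}^{l+1}K(t)*\p_{x}^{-1}\p_{y}^{j}u_{0}$ via the Fourier transform, then apply Young's inequality together with Proposition~\ref{prop.L-decay-K}. No differences worth noting.
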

\begin{proof}
Since we can check that $\p_{x}^{l}\p_{y}^{j}(K(t)*u_{0})(x, y) = (\p_{x}^{l+1}K(t)*\p_{x}^{-1}\p_{y}^{j}u_{0})(x, y)$ is true through the Fourier transform, the desired estimate \eqref{L-decay-Lw} directly follows from the Young inequality. 
\end{proof}

Furthermore, we can prove that $S(x, y, t)$ is well approximated by $K(x, y, t)$ as follows:  
\begin{prop}\label{prop.L-ap-S}
Let $l$ be a non-negative integer. Then, we have 
\begin{equation}\label{L-ap-S}
\left\| \p_{x}^{l} \left( S(\cdot, \cdot, t) - K(\cdot, \cdot, t) \right) \right\|_{L^{\infty}} \le C t^{ -\frac{7}{4} -\frac{l}{2} }, \ \ t>0, 
\end{equation}
where $S(x, y, t)$ and $K(x, y, t)$ are defined by \eqref{DEF-S} and \eqref{DEF-K}, respectively. 
\end{prop}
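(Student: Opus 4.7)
The plan is to imitate verbatim the derivation of $K(x,y,t)$ carried out in equations \eqref{re-DEF-K}--\eqref{ap-derivation}, but now applied to the Fourier symbol of $S-K$ instead of that of $K$. Observing that the Fourier symbols differ only by a factor of $e^{it\xi^{3}}$, i.e.
\begin{equation*}
\mathcal{F}\bigl[S(\cdot,\cdot,t)-K(\cdot,\cdot,t)\bigr](\xi,\eta)=\frac{1}{2\pi}e^{-\nu t\xi^{2}-it\e\frac{\eta^{2}}{\xi}}\bigl(e^{it\xi^{3}}-1\bigr),
\end{equation*}
the factor in parentheses does not depend on $\eta$. So the $\eta$-integration can be performed using exactly the same computation as in \eqref{fact}, giving the extra factor $\sqrt{|\xi|/(2t)}\,e^{i\xi y^{2}/(4t\e)-i\pi\e\,\mathrm{sgn}\,\xi/4}$. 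This reduces the problem to a single one-dimensional oscillatory integral in $\xi$.

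After performing the $\eta$-integration and differentiating $l$ times in $x$, I obtain the representation
\begin{equation*}
\p_{x}^{l}\bigl(S-K\bigr)(x,y,t)=\frac{t^{-\frac{1}{2}}}{4\pi^{\frac{3}{2}}}\int_{\R}(i\xi)^{l}|\xi|^{\frac{1}{2}}e^{-\nu t\xi^{2}}\bigl(e^{it\xi^{3}}-1\bigr)e^{\frac{i\xi y^{2}}{4t\e}-\frac{i\pi}{4}\e\,\mathrm{sgn}\,\xi}e^{ix\xi}\,d\xi.
\end{equation*}
The pointwise bound then reduces to extracting an extra factor from $(e^{it\xi^{3}}-1)$. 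The key elementary inequality is $|e^{it\xi^{3}}-1|\le t|\xi|^{3}$, which produces exactly the two extra powers of $\xi$ that the Gaussian weight $e^{-\nu t\xi^{2}}$ will convert into the desired extra factor $t^{-1}$ in the final estimate.

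Applying the triangle inequality and this bound gives
\begin{equation*}
\bigl|\p_{x}^{l}\bigl(S-K\bigr)(x,y,t)\bigr|\le \frac{t^{\frac{1}{2}}}{2\pi^{\frac{3}{2}}}\int_{0}^{\infty}\xi^{l+\frac{7}{2}}e^{-\nu t\xi^{2}}\,d\xi,
\end{equation*}
and the change of variable $r=\nu t\xi^{2}$ evaluates the right-hand side as a constant multiple of $\Gamma\!\left(\frac{l}{2}+\frac{9}{4}\right)t^{-\frac{7}{4}-\frac{l}{2}}$, which is the claimed estimate. The computation is parallel to \eqref{L-ap-est-pre} with the exponent shifted by $2$ (equivalently, three extra powers of $\xi$ absorbed into the Gaussian along with one extra power of $t$ from $|e^{it\xi^{3}}-1|\le t|\xi|^{3}$).

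There is no real obstacle here: the argument is a routine modification of the proof of Proposition~\ref{prop.L-decay-K}. The only point worth being careful about is to make sure that the $\eta$-integral is computed before invoking the bound on $e^{it\xi^{3}}-1$, so that the favorable Gaussian decay in $\xi$ is preserved and one can afford the extra three powers of $\xi$ paid for the gain of $t^{-1}$.
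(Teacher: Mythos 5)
Your proposal is correct and follows essentially the same route as the paper: the paper writes $e^{it\xi^{3}}=1+it\xi^{3}e^{i\theta t\xi^{3}}$ via the mean value theorem, which is exactly your elementary bound $|e^{it\xi^{3}}-1|\le t|\xi|^{3}$, and then estimates the resulting remainder integral $\frac{t^{1/2}}{2\pi^{3/2}}\int_{0}^{\infty}\xi^{l+7/2}e^{-\nu t\xi^{2}}d\xi$ by the same change of variables to get $\Gamma\left(\frac{l}{2}+\frac{9}{4}\right)t^{-\frac{7}{4}-\frac{l}{2}}$ up to a constant. Your remark about performing the $\eta$-integration first so as to retain the Gaussian decay in $\xi$ matches the paper's order of operations as well.
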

\begin{proof}
From the mean value theorem, there exists $\theta = \theta(\xi, t)\in (0, 1)$ such that $e^{ it\xi^{3} } = 1 + it\xi^{3} e^{ i\theta t\xi^{3} }$.
Therefore, in the same way to get \eqref{re-DEF-K}, it follows from \eqref{DEF-S} and \eqref{fact} that 
\begin{align*}
S(x, y, t) 
& = \frac{1}{(2\pi)^{2}} \int_{\R}\int_{\R} e^{ -\nu t \xi^{2} + it\left(\xi^{3} - \e \frac{\eta^{2}}{\xi} \right)} e^{ ix\xi + iy\eta } d\xi d\eta  \nonumber \\
& = \frac{1}{(2\pi)^{ \frac{3}{2} }} \int_{\R} e^{ -\nu t \xi^{2} + it\xi^{3} } \mathcal{F}^{-1}_{\eta} \left[ e^{-it\e \frac{\eta^{2}}{\xi}} \right](y) e^{ix\xi} d\xi \nonumber \\
&= \frac{t^{ -\frac{1}{2} }}{ 4\pi^{ \frac{3}{2} }} \int_{\R} |\xi|^{ \frac{1}{2} } e^{ -\nu t \xi^{2} + it\xi^{3} } e^{ \frac{i\xi y^{2}}{4t\e} - \frac{i\pi}{4}\e \mathrm{sgn} \xi } e^{ix\xi} d\xi \nonumber \\
& = \frac{t^{ -\frac{1}{2} }}{ 4\pi^{ \frac{3}{2} }} \int_{\R} |\xi|^{ \frac{1}{2} } e^{ -\nu t \xi^{2} } e^{ \frac{i\xi y^{2}}{4t\e} - \frac{i\pi}{4}\e \mathrm{sgn} \xi } e^{ix\xi} d\xi + \frac{i t^{ \frac{1}{2} }}{ 4\pi^{ \frac{3}{2} }} \int_{\R} \xi^{3} |\xi|^{ \frac{1}{2} } e^{ -\nu t \xi^{2} + i\theta t\xi^{3} } e^{ \frac{i\xi y^{2}}{4t\e} - \frac{i\pi}{4}\e \mathrm{sgn} \xi } e^{ix\xi} d\xi. 
\end{align*}
Then, recalling \eqref{ap-derivation}, it says that
\begin{equation}\label{rere-DEF-S}
S(x,y,t)-K(x,y,t)=\frac{i t^{ \frac{1}{2} }}{ 4\pi^{ \frac{3}{2} }} \int_{\R} \xi^{3} |\xi|^{ \frac{1}{2} } e^{ -\nu t \xi^{2} + i\theta t\xi^{3} } e^{ \frac{i\xi y^{2}}{4t\e} - \frac{i\pi}{4}\e \mathrm{sgn} \xi } e^{ix\xi} d\xi =:R(x,y,t). 
\end{equation}
Similarly as \eqref{L-ap-est-pre}, we can evaluate the remainder term $R(x, y, t)$ as follows: 
\begin{align}
\left|\p_{x}^{l}R(x, y, t)\right| 
& \le \frac{t^{ \frac{1}{2} }}{ 4\pi^{ \frac{3}{2} }} \int_{\R} |\xi|^{ l+\frac{7}{2} } e^{ -\nu t \xi^{2} } d\xi 
= \frac{t^{ \frac{1}{2} }}{ 2\pi^{ \frac{3}{2} }} \int_{0}^{\infty} \xi^{ l+\frac{7}{2} } e^{ -\nu t \xi^{2} } d\xi  \nonumber \\
& = \frac{  t^{ -\frac{7}{4} -\frac{l}{2} } }{ 4\pi^{ \frac{3}{2} }\nu^{\frac{l}{2} + \frac{9}{4}} } \int_{0}^{\infty} r^{ \frac{l}{2}+\frac{5}{4} }e^{ -r } dr 
= \frac{ \Gamma \left( \frac{l}{2} + \frac{9}{4} \right) }{4\pi^{ \frac{3}{2} }\nu^{\frac{l}{2} + \frac{9}{4}}} t^{ -\frac{7}{4} -\frac{l}{2} }, \ \ (x, y) \in \R^{2}, \ t>0. \label{S-R-est}
\end{align}
Thus, combining \eqref{rere-DEF-S} and \eqref{S-R-est}, we finally arrive at the desired estimate \eqref{L-ap-S}. 
\end{proof}

\begin{rem}
{\rm 
From the definition \eqref{DEF-K*} of $K_{*}(x, y)$, we can easily check $\p_{x}^{l}K_{*} \in L^{\infty}(\R^{2})$ for all non-negative integer $l$. 
Therefore, by virtue of the asymptotic formula \eqref{L-ap-S}, we obtain 
\[
\left\| \p_{x}^{l}S(\cdot, \cdot, t) \right\|_{L^{\infty}} \ge \left\| \p_{x}^{l}K_{*} \right\|_{L^{\infty}}\, t^{ -\frac{5}{4} -\frac{l}{2} }-Ct^{ -\frac{7}{4}-\frac{l}{2} }, \quad t>0.
\]
This inequality means the $L^{\infty}$-estimate \eqref{L-decay-S} is optimal with respect to the time decaying order. 
}
\end{rem}

In addition to Proposition~\ref{prop.L-ap-S}, similarly as Corollary~\ref{cor.L-decay-Lw}, we can obtain the following formula:  
\begin{cor}\label{cor.L-decay-LKPB-ap}
Let $s\ge0$ and suppose $u_{0} \in X^{s}(\R^{2})$. If there exists a non-negative integer $j$ such that $\p_{x}^{-1}\p_{y}^{j}u_{0}\in L^{1}(\R^{2})$, 
then for all non-negative integer $l$, we have  
\begin{equation}\label{L-decay-LKPB-ap}
\left\| \p_{x}^{l}\p_{y}^{j}\left((S-K)(t)*u_{0}\right)(\cdot, \cdot) \right\|_{L^{\infty}} \le Ct^{-\frac{9}{4}-\frac{l}{2}}, \quad t>0, 
\end{equation}
where $S(x, y, t)$ and $K(x, y, t)$ are defined by \eqref{DEF-S} and \eqref{DEF-K}, respectively. 
\end{cor}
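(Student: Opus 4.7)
The plan is to mirror the proof of Corollary~\ref{cor.L-decay-Lw} exactly, but with the improved pointwise kernel bound from Proposition~\ref{prop.L-ap-S} in place of Proposition~\ref{prop.L-decay-K}. First, I would record the Fourier-multiplier identity
\[
\p_x^l \p_y^j\bigl((S-K)(t) * u_0\bigr)(x,y) = \bigl(\p_x^{l+1}(S-K)(t) * \p_x^{-1}\p_y^j u_0\bigr)(x,y),
\]
which on the Fourier side amounts to rewriting the symbol $(i\xi)^l (i\eta)^j$ as $(i\xi)^{l+1}\cdot (i\xi)^{-1}(i\eta)^j$ and shifting one $x$-derivative from the data onto the kernel. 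The standing hypothesis $\p_x^{-1}\p_y^j u_0 \in L^{1}(\R^{2})$ ensures that the right-hand side is a bona fide convolution between an $L^{\infty}$ function and an $L^{1}$ function.

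Next, applying Young's convolution inequality $\|f*g\|_{L^{\infty}} \le \|f\|_{L^{\infty}}\|g\|_{L^{1}}$ gives
\[
\bigl\| \p_x^l \p_y^j\bigl((S-K)(t) * u_0\bigr) \bigr\|_{L^{\infty}} \le \bigl\| \p_x^{l+1}(S-K)(t) \bigr\|_{L^{\infty}} \bigl\| \p_x^{-1}\p_y^j u_0 \bigr\|_{L^{1}}.
\]
I would then invoke \eqref{L-ap-S} with $l$ replaced by $l+1$, which yields
\[
\bigl\|\p_x^{l+1}(S-K)(t)\bigr\|_{L^{\infty}} \le C t^{-\frac{7}{4} - \frac{l+1}{2}} = C t^{-\frac{9}{4} - \frac{l}{2}}.
\]
Combining these two displays with the finiteness of $\|\p_{x}^{-1}\p_{y}^{j}u_{0}\|_{L^{1}}$ delivers \eqref{L-decay-LKPB-ap}.

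There is no real obstacle: essentially all of the analytical work has already been carried out at the kernel level in Proposition~\ref{prop.L-ap-S}, and this corollary just propagates that bound through a convolution via the anti-derivative trick. The one minor point worth verifying is the multiplier identity in the first paragraph; since $\p_{x}^{-1}\p_{y}^{j}u_{0} \in L^{1}(\R^{2})$ its Fourier transform is continuous and bounded, while the symbol of $\p_{x}^{l+1}(S-K)(t)$ is smooth and decays rapidly in $\xi$ thanks to the Gaussian factor $e^{-\nu t \xi^{2}}$, so multiplication and inversion of the Fourier transform are justified by Fubini, making the identity rigorous.
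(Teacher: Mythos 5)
Your proposal is correct and follows exactly the route the paper intends: the paper states this corollary follows "similarly as Corollary~\ref{cor.L-decay-Lw}", i.e.\ by the identity $\p_{x}^{l}\p_{y}^{j}\bigl((S-K)(t)*u_{0}\bigr)=\p_{x}^{l+1}(S-K)(t)*\p_{x}^{-1}\p_{y}^{j}u_{0}$ followed by Young's inequality and the kernel bound \eqref{L-ap-S} with $l$ replaced by $l+1$. Nothing is missing.
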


In the above, we have seen that the solution $\tilde{u}(x, y, t)$ to \eqref{LKPB} can be approximated by the solution $\tilde{w}(x, y, t)$ to \eqref{w-linear}. Finally in this section, we shall prove the more detailed asymptotic formula for the solution to \eqref{w-linear}. In order to give that result, we define the following new functions: 
\begin{align}
& \mathcal{K}_{j}(x, y, t) := \int_{\R} \p_{x} K(x, y-w, t)  M_{j}(w) dw, \quad (x, y)\in \R^{2}, \ \ t>0,  \label{DEF-mathK}\\
& M_{j}(y) := \int_{\R} \p_{x}^{-1}\p_{y}^{j}u_{0}(x, y) dx, \quad \p_{x}^{-1}\p_{y}^{j}u_{0}(\cdot, y)\in L^{1}(\R), \ \mathrm{a.e.} \ y\in \R, \ j \in \mathbb{N}\cup \{0\}, \label{M(y)}
\end{align}
where $K(x, y, t)$ is defined by \eqref{DEF-K}. Then, the leading term of the solution $\tilde{w}(x, y, t)$ to \eqref{w-linear} is given by $\mathcal{K}_{j}(x, y, t)$, if the initial data $u_{0}$ satisfies $\p_{x}^{-1}\p_{y}^{j}u_{0} \in L^{1}(\R^{2})$ for some $j\in \mathbb{N}\cup \{0\}$. More precisely, the following asymptotic formula can be established: 
\begin{thm}\label{thm.L-ap-LKPB}
Let $s\ge0$ and suppose $u_{0} \in X^{s}(\R^{2})$. If there exists a non-negative integer $j$ such that $\p_{x}^{-1}\p_{y}^{j}u_{0}\in L^{1}(\R^{2})$, 
then for all non-negative integer $l$, we have  
\begin{equation}\label{L-ap-LKPB}
\lim_{t \to \infty} t^{ \frac{7}{4}+\frac{l}{2} } \left\| \p_{x}^{l}\p_{y}^{j}(K(t)*u_{0})(\cdot, \cdot) - \p_{x}^{l}\mathcal{K}_{j}(\cdot, \cdot, t) \right\|_{L^{\infty}} = 0,
\end{equation}
where $K(x, y, t)$ and $\mathcal{K}_{j}(x, y, t)$ are defined by \eqref{DEF-K} and \eqref{DEF-mathK}, respectively. 
\end{thm}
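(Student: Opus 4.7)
The plan is to reduce the claim to an approximate-identity argument that exploits the anisotropic self-similar scaling of $K$. Setting $f := \p_x^{-1}\p_y^j u_0 \in L^1(\R^2)$, the Fourier identity used in the proof of Corollary~\ref{cor.L-decay-Lw} gives
\[
\p_x^l \p_y^j (K(t)*u_0)(x,y) = \int_{\R^2} \p_x^{l+1} K(x-a, y-b, t)\, f(a, b)\, da\, db,
\]
while $M_j(b) = \int_\R f(a,b)\, da$ together with Fubini yield
\[
\p_x^l \mathcal{K}_j(x,y,t) = \int_{\R^2} \p_x^{l+1} K(x, y-b, t)\, f(a, b)\, da\, db.
\]
Consequently, the quantity to be estimated is the standard remainder
\[
D(x,y,t) := \int_{\R^2} \bigl[\p_x^{l+1} K(x-a,y-b,t) - \p_x^{l+1} K(x,y-b,t)\bigr] f(a,b)\, da\, db
\]
obtained by replacing $f(a,b)$ with $\delta_0(a)\,M_j(b)$ in the convolution.

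Next I would invoke the scaling $K(x,y,t) = t^{-5/4} K_*(xt^{-1/2}, yt^{-3/4})$, which gives $\p_x^{l+1} K(x,y,t) = t^{-5/4-(l+1)/2} (\p_x^{l+1} K_*)(xt^{-1/2}, yt^{-3/4})$. Since $\tfrac{7}{4}+\tfrac{l}{2}-\tfrac{5}{4}-\tfrac{l+1}{2}=0$, writing $X := xt^{-1/2}$ and $Y_b := (y-b)t^{-3/4}$ leads to the key identity
\[
t^{\frac{7}{4}+\frac{l}{2}} D(x,y,t) = \int_{\R^2} \bigl[\p_x^{l+1}K_*(X-at^{-1/2}, Y_b) - \p_x^{l+1}K_*(X, Y_b)\bigr] f(a,b)\, da\, db.
\]
All $t$-decay has now been absorbed, and the task is to show that the shift $at^{-1/2}\to 0$ produces a vanishing integral \emph{uniformly in} $(x,y)$; this is the main obstacle, since mere pointwise dominated convergence would not yield an $L^\infty$-bound.

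To overcome this, I would use that the same Gamma-function computation as in the proof of Proposition~\ref{prop.L-decay-K} gives $\|\p_x^n K_*\|_{L^\infty} < \infty$ for every $n \ge 0$, so $\p_x^{l+1} K_*$ is globally Lipschitz in its first argument with constant $L := \|\p_x^{l+2}K_*\|_{L^\infty}$. Given $\delta > 0$, choose $R > 0$ such that $\int_{|a|>R,\, b\in\R} |f(a,b)|\, da\, db < \delta$, which is possible because $f \in L^1(\R^2)$, and split the $a$-integral at $|a|\le R$ and $|a|>R$. On the former set the bracket is bounded by $L|a|t^{-1/2} \le LRt^{-1/2}$ \emph{uniformly in} $(X, Y_b)$, so its contribution to $t^{\frac{7}{4}+\frac{l}{2}}\|D(\cdot,\cdot,t)\|_{L^\infty}$ is at most $LR t^{-1/2}\|f\|_{L^1} \to 0$ as $t \to \infty$. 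On the latter set the bracket is bounded by $2\|\p_x^{l+1}K_*\|_{L^\infty}$, so its contribution is at most $2\|\p_x^{l+1}K_*\|_{L^\infty}\delta$. Letting $t\to\infty$ and then $\delta\to 0$ establishes \eqref{L-ap-LKPB}.
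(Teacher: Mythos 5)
Your argument is correct and is essentially the paper's own proof: both split the $L^{1}$ mass of $\p_{x}^{-1}\p_{y}^{j}u_{0}$ into a bounded region plus a small tail, use the mean value theorem (your global Lipschitz bound via $\|\p_{x}^{l+2}K_{*}\|_{L^{\infty}}$ is exactly the paper's $J_{1}$ estimate via Proposition~\ref{prop.L-decay-K}) to gain the extra $t^{-1/2}$ on the bounded region, and bound the tail by the uniform sup-norm of $\p_{x}^{l+1}K$. The only differences are cosmetic: you pass to self-similar variables before estimating and truncate only in the $a$-variable, whereas the paper keeps the $t$-powers explicit and truncates in $|(z,w)|$.
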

\begin{proof}
Under the above assumptions, in order to show \eqref{L-ap-LKPB}, it is sufficient to prove 
\begin{equation}\label{K-ap}
\lim_{t \to \infty} t^{ \frac{7}{4}+\frac{l}{2} } \left\| (\p_{x}^{l+1}K(t)*\p_{x}^{-1}\p_{y}^{j}u_{0})(\cdot, \cdot) - \p_{x}^{l}\mathcal{K}_{j}(\cdot, \cdot, t) \right\|_{L^{\infty}} = 0.
\end{equation}
First, since $\p_{x}^{-1}\p_{y}^{j}u_{0}\in L^{1}(\R^{2})$, for any $\e_{0}>0$, there exists a constant $L=L(\e_{0})>0$ such that 
\begin{equation}\label{L1-far}
\int_{|(x, y)| \ge L} \left|\p_{x}^{-1}\p_{y}^{j}u_{0}(x, y)\right| dxdy < \e_{0}.
\end{equation}
Therefore, from \eqref{DEF-mathK} and \eqref{M(y)}, splitting the integral and applying the mean value theorem, there exists $\theta=\theta(x, z)\in (0, 1)$ such that  
\begin{align}
&(\p_{x}^{l+1}K(t)*\p_{x}^{-1}\p_{y}^{j}u_{0})(x, y) -\p_{x}^{l}\mathcal{K}_{j}(x, y, t) \nonumber \\
& = \int_{\R^{2}} \p_{x}^{l+1}K(x-z, y-w, t)\p_{z}^{-1}\p_{w}^{j}u_{0}(z, w)dzdw - \int_{\R}\p_{x}^{l+1}K(x, y-w, t) M_{j}(w) dw \nonumber \\
& = \left( \int_{|(z, w)| \le L}+\int_{|(z, w)| \ge L} \right)\p_{x}^{l+1}\left( K(x-z, y-w, t)-K(x, y-w, t) \right)\p_{z}^{-1}\p_{w}^{j}u_{0}(z, w)dzdw \nonumber \\
& = \int_{|(z, w)| \le L} (-z)\p_{x}^{l+2}K(x-\theta z, y-w, t)\p_{z}^{-1}\p_{w}^{j}u_{0}(z, w)dzdw \nonumber \\
&\ \ \ \ + \int_{|(z, w)| \ge L} \p_{x}^{l+1}\left( K(x-z, y-w, t)-K(x, y-w, t)\right)\p_{z}^{-1}\p_{w}^{j}u_{0}(z, w)dzdw \nonumber \\
& =: J_{1}(x, y, t) + J_{2}(x, y, t). \label{L-ap-split}
\end{align}

Next, we shall evaluate $J_{1}(x, y, t)$ and $J_{2}(x, y, t)$. From \eqref{L-ap-split} and Proposition~\ref{prop.L-decay-K}, we have 
\begin{align}
\left| J_{1}(x, y, t) \right| 
& \le \int_{|(z, w)| \le L} |z|\,\left|\p_{x}^{l+2}K(x-\theta z, y-w, t)\right|\,\left|\p_{z}^{-1}\p_{w}^{j}u_{0}(z, w)\right|dzdw \nonumber \\
& \le Ct^{ -\frac{5}{4} -\frac{l+2}{2} } \int_{|(z, w)| \le L} |(z, w)|\,\left|\p_{z}^{-1}\p_{w}^{j}u_{0}(z, w)\right|dzdw \nonumber \\
& \le CL \left\| \p_{x}^{-1}\p_{y}^{j}u_{0} \right\|_{L^1}t^{ -\frac{9}{4} -\frac{l}{2} }, \ \ (x, y) \in \R^{2}, \ t>0. \label{L-ap-est-J1}
\end{align}
On the other hand, from \eqref{L-ap-split}, the triangle inequality, Proposition~\ref{prop.L-decay-K} and \eqref{L1-far}, we get
\begin{align}
\left| J_{2}(x, y, t) \right| 
& \le \int_{|(z, w)| \ge L} \left( \left|\p_{x}^{l+1}K(x-z, y-w, t)\right| + \left|\p_{x}^{l+1}K(x, y-w, t)\right| \right) \left|\p_{z}^{-1}\p_{w}^{j}u_{0}(z, w)\right|dzdw \nonumber \\
& \le Ct^{ -\frac{5}{4} -\frac{l+1}{2} } \int_{|(z, w)| \ge L} \left|\p_{z}^{-1}\p_{w}^{j}u_{0}(z, w)\right|dzdw \nonumber \\
&\le C\e_{0}t^{ -\frac{7}{4} -\frac{l}{2} }, \ \ (x, y) \in \R^{2}, \ t>0. \label{L-ap-est-J2}
\end{align}

Summarizing up \eqref{L-ap-split}, \eqref{L-ap-est-J1} and \eqref{L-ap-est-J2}, we obtain 
\[
\left\| (\p_{x}^{l+1}K(t)*\p_{x}^{-1}\p_{y}^{j}u_{0})(\cdot, \cdot) - \p_{x}^{l}\mathcal{K}_{j}(\cdot, \cdot, t) \right\|_{L^{\infty}} 
\le Ct^{ -\frac{9}{4}-\frac{l}{2} } + C\e_{0} t^{ -\frac{7}{4}-\frac{l}{2} }, \quad t>0.
\] 
Therefore, we finally arrive at
\[
\limsup_{t \to \infty} t^{ \frac{7}{4}+\frac{l}{2} } \left\| (\p_{x}^{l+1}K(t)*\p_{x}^{-1}\p_{y}^{j}u_{0})(\cdot, \cdot) - \p_{x}^{l}\mathcal{K}_{j}(\cdot, \cdot, t) \right\|_{L^{\infty}} \le C\e_{0}.
\] 
Thus, we get \eqref{K-ap}, because $\e_{0}>0$ can be chosen arbitrarily small. This completes the proof. 
\end{proof}

\section{Proofs of the Main Results}  

In this section, we shall prove our main results Theorems~\ref{thm.u-asymp} and \ref{thm.u-est-lower}, i.e. we discuss the asymptotic behavior of the solution $u(x, y, t)$ to \eqref{KPB-s}. This section is divided into three subsections below. 

\subsection{Preliminaries}  

In this subsection, we shall prepare a couple of important propositions and lemmas to prove our main results. 
We start with introducing some basic results given by Molinet~\cite{M99}. First one is for the global well-posedness of \eqref{KPB-s} (for the proof, see Theorem~3.2 in \cite{M99}). 
\begin{thm}\label{thm.u-SDGE}
Let $p\ge1$ be an integer and $s>2$. Suppose that $u_{0} \in X^{s}(\R^{2})$ and $B(u_{0}):=\left\|u_{0}\right\|_{H^{1}}+\left\|\p_{x}^{2}u_{0}\right\|_{L^{2}}+\left\|\p^{-1}_{x}\p_{y}u_{0}\right\|_{L^{2}}$ is sufficiently small. Then \eqref{KPB-s} has a unique global solution 
\begin{equation}\label{u-SDGE}
u \in C_{b}([0, \infty); X^{s}(\R^{2})) \cap L^{2}(0, \infty; H^{s}(\R^{2})),
\end{equation}
and the mapping $u_{0} \mapsto u$ is continuous from $X^{s}(\R^{2})$ into $C([0, \infty); X^{s}(\R^{2}))$. 
Moreover, the solution satisfies the following a priori estimate: 
\begin{equation}\label{apriori-est}
\left\|u(\cdot, \cdot, t)\right\|_{X^{s}}^{2}+\int_{0}^{t}\left\|u_{x}(\cdot, \cdot, \tau)\right\|_{X^{s}}^{2}d\tau \le C_{s}\left(B(u_{0}), \|u_{0}\|_{X^{s}}\right), \ \ t\ge 0,  
\end{equation}
where $C_{s}\left(B(u_{0}), \|u_{0}\|_{X^{s}}\right)$ is a certain positive constant depending on $B(u_{0})$ and $\|u_{0}\|_{X^{s}}$, which goes to $0$ as $u_{0} \to 0 $ in $X^{s}(\R^{2})$. Furthermore, for all positive integer $l$, the solution satisfies 
\[
\p_{x}^{l}u \in C_{b}([1, \infty); H^{s}(\R^{2})) \cap L^{2}(1, \infty; H^{s}(\R^{2})),
\] 
and the mapping $\p_{x}^{l}u(1) \mapsto \p_{x}^{l}u$ is continuous from $H^{s}(\R^{2})$ into $C([1, \infty); H^{s}(\R^{2}))$. 
\end{thm}

Secondly, we shall introduce several decay estimates for $u(x, y, t)$ to \eqref{KPB-s}. The following three propositions play an important role in the proofs of our main results (for the proofs of the propositions below, see Theorem~5.1, Corollary~5.1 and Theorem~5.2 in \cite{M99}, respectively). 
\begin{prop}\label{prop.u-decay-L2}
Let $p\ge1$ be an integer and $s>2$. Suppose that $u_{0} \in X^{s}(\R^{2})$, $\p_{x}^{-1}u_{0} \in L^{1}(\R^{2})$ and $B(u_{0})$ is sufficiently small. If for some integer $m\ge1$, one has $\p_{x}^{-1}\p_{y}^{m}u_{0} \in L^{1}(\R^{2})$, then the solution $u(x,y,t)$ to \eqref{KPB-s} satisfies 
\begin{align}
&\left\|\p_{x}^{l}\p_{y}^{j}u(\cdot, \cdot, t)\right\|_{L^{2}} \le C(1+t)^{-\frac{3}{4}-\frac{l}{2}}, \ \ t\ge0, \label{u-decay-L2}
\end{align}
for all $l = 0, 1, 2$ and $j = 0, 1, \cdots, \min\{[s], m\}-1$. 
\end{prop}
\begin{prop}\label{prop.u-decay-L2-high}
Let $p\ge1$ be an integer and $s>2$. Suppose that $u_{0} \in X^{s}(\R^{2})$, $\p_{x}^{-1}u_{0} \in L^{1}(\R^{2})$ and $B(u_{0})$ is sufficiently small. If for some integer $m\ge2$, one has $\p_{x}^{-1}\p_{y}^{m}u_{0} \in L^{1}(\R^{2})$, then the solution $u(x,y,t)$ to \eqref{KPB-s} satisfies 
\begin{equation}
\left\|\p_{x}^{l}\p_{y}^{j}u(\cdot, \cdot, t)\right\|_{L^{2}} \le Ct^{-\frac{3}{4}-\frac{l}{2}}, \ \ t\ge1, \label{u-decay-L2-high}
\end{equation}
for all positive integer $l$ and $j = 0, 1, \cdots, \min\{[s], m\}-1$. 
\end{prop}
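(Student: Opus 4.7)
The plan is to argue by induction on $l$, with Proposition \ref{prop.u-decay-L2} (which covers $l=0,1,2$ for all $t\ge 0$, hence also for $t\ge 1$) serving as the base case, and to establish the remaining cases $l\ge 3$ by exploiting the parabolic smoothing of the dissipative term $-\nu u_{xx}$; this smoothing is effective only for positive times, which is the reason for the restriction $t\ge 1$.

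The first step is the linear smoothing estimate
\begin{equation*}
\|\partial_x^k S(t)f\|_{L^2}\le C\,t^{-k/2}\|f\|_{L^2},\qquad t>0,\ k\in\mathbb{N}\cup\{0\},
\end{equation*}
which follows at once from Plancherel and the elementary bound $|\xi|^k e^{-\nu t\xi^2}\le C\,t^{-k/2}$ applied to the Fourier representation \eqref{DEF-S} of $S(t)$.

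Next, for $t\ge 2$ I would write the Duhamel formula with base point $t/2$,
\begin{equation*}
u(t)=S(t/2)\,u(t/2)-\frac{1}{p+1}\int_{t/2}^{t}\partial_x S(t-\tau)\bigl(u^{p+1}(\tau)\bigr)\,d\tau,
\end{equation*}
apply $\partial_x^l\partial_y^j$, and bound the two pieces separately. For the linear piece the smoothing estimate with $k=l-2$ (valid when $l\ge 3$) combined with Proposition \ref{prop.u-decay-L2} applied at $l=2$ yields
\begin{equation*}
\|\partial_x^l\partial_y^j S(t/2)u(t/2)\|_{L^2}\le C\,t^{-(l-2)/2}\|\partial_x^2\partial_y^j u(t/2)\|_{L^2}\le C\,t^{-3/4-l/2},
\end{equation*}
which is precisely the target rate; the hypothesis $m\ge 2$ is used here to invoke Proposition \ref{prop.u-decay-L2} at level $l=2$. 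For the Duhamel integrand I would absorb only one derivative into the semigroup, $\|\partial_x^{l+1}S(t-\tau)g\|_{L^2}\le C(t-\tau)^{-1/2}\|\partial_x^l g\|_{L^2}$, so that the singular factor $(t-\tau)^{-1/2}$ is integrable on $[t/2,t]$.

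The decisive step is the control of $\|\partial_x^l\partial_y^j(u^{p+1})(\tau)\|_{L^2}$. Expanding by Leibniz isolates the leading term $(p+1)u^p\partial_x^l\partial_y^j u$, bounded by $\|u\|_{L^\infty}^p\|\partial_x^l\partial_y^j u\|_{L^2}\le C\tau^{-7p/4}\|\partial_x^l\partial_y^j u\|_{L^2}$ via Proposition \ref{prop.u-decay-Linf}; the rapid $\tau^{-7p/4}$ factor (with $p\ge 2$) allows this contribution to be absorbed into the left-hand side, or equivalently handled by a Gronwall/bootstrap argument on the weighted quantity $t^{3/4+l/2}\|\partial_x^l\partial_y^j u(t)\|_{L^2}$. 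The remaining cross products in the Leibniz expansion involve at most $l-1$ $x$-derivatives on each factor, so they are controlled by the induction hypothesis combined with Gagliardo--Nirenberg interpolation and the $L^\infty$ decay of $u$. The main obstacle is the bookkeeping: one must verify that every such cross product produces a $\tau$-integrand decaying at least as $\tau^{-3/4-l/2-\delta}$ for some $\delta>0$, so that after convolving with $(t-\tau)^{-1/2}$ on $[t/2,t]$ the target rate $t^{-3/4-l/2}$ is reproduced and the induction closes.
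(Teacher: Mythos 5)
The paper does not actually prove this proposition: it is quoted verbatim from Molinet's work and the text points to \cite{M99}, Corollary~5.1, for the proof. So there is no internal argument to compare against, and your proposal has to stand on its own. Its overall architecture --- Duhamel from the base point $t/2$, the parabolic smoothing bound $\|\partial_x^k S(t)\ast f\|_{L^2}\le Ct^{-k/2}\|f\|_{L^2}$ (which is correct by Plancherel, since $|\hat S(\xi,\eta,t)|=\frac{1}{2\pi}e^{-\nu t\xi^2}$), and an induction on $l$ anchored at Proposition~\ref{prop.u-decay-L2} --- is a reasonable and standard way to upgrade $L^2$ decay to higher $x$-derivatives, and the linear piece is handled correctly.

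There is, however, a genuine gap in the nonlinear step: you control the leading Leibniz term by $\|u\|_{L^\infty}^p\le C\tau^{-7p/4}$ via Proposition~\ref{prop.u-decay-Linf}. That proposition is not available here. It requires $p\ge2$, $s\ge3$, $\partial_x^{-1}\partial_y^m u_0\in L^1$ for some $m\ge3$, and additional $L^1$ conditions on $\partial_x^{-1}\partial_y^j u_0$, whereas the statement you are proving assumes only $p\ge1$, $s>2$ and $m\ge2$. Worse, in the logical order of \cite{M99} the $L^\infty$ decay (Theorem~5.2 there, Proposition~\ref{prop.u-decay-Linf} here) is derived \emph{after} and \emph{from} the $L^2$ decay estimates you are trying to establish, so invoking it makes the argument circular. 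The repair is to replace it by the anisotropic embedding $\|f\|_{L^\infty}\le C\left(\|f\|_{L^2}+\|f_y\|_{L^2}+\|f_{xx}\|_{L^2}\right)$ (the inequality from \cite{BIN78} already used in Lemma~\ref{lem.u^p+1-est}) together with Proposition~\ref{prop.u-decay-L2}, which gives $\|u(\tau)\|_{L^\infty}\le C(1+\tau)^{-3/4}$; this costs one extra $y$-derivative in $L^2$ (which is where $m\ge2$ actually enters --- not, as you claim, in invoking Proposition~\ref{prop.u-decay-L2} at $l=2$, which only needs $m\ge1$) and still yields an integrable factor $\tau^{-3p/4}$ that closes the bootstrap for every $p\ge1$. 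Finally, the ``bookkeeping'' of the remaining cross terms, which you defer, is where most of the real work lies, and the range $1\le t\le 2$ should be disposed of separately via the uniform bounds of Theorem~\ref{thm.u-SDGE}.
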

\begin{prop}\label{prop.u-decay-Linf}
Let $p\ge2$ be an integer and $s\ge3$. Suppose that $u_{0} \in X^{s}(\R^{2})$, $\p_{x}^{-1}u_{0} \in L^{1}(\R^{2})$, $\p_{x}^{-1}\p_{y}^{m}u_{0} \in L^{1}(\R^{2})$ for some integer $m\ge3$, and $B(u_{0})$ is sufficiently small. If $\p_{x}^{-1}\p_{y}^{j}u_{0} \in L^{1}(\R^{2})$ with $j \in \left\{0, 1, \cdots, \min\{[s], m\}-3\right\}$, then the solution $u(x,y,t)$ to \eqref{KPB-s} satisfies 
\begin{equation}
\left\|\p_{x}^{l}\p_{y}^{j}u(\cdot, \cdot, t)\right\|_{L^{\infty}} \le Ct^{-\frac{7}{4}-\frac{l}{2}}, \ \ t\ge1, \label{u-decay-Linf}
\end{equation}
for all non-negative integer $l$. 
\end{prop}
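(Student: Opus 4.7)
The plan is to apply a double induction on $(j,l)$ to Duhamel's representation
\begin{equation*}
u(t)=S(t)*u_0-\frac{1}{p+1}\int_0^t\p_xS(t-\tau)*u^{p+1}(\tau)\,d\tau,
\end{equation*}
running the target inequality itself as a bootstrap ansatz for fixed $j$ and all $l\ge 0$ simultaneously. The linear term is just Proposition~\ref{prop.L-decay-LKPB}: writing $\p_x^l\p_y^j=\p_x^{l+1}\p_x^{-1}\p_y^j$, Young's inequality against $\|\p_x^{l+1}S(t)\|_{L^\infty}\le Ct^{-7/4-l/2}$ from Proposition~\ref{prop.L-decay-S} and the hypothesis $\p_x^{-1}\p_y^j u_0\in L^1(\R^2)$ immediately give the target. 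The induction runs on $j\in\{0,1,\ldots,\min\{[s],m\}-3\}$, the previous $j$-values providing $L^\infty$-bounds for $\p_x^{l'}\p_y^{j'}u$ at all $l'\ge0$ and $j'<j$.

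For the nonlinear integral I would split at $t/2$. On $[0,t/2]$, place $\p_x^{l+1}$ on $S$ and $\p_y^j$ on $u^{p+1}$; since $t-\tau\ge t/2$, Young yields
\begin{equation*}
\left\|\int_0^{t/2}\p_x^{l+1}S(t-\tau)*\p_y^j(u^{p+1})(\tau)\,d\tau\right\|_{L^\infty}\le Ct^{-7/4-l/2}\int_0^{t/2}\|\p_y^j(u^{p+1})(\tau)\|_{L^1}\,d\tau.
\end{equation*}
A Leibniz expansion, H\"older with equal exponents $p+1$, the interpolation $\|f\|_{L^{p+1}}\le\|f\|_{L^2}^{2/(p+1)}\|f\|_{L^\infty}^{(p-1)/(p+1)}$, and the $L^2$-decays of Propositions~\ref{prop.u-decay-L2}--\ref{prop.u-decay-L2-high} combined with the inductive $L^\infty$-bounds yield $\|\p_y^j(u^{p+1})(\tau)\|_{L^1}\le C(1+\tau)^{-(7p-1)/4}$, integrable in $\tau$ because $p\ge2$. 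This piece is therefore $O(t^{-7/4-l/2})$.

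On $[t/2,t]$ the factor $\p_x^{l+1}S(t-\tau)$ becomes singular at $\tau\uparrow t$, so I redistribute derivatives and rewrite the integrand as $\p_xS(t-\tau)*\p_x^l\p_y^j(u^{p+1})(\tau)$. Since the Fourier multiplier of $\p_xS(t-\tau)$ is $i\xi$ times a unit-modulus modulation of $e^{-\nu(t-\tau)\xi^2}$ and $\sup_\xi|\xi|e^{-\nu(t-\tau)\xi^2}\le C(t-\tau)^{-1/2}$, a Plancherel computation gives $\|\p_xS(t-\tau)*g\|_{H^{s_0}}\le C(t-\tau)^{-1/2}\|g\|_{H^{s_0}}$ for every $s_0\ge0$, and the Sobolev embedding $H^{s_0}\hookrightarrow L^\infty$ with $s_0>1$ upgrades this to an $L^\infty$ estimate. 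Because $\tau\ge t/2$, the tame product inequality $\|fg\|_{H^{s_0}}\le C(\|f\|_{L^\infty}\|g\|_{H^{s_0}}+\|f\|_{H^{s_0}}\|g\|_{L^\infty})$, applied iteratively to the $p+1$ factors of $u^{p+1}$ and combined with the bootstrap ansatz for $\|u\|_{L^\infty}$ and Proposition~\ref{prop.u-decay-L2-high} (whose high-derivative $L^2$-bounds are available provided $j+s_0\le\min\{[s],m\}-1$), provides $\|\p_x^l\p_y^j(u^{p+1})(\tau)\|_{H^{s_0}}\le C\tau^{-7(p+1)/4-l/2+c}$ for some harmless $c$. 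Integrating $(t-\tau)^{-1/2}$ against this on $[t/2,t]$ produces a contribution strictly smaller than $t^{-7/4-l/2}$ when $p\ge2$, closing the bootstrap.

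The hard part is precisely this endpoint $\tau\uparrow t$: the self-similar profile $S_*$ does not decay along the parabola $x+y^2/(4\e)=0$, so $S(t)\notin L^p(\R^2)$ for any $p<\infty$ and the naive Young inequality is unavailable on $[t/2,t]$. Trading $L^p$-boundedness of the kernel for a $(t-\tau)^{-1/2}$ Plancherel smoothing on $H^{s_0}$ and absorbing the resulting $\sqrt{t}$ against the nonlinear gain from $p\ge2$ is the decisive step; the remaining technical hurdle is the bookkeeping of the simultaneous $(l,j)$-induction and its compatibility with the regularity threshold $\min\{[s],m\}-3$.
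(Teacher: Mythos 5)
The paper itself does not prove this proposition: it is imported verbatim from Molinet (Theorem~5.2 of \cite{M99}), together with Propositions~\ref{prop.u-decay-L2} and \ref{prop.u-decay-L2-high}, and is used as a black-box input. Your reconstruction is nonetheless essentially the right argument, and it closely parallels the machinery the paper builds for the Duhamel term in Section~3 (Lemma~\ref{lem.Duhamel-est}, Lemma~\ref{lem.u^p+1-est}, Proposition~\ref{prop.Duhamel-asymp}): the linear part via $\p_x^l\p_y^j S(t)*u_0=\p_x^{l+1}S(t)*\p_x^{-1}\p_y^ju_0$ and Young, the split at $t/2$, Young against $\|\p_x^{l+1}S(t-\tau)\|_{L^\infty}\le C(t-\tau)^{-\frac{7}{4}-\frac{l}{2}}$ on $[0,t/2]$, and on $[t/2,t]$ a trade of the kernel's non-integrability for a $(t-\tau)^{-\frac{1}{2}}$ factor. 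You correctly identify the crux: since the profile is a function of $x+y^2/(4t\e)$ alone, $S(\cdot,\cdot,t)$ lies in no $L^q(\R^2)$ with $q<\infty$, so Young fails near $\tau=t$. The paper gains $(t-\tau)^{-\frac{1}{2}}$ by bounding $\|\hat S\hat f\|_{L^1}$ with $\int_{\R}e^{-\nu(t-\tau)\xi^2}d\xi$ and $\|f\|_{L^1}+\|\p_y^2f\|_{L^1}$; you gain it by Plancherel on $\p_xS$ plus $H^{s_0}\hookrightarrow L^\infty$. Both routes cost the same amount of $y$-regularity ($j+2\le\min\{[s],m\}-1$, which is exactly the hypothesis $j\le\min\{[s],m\}-3$), so your variant is legitimate.

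Two soft spots. First, the bootstrap is both under-specified and unnecessary. Every nonlinear bound you need closes using only the $L^2$ decay of Propositions~\ref{prop.u-decay-L2} and \ref{prop.u-decay-L2-high} together with the a priori bound $\p_y^{q}u\in C_b([0,\infty);L^\infty)$ coming from $u\in C_b([0,\infty);X^s)$ and Sobolev embedding (this is precisely how Lemma~\ref{lem.u^p+1-est} is proved), so no self-referential ansatz, no continuity argument, and no uniformity of constants over all $l\ge0$ simultaneously need to be justified; if you insist on the bootstrap you must at least say where it starts and why the derived constant improves the assumed one. Second, the exponent $-\frac{7(p+1)}{4}$ for $\|\p_x^l\p_y^j(u^{p+1})(\tau)\|_{H^{s_0}}$ is not attainable: at least one factor must be measured in $L^2$, which decays only like $\tau^{-\frac{3}{4}}$, so the honest rate is of the order $\tau^{-\frac{7p}{4}-\frac{3}{4}-\frac{l}{2}}$ (or $\tau^{-\frac{l+1}{2}-\frac{3}{4}(p+1)}$ as in \eqref{u^p+1-est-2} if one uses no $L^\infty$ decay at all). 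Either rate still beats the required $\tau^{-\frac{9}{4}-\frac{l}{2}}$ after multiplication by $\sqrt{t}$, precisely because $p\ge2$, so the conclusion stands; but the phrase ``for some harmless $c$'' hides the one numerical comparison on which the whole proof turns, and it should be made explicit.
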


Next, we would like to prepare the following lemma to treat the Duhamel term of the integral equations \eqref{integral-eq} below and \eqref{w-sol}, associated with the Cauchy problems \eqref{KPB-s} and \eqref{w-asymp}. 
\begin{lem}\label{lem.Duhamel-est}
Let $t>a>0$, $f \in C((0, \infty); L^{1}(\R^{2}))$ and $\p_{y}^{2}f \in C((0, \infty); L^{1}(\R^{2}))$. Then, we have 
\begin{align}
&\left\| \int_{a}^{t} S(t-\tau)*f(\tau) d\tau \right\|_{L^{\infty}}
\le C\sqrt{t} \sup_{a \le \tau \le t}\left( \left\|f(\cdot, \cdot, \tau)\right\|_{L^{1}}+\left\|\p_{y}^{2}f(\cdot, \cdot, \tau)\right\|_{L^{1}} \right), \label{Duhamel-S-est} \\
&\left\| \int_{a}^{t} K(t-\tau)*f(\tau) d\tau \right\|_{L^{\infty}}
\le C\sqrt{t} \sup_{a \le \tau \le t}\left( \left\|f(\cdot, \cdot, \tau)\right\|_{L^{1}}+\left\|\p_{y}^{2}f(\cdot, \cdot, \tau)\right\|_{L^{1}} \right), \label{Duhamel-K-est}
\end{align}
where $S(x, y, t)$ and $K(x, y, t)$ are defined by \eqref{DEF-S} and \eqref{DEF-K}, respectively. 
\end{lem}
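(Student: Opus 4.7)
\medskip
\noindent
\textbf{Proof proposal for Lemma~\ref{lem.Duhamel-est}.}

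The first temptation is to apply Young's inequality directly: $\|S(t-\tau)*f(\tau)\|_{L^\infty}\le \|S(t-\tau)\|_{L^\infty}\|f(\tau)\|_{L^1}\le C(t-\tau)^{-5/4}\|f(\tau)\|_{L^1}$ by Proposition~\ref{prop.L-decay-S}. However, $(t-\tau)^{-5/4}$ is \emph{not} integrable near $\tau=t$, so this route cannot succeed without exploiting additional regularity of $f$. This is the main obstacle, and it explains the appearance of $\p_y^2 f$ in the hypothesis: we need to recover integrability in $\tau$ by trading off the non-integrable singularity coming from the $L^\infty$-bound on $S$.

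The plan is to estimate $S(t-\tau)*f(\tau)$ \emph{on the Fourier side}, where the multiplier $\widehat{S}(\xi,\eta,t-\tau)$ has modulus $\frac{1}{2\pi}e^{-\nu(t-\tau)\xi^2}$, so that all oscillations in $\eta$ and $\xi^3$ drop out and only the parabolic Gaussian in $\xi$ remains. Concretely, writing the convolution as multiplication in Fourier variables and using the elementary bound $\|g\|_{L^\infty}\le \frac{1}{2\pi}\|\widehat{g}\|_{L^1}$, I obtain
\[
\|S(t-\tau)*f(\tau)\|_{L^\infty}\le C\int_{\R^2}e^{-\nu(t-\tau)\xi^2}\,|\widehat{f}(\xi,\eta,\tau)|\,d\xi\,d\eta.
\]
Since $\widehat{f}$ and $\widehat{\p_y^2 f}(\xi,\eta,\tau)=-\eta^2\widehat{f}(\xi,\eta,\tau)$ are both bounded in $L^\infty$ by constant multiples of $\|f(\tau)\|_{L^1}$ and $\|\p_y^2 f(\tau)\|_{L^1}$ respectively, the product $(1+\eta^2)|\widehat{f}(\xi,\eta,\tau)|$ is controlled by $C(\|f(\tau)\|_{L^1}+\|\p_y^2 f(\tau)\|_{L^1})$ uniformly in $(\xi,\eta)$.

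Inserting this bound and separating variables gives
\[
\|S(t-\tau)*f(\tau)\|_{L^\infty}\le C\bigl(\|f(\tau)\|_{L^1}+\|\p_y^2 f(\tau)\|_{L^1}\bigr)\Bigl(\int_{\R}e^{-\nu(t-\tau)\xi^2}d\xi\Bigr)\Bigl(\int_{\R}\frac{d\eta}{1+\eta^2}\Bigr),
\]
and both integrals are explicit: the first equals $\sqrt{\pi/(\nu(t-\tau))}$ while the second is $\pi$. Hence
\[
\|S(t-\tau)*f(\tau)\|_{L^\infty}\le C(t-\tau)^{-1/2}\bigl(\|f(\tau)\|_{L^1}+\|\p_y^2 f(\tau)\|_{L^1}\bigr).
\]
Finally, integrating from $a$ to $t$ yields $\int_a^t(t-\tau)^{-1/2}d\tau=2\sqrt{t-a}\le 2\sqrt{t}$, which produces exactly the factor $\sqrt{t}$ in the desired estimate \eqref{Duhamel-S-est}.

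The proof of \eqref{Duhamel-K-est} is identical word for word: by \eqref{DEF-K}--\eqref{ap-derivation} one has $\widehat{K}(\xi,\eta,t)=\frac{1}{2\pi}e^{-\nu t\xi^2-it\e\eta^2/\xi}$, whose modulus is the same parabolic Gaussian $\frac{1}{2\pi}e^{-\nu t\xi^2}$ as $\widehat{S}$. So the same Fourier-side argument applies verbatim with $K$ in place of $S$, and no separate computation is required.
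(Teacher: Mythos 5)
Your proposal is correct and follows essentially the same route as the paper: pass to the Fourier side, bound $|\widehat{S}|$ (resp.\ $|\widehat{K}|$) by the Gaussian $\frac{1}{2\pi}e^{-\nu(t-\tau)\xi^{2}}$, insert the weight $(1+\eta^{2})^{-1}(1+\eta^{2})|\widehat{f}|$ to control the $\eta$-integral via $\|f\|_{L^{1}}+\|\p_{y}^{2}f\|_{L^{1}}$, separate variables, and integrate $(t-\tau)^{-1/2}$ over $[a,t]$ to produce the factor $\sqrt{t}$. The paper's proof is word-for-word the same computation, including the observation that the $K$-estimate follows identically since $|\widehat{K}|=|\widehat{S}|$.
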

\begin{proof}
We shall only prove \eqref{Duhamel-S-est} because \eqref{Duhamel-K-est} can be shown in the same way. It follows from the Fourier transform and the definition of $S(x, y, t)$ (i.e. \eqref{DEF-S}) that 
\begin{align*}
&\left\| \int_{a}^{t} S(t-\tau)*f(\tau) d\tau \right\|_{L^{\infty}}
 = \sup_{(x, y)\in \R^{2}}\left| \int_{a}^{t}\int_{\R^{2}}\hat{S}(\xi, \eta, t-\tau) e^{ix\xi+iy\eta} \hat{f}(\xi, \eta, \tau) d\xi d\eta d\tau \right| \\
& \le \int_{a}^{t}\int_{\R^{2}}\left|\hat{S}(\xi, \eta, t-\tau)\right|\left\{1-(i\eta)^{2}\right\}^{-1}
\left|\left\{1-(i\eta)^{2}\right\}\hat{f}(\xi, \eta, \tau) \right| d\xi d\eta d\tau\\
& \le C\int_{a}^{t}\left( \left\|f(\cdot, \cdot, \tau)\right\|_{L^{1}}+\left\|\p_{y}^{2}f(\cdot, \cdot, \tau)\right\|_{L^{1}} \right)\left( \int_{\R}\exp\left(-\nu(t-\tau)\xi^{2}\right)d\xi \right) \left( \int_{\R}\frac{1}{1+\eta^{2}} d\eta \right) d\tau \\
&\le C\sup_{a\le \tau \le t}\left( \left\|f(\cdot, \cdot, \tau)\right\|_{L^{1}}+\left\|\p_{y}^{2}f(\cdot, \cdot, \tau)\right\|_{L^{1}} \right)\int_{a}^{t} \sqrt{\frac{\pi}{\nu(t-\tau)}} d\tau \\
&\le C\sqrt{t} \sup_{a\le \tau \le t}\left( \left\|f(\cdot, \cdot, \tau)\right\|_{L^{1}}+\left\|\p_{y}^{2}f(\cdot, \cdot, \tau)\right\|_{L^{1}} \right).
\end{align*}
This completes the proof of \eqref{Duhamel-S-est}. 
\end{proof}

In order to handle the Duhamel term of the integral equations, in addition to the above lemma, 
we need to introduce the following decay estimates for the nonlinear term: 
\begin{lem}\label{lem.u^p+1-est}
Under the same assumptions in Proposition~\ref{prop.u-decay-Linf}, the solution $u(x,y,t)$ to \eqref{KPB-s} satisfies 
\begin{align}
&\left\|\p_{y}^{j}(u^{p+1}(\cdot, \cdot, \tau))\right\|_{L^{1}} \le C_{s}\left(B(u_{0}), \|u_{0}\|_{X^{s}}\right)^{p-1}(1+\tau)^{-\frac{3}{2}}, \ \ \tau \ge0, \label{u^p+1-est-1} \\
&\left\|\p_{x}^{l+1}\p_{y}^{j}(u^{p+1}(\cdot, \cdot, \tau))\right\|_{L^{1}}+\left\|\p_{x}^{l+1}\p_{y}^{j+2}(u^{p+1}(\cdot, \cdot, \tau))\right\|_{L^{1}} 
\le C\tau^{-\frac{l+1}{2}-\frac{3}{4}(p+1)}, \ \ \tau \ge1, \label{u^p+1-est-2} 
\end{align}
for all non-negative integer $l$. 
\end{lem}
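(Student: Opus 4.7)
The plan is to expand $u^{p+1}$ using the Leibniz rule and then bound each resulting product by H\"older's inequality, always allocating exactly two factors to $L^{2}$ and the remaining $p-1$ factors to $L^{\infty}$. The required decay rates will then follow from the decay estimates of Propositions~\ref{prop.u-decay-L2}, \ref{prop.u-decay-L2-high} and~\ref{prop.u-decay-Linf}, together with the uniform $X^{s}$-boundedness of the solution provided by Theorem~\ref{thm.u-SDGE}.

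For~\eqref{u^p+1-est-1} I would first write
\[
\p_{y}^{j}(u^{p+1}) = \sum_{i_{1}+\cdots+i_{p+1}=j} c_{i_{1},\ldots,i_{p+1}} \prod_{k=1}^{p+1}\p_{y}^{i_{k}}u,
\]
and apply H\"older placing the two factors with the largest $i_{k}$ into $L^{2}$ and the remaining $p-1$ factors into $L^{\infty}$. Because $i_{k}\le j\le \min\{[s],m\}-3\le \min\{[s],m\}-1$, Proposition~\ref{prop.u-decay-L2} yields $\|\p_{y}^{i_{k}}u(\cdot,\cdot,\tau)\|_{L^{2}}\le C(1+\tau)^{-3/4}$, and the $L^{\infty}$ norms of the remaining $p-1$ factors are bounded uniformly in $\tau\ge 0$ by combining Theorem~\ref{thm.u-SDGE} with Sobolev embedding $H^{2}(\R^{2})\hookrightarrow L^{\infty}(\R^{2})$. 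Multiplying these bounds produces $(1+\tau)^{-3/2}$, which is~\eqref{u^p+1-est-1}. A key point here is that Proposition~\ref{prop.u-decay-Linf} cannot be invoked for the $L^{\infty}$ factors, since~\eqref{u^p+1-est-1} must be valid for all $\tau\ge0$, not only $\tau\ge1$.

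For~\eqref{u^p+1-est-2}, after expanding $\p_{x}^{l+1}\p_{y}^{j}(u^{p+1})$ into a sum over multi-indices $(a_{k},b_{k})_{k=1}^{p+1}$ with $\sum a_{k}=l+1$ and $\sum b_{k}=j$, the same H\"older strategy leads to a bound of the form
\[
C\sum \|\p_{x}^{a_{k_{1}}}\p_{y}^{b_{k_{1}}}u\|_{L^{2}}\,\|\p_{x}^{a_{k_{2}}}\p_{y}^{b_{k_{2}}}u\|_{L^{2}}\prod_{k\ne k_{1},k_{2}}\|\p_{x}^{a_{k}}\p_{y}^{b_{k}}u\|_{L^{\infty}}.
\]
For $\tau\ge 1$, Propositions~\ref{prop.u-decay-L2} and~\ref{prop.u-decay-L2-high} together yield $\|\p_{x}^{a_{k}}\p_{y}^{b_{k}}u\|_{L^{2}}\le C\tau^{-3/4-a_{k}/2}$ uniformly in $a_{k}\ge 0$, and Proposition~\ref{prop.u-decay-Linf} gives $\|\p_{x}^{a_{k}}\p_{y}^{b_{k}}u\|_{L^{\infty}}\le C\tau^{-7/4-a_{k}/2}$. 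Taking the product and using $\sum_{k}a_{k}=l+1$, one obtains the rate $\tau^{-3/2-7(p-1)/4-(l+1)/2}$, which for every integer $p\ge 2$ is no larger than $\tau^{-3(p+1)/4-(l+1)/2}$. The version with $\p_{y}^{j+2}$ is handled identically, noting that $j+2\le \min\{[s],m\}-1$ is still within the admissible range for the $L^{2}$-estimates.

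The main (albeit routine) obstacle will be the index bookkeeping, especially in the $\p_{y}^{j+2}$ case: after sorting the $b_{k}$'s in decreasing order and placing the top two into $L^{2}$, the $L^{\infty}$ factors satisfy $b_{k}\le (j+2)/3$, and one must verify case by case that this remains $\le \min\{[s],m\}-3$ so that Proposition~\ref{prop.u-decay-Linf} is applicable. Aside from this index management, the proof is a straightforward combination of the decay estimates already at our disposal.
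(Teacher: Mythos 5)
Your treatment of \eqref{u^p+1-est-1} coincides with the paper's: Leibniz expansion, two factors in $L^{2}$ via Proposition~\ref{prop.u-decay-L2}, the remaining $p-1$ factors bounded uniformly in time through Theorem~\ref{thm.u-SDGE} and Sobolev embedding; your remark that Proposition~\ref{prop.u-decay-Linf} must not be used here (since the bound is needed down to $\tau=0$) is exactly the right observation.

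For \eqref{u^p+1-est-2}, however, there is a genuine gap in the way you bound the $L^{\infty}$ factors. You invoke Proposition~\ref{prop.u-decay-Linf} to claim $\|\p_{x}^{a_{k}}\p_{y}^{b_{k}}u(\cdot,\cdot,\tau)\|_{L^{\infty}}\le C\tau^{-\frac{7}{4}-\frac{a_{k}}{2}}$, but that proposition yields the decay of $\p_{x}^{l}\p_{y}^{b}u$ only under the hypothesis $\p_{x}^{-1}\p_{y}^{b}u_{0}\in L^{1}(\R^{2})$ for that particular order $b$. The standing assumptions give this only for $b=0$, $b=j$ and $b=m$; the Leibniz expansion of $\p_{x}^{l+1}\p_{y}^{j+2}(u^{p+1})$ produces factors with intermediate orders $0<b_{k}<j$ (already for $j\ge 2$), for which no $L^{1}$ condition on $\p_{x}^{-1}\p_{y}^{b_{k}}u_{0}$ is available, so the $\tau^{-\frac{7}{4}-\frac{a_{k}}{2}}$ bound cannot be quoted. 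Your ``index bookkeeping'' worry about whether $b_{k}\le\min\{[s],m\}-3$ is thus not the real obstruction --- the missing ingredient is the data assumption itself. The paper circumvents this entirely: for $k\ge3$ it estimates the $L^{\infty}$ factors through the anisotropic embedding $\|f\|_{L^{\infty}}\le C\left(\|f\|_{L^{2}}+\|f_{y}\|_{L^{2}}+\|f_{xx}\|_{L^{2}}\right)$ combined with Proposition~\ref{prop.u-decay-L2-high}, whose conclusion holds for \emph{every} $y$-derivative order up to $\min\{[s],m\}-1$ with no extra hypotheses on $u_{0}$. This gives only $\|\p_{x}^{a_{k}}\p_{y}^{b_{k}}u(\cdot,\cdot,\tau)\|_{L^{\infty}}\le C\tau^{-\frac{3}{4}-\frac{a_{k}}{2}}$, weaker than what you wanted, but the product over all factors then lands exactly on $\tau^{-\frac{3}{4}(p+1)-\frac{l+1}{2}}$, which is the stated rate. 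To repair your version you would either have to switch to this $L^{2}$-based bound, or first prove that $\p_{x}^{-1}\p_{y}^{b}u_{0}\in L^{1}(\R^{2})$ for all intermediate $b$ (say by a Landau--Kolmogorov interpolation in the $y$-variable), which is an additional argument you have not supplied.
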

\begin{proof}
First, we shall prove \eqref{u^p+1-est-1}. From the Leibniz rule and the Schwarz inequality, we have 
\begin{align}
&\left\|\p_{y}^{j}(u^{p+1}(\cdot, \cdot, \tau))\right\|_{L^{1}} \nonumber  \\
&\le C \sum_{\substack{q_{1}+\cdots+q_{p+1}=j; \\ q_{1} \ge \cdots \ge q_{p+1}}} \left(\prod_{k=3}^{p+1} \left\| \p_{y}^{q_{k}}u(\cdot, \cdot, \tau) \right\|_{L^{\infty}} \right) 
\left\| \p_{y}^{q_{1}}u(\cdot, \cdot, \tau) \right\|_{L^{2}} \left\| \p_{y}^{q_{2}}u(\cdot, \cdot, \tau) \right\|_{L^{2}}. \label{u^p+1-est-pre1}
\end{align}
If $q_{k}\le j \le [s]-3$, from \eqref{u-SDGE}, \eqref{space-Xs} and the Sobolev embedding theorem, we can see that the solution satisfies $\p_{y}^{q_{k}}u \in C_{b}([0, \infty); L^{\infty}(\R^{2}))$. More precisely, it follows from \eqref{apriori-est} that 
\begin{align}
\left\| \p_{y}^{q_{k}}u(\cdot, \cdot, \tau) \right\|_{L^{\infty}}
&\le C\left\| \p_{y}^{q_{k}}u(\cdot, \cdot, \tau) \right\|_{H^{s-[s]+3}} 
\le C\left\| \p_{y}^{q_{k}}u(\cdot, \cdot, \tau) \right\|_{X^{s-[s]+3}} \nonumber \\
&\le C\left\| u(\cdot, \cdot, \tau) \right\|_{X^{s}} 
\le C_{s}\left(B(u_{0}), \|u_{0}\|_{X^{s}}\right), \ \ \tau \ge0. \label{embedding}
\end{align}
Therefore, by virtue of \eqref{u^p+1-est-pre1}, \eqref{embedding} and Proposition~\ref{prop.u-decay-L2}, we immediately obtain \eqref{u^p+1-est-1}. 

Next, let us prove \eqref{u^p+1-est-2}. Since the $y$-derivative do not improve the decay rate of the solution (cf.~Proposition~\ref{prop.u-decay-Linf}), we only consider the estimate for the second term of the left hand side of \eqref{u^p+1-est-2}. In the same way to get \eqref{u^p+1-est-pre1}, we have 
\begin{align}
&\left\|\p_{x}^{l+1}\p_{y}^{j+2}(u^{p+1}(\cdot, \cdot, \tau))\right\|_{L^{1}} \nonumber  \\
&\le C \sum_{\substack{q_{1}+\cdots+q_{p+1}=j+2; \\ q_{1} \ge \cdots \ge q_{p+1}, \\ r_{1}+\cdots+r_{p+1}=l+1}} \left(\prod_{k=3}^{p+1} \left\| \p_{x}^{r_{k}}\p_{y}^{q_{k}}u(\cdot, \cdot, \tau) \right\|_{L^{\infty}} \right) 
\left\| \p_{x}^{r_{1}}\p_{y}^{q_{1}}u(\cdot, \cdot, \tau) \right\|_{L^{2}} \left\| \p_{x}^{r_{2}}\p_{y}^{q_{2}}u(\cdot, \cdot, \tau) \right\|_{L^{2}}. \label{u^p+1-est-pre2}
\end{align}
For $k\ge3$, since $q_{k}\le j \le \min\{[s], m\}-3$, by using an inequality (cf.~\cite{BIN78})
\begin{equation*}
\left\|f\right\|_{L^{\infty}}\le C\left( \left\|f\right\|_{L^{2}}+\left\|f_{y}\right\|_{L^{2}}+\left\|f_{xx}\right\|_{L^{2}} \right), \quad f\in H^{2}(\R^{2})
\end{equation*}
 and Proposition~\ref{prop.u-decay-L2-high}, we obtain 
\begin{align}
\left\| \p_{x}^{r_{k}}\p_{y}^{q_{k}}u(\cdot, \cdot, \tau) \right\|_{L^{\infty}} 
&\le C\left( \left\| \p_{x}^{r_{k}}\p_{y}^{q_{k}}u(\cdot, \cdot, \tau) \right\|_{L^{2}} + \left\| \p_{x}^{r_{k}}\p_{y}^{q_{k}+1}u(\cdot, \cdot, \tau) \right\|_{L^{2}} + \left\| \p_{x}^{r_{k}+2}\p_{y}^{q_{k}}u(\cdot, \cdot, \tau) \right\|_{L^{2}} \right) \nonumber \\
&\le C\tau^{-\frac{r_{k}}{2}-\frac{3}{4}}, \ \ \tau \ge1. \label{u^p+1-est-pre3}
\end{align}
For $k=1, 2$, since $q_{k}\le j+2 \le \min\{[s], m\}-1$, it follows from Propositions~\ref{prop.u-decay-L2} and \ref{prop.u-decay-L2-high} that  
\begin{equation}\label{u^p+1-est-pre4}
\left\| \p_{x}^{r_{1}}\p_{y}^{q_{1}}u(\cdot, \cdot, \tau) \right\|_{L^{2}}\left\| \p_{x}^{r_{2}}\p_{y}^{q_{2}}u(\cdot, \cdot, \tau) \right\|_{L^{2}} \le C\tau^{-\frac{r_{1}+r_{2}}{2}-\frac{3}{2}}, \ \ \tau \ge1.
\end{equation}
Therefore, combining \eqref{u^p+1-est-pre2}, \eqref{u^p+1-est-pre3} and \eqref{u^p+1-est-pre4}, we arrive at 
\begin{equation*}
\left\|\p_{x}^{l+1}\p_{y}^{j}(u^{p+1}(\cdot, \cdot, \tau))\right\|_{L^{1}}+\left\|\p_{x}^{l+1}\p_{y}^{j+2}(u^{p+1}(\cdot, \cdot, \tau))\right\|_{L^{1}} 
\le C\tau^{-\frac{l+1}{2}-\frac{3}{4}(p+1)}, \ \ \tau \ge1.
\end{equation*}
Thus, we obtain \eqref{u^p+1-est-2}. This completes the proof. 
\end{proof}

\begin{cor}\label{M-uniform}
Under the same assumptions in Proposition~\ref{prop.u-decay-Linf}, the solution $u(x,y,t)$ to \eqref{KPB-s} satisfies 
\begin{equation}\label{u^p+1-uniform-est}
\int_{0}^{\infty}\int_{\R^{2}}\left|\p_{y}^{j}\left(u^{p+1}(x, y, t)\right)\right|dxdydt\le C_{s}\left(B(u_{0}), \|u_{0}\|_{X^{s}}\right)^{p-1}.
\end{equation}
\end{cor}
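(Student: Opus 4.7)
The plan is to apply Tonelli's theorem to exchange the order of integration, and then invoke the bound already established in Lemma~\ref{lem.u^p+1-est}. Specifically, since $\p_y^j(u^{p+1})$ is measurable and its absolute value is non-negative, Tonelli's theorem gives
\[
\int_{0}^{\infty}\int_{\R^{2}}\left|\p_{y}^{j}\left(u^{p+1}(x, y, t)\right)\right|dxdydt
= \int_{0}^{\infty}\left\|\p_{y}^{j}(u^{p+1}(\cdot, \cdot, t))\right\|_{L^{1}}\,dt.
\]

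Now I would apply estimate~\eqref{u^p+1-est-1} from Lemma~\ref{lem.u^p+1-est}, which under the hypotheses of Proposition~\ref{prop.u-decay-Linf} yields
\[
\left\|\p_{y}^{j}(u^{p+1}(\cdot, \cdot, t))\right\|_{L^{1}} \le C(1+t)^{-\frac{3}{2}}, \quad t\ge0.
\]
Since $\int_{0}^{\infty}(1+t)^{-3/2}dt = 2 < \infty$, the right-hand integral is finite, which gives \eqref{u^p+1-uniform-est}.

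There is no real obstacle here: all the nontrivial work (the Leibniz decomposition combined with Proposition~\ref{prop.u-decay-L2} and the uniform-in-time $L^\infty$ bound on the lower-order $y$-derivatives of $u$ coming from the persistence~\eqref{u-SDGE}) has already been carried out in the proof of Lemma~\ref{lem.u^p+1-est}. The only point to check is that the decay rate $(1+t)^{-3/2}$ obtained there is strictly better than $(1+t)^{-1}$, so the time integral converges at infinity, and that the $L^1$-norm remains finite down to $t=0$ thanks to the $(1+t)^{-3/2}$ form (as opposed to a bound like $t^{-3/2}$, which would be non-integrable near $0$). This is precisely why estimate~\eqref{u^p+1-est-1} was formulated with $(1+\tau)$ rather than $\tau$ in Lemma~\ref{lem.u^p+1-est}.
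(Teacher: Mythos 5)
Your proposal is correct and follows exactly the same route as the paper: rewrite the space-time integral as $\int_{0}^{\infty}\|\p_{y}^{j}(u^{p+1}(\cdot,\cdot,\tau))\|_{L^{1}}\,d\tau$ and apply estimate \eqref{u^p+1-est-1} together with the integrability of $(1+\tau)^{-3/2}$ on $[0,\infty)$. Your added remarks on Tonelli and on the role of the $(1+\tau)$ factor near $\tau=0$ are accurate but not needed beyond what the paper's one-line proof already contains.
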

\begin{proof}
It immediately follows from \eqref{u^p+1-est-1} that 
\begin{align*}
&\int_{0}^{\infty}\int_{\R^{2}}\left|\p_{y}^{j}\left(u^{p+1}(x, y, t)\right)\right|dxdydt
= \int_{0}^{\infty}\left\|\p_{y}^{j}(u^{p+1}(\cdot, \cdot, \tau))\right\|_{L^{1}} d\tau  \\
&\le C_{s}\left(B(u_{0}), \|u_{0}\|_{X^{s}}\right)^{p-1}\int_{0}^{\infty}(1+\tau)^{-\frac{3}{2}} d\tau 
\le C_{s}\left(B(u_{0}), \|u_{0}\|_{X^{s}}\right)^{p-1}. 
\end{align*}
\end{proof}
\begin{rem}\label{rem-M}
{\rm
From \eqref{u^p+1-uniform-est}, we can check that $\mathcal{M}_{j}$ defined by \eqref{DEF-mathM} is finite. 
Actually, $\mathcal{M}_{j}$ can be evaluated a priori as follows: 
\[
\left|\mathcal{M}_{j}\right| \le \left\|\p_{x}^{-1}\p_{y}^{j}u_{0}\right\|_{L^{1}} + C_{s}\left(B(u_{0}), \|u_{0}\|_{X^{s}}\right)^{p-1}. 
\]
Moreover, under the smallness assumptions on the initial data $u_{0}$, we can lead $\mathcal{M}_{j} \neq 0$. 
Indeed, if we take the initial data $u_{0}$ to satisfy the following conditions: 
\begin{equation}\label{data-condition}
C_{s}\left(B(u_{0}), \|u_{0}\|_{X^{s}}\right)< \left\{\left|\int_{\R^{2}}\p_{x}^{-1}\p_{y}^{j}u_{0}(x, y)dxdy\right|\right\}^{\frac{1}{p-1}}, \quad 
\int_{\R^{2}}\p_{x}^{-1}\p_{y}^{j}u_{0}(x, y)dxdy\neq0, 
\end{equation}
then we are able to see that 
\begin{align*}
\left|\mathcal{M}_{j}\right| 
&\ge \left|\int_{\R^{2}}\p_{x}^{-1}\p_{y}^{j}u_{0}(x, y)dxdy\right|-\frac{1}{p+1}\int_{0}^{\infty}\int_{\R^{2}}\left|\p_{y}^{j}\left(u^{p+1}(x, y, t)\right)\right|dxdydt \\
&\ge \left|\int_{\R^{2}}\p_{x}^{-1}\p_{y}^{j}u_{0}(x, y)dxdy\right|-C_{s}\left(B(u_{0}), \|u_{0}\|_{X^{s}}\right)^{p-1}>0.
\end{align*}
}
\end{rem}
\begin{rem}\label{rem-M2}
{\rm 
If $u_0$ satisfies 
the second condition of 
{\rm (\ref{data-condition})}, 
then we can find the function $v_0$ which satisfies 
both of two conditions in 
{\rm (\ref{data-condition})} as follows. 

Let $0<\lambda <1$ and put
\[
u_0^{\lambda}(x,y):=\lambda^{\alpha}u_0\left(\lambda^{\beta}x,\lambda^{\gamma}y \right),
\]
where $\alpha$, $\beta$, $\gamma>0$ will be chosen later. 
Let $|\partial_x|^s$ denotes the Fourier multiplier 
defined by
\[
\left( |\partial_x|^s f \right)(x,y)
:=\mathcal{F}^{-1}\left[|\xi|^s\widehat{f}(\xi,\eta)\right](x,y). 
\]
By the simple calculation, we can see that
\[
\begin{split}
\left\||\partial_x|^su^{\lambda}_0\right\|_{L^2}
&=\lambda^{\alpha +\left(s-\frac{1}{2}\right)\beta-\frac{\gamma}{2}}
\left\||\partial_x|^su_0\right\|_{L^2},\\
\left\||\partial_x|^s\partial_x^{-1}u^{\lambda}_0\right\|_{L^2}
&=\lambda^{\alpha +\left(s-\frac{3}{2}\right)\beta-\frac{\gamma}{2}}
\left\||\partial_x|^s\partial_x^{-1}u_0\right\|_{L^2}
\end{split}
\]
for any $s\ge 0$ and
\[
\left\|\partial_x^{-1}\partial_yu_0^{\lambda}\right\|_{L^2}
=\lambda^{\alpha -\frac{3}{2}\beta+\frac{\gamma}{2}}
\left\|\partial_x^{-1}\partial_yu_0\right\|_{L^2}. 
\]
These facts imply that
\[
\left\|u_0^{\lambda}\right\|_{X^s}
\le 
\left(\lambda^{\alpha-\frac{\beta}{2}-\frac{\gamma}{2}}
+\lambda^{\alpha+\left(s-\frac{1}{2}\right)\beta -\frac{\gamma}{2}}
+\lambda^{\alpha-\frac{3}{2}\beta -\frac{\gamma}{2}}
+\lambda^{\alpha+\left(s-\frac{3}{2}\right)\beta -\frac{\gamma}{2}}
\right)
\left\|u_0\right\|_{X^s}
\]
and
\[
B\left(u_0^{\lambda}\right)
\le 
\left(\lambda^{\alpha-\frac{\beta}{2}-\frac{\gamma}{2}}
+\lambda^{\alpha+\frac{\beta}{2}-\frac{\gamma}{2}}
+\lambda^{\alpha+\frac{3}{2}\beta -\frac{\gamma}{2}}
+\lambda^{\alpha-\frac{3}{2}\beta +\frac{\gamma}{2}}
\right)B(u_0). 
\]
On the other hand, we have
\[
\int_{\R^2}\partial_x^{-1}\partial_y^ju_0^{\lambda}(x,y)dxdy
=\lambda^{\alpha -2\beta +(j-1)\gamma}
\int_{\R^2}\partial_x^{-1}\partial_y^ju_0(x,y)dxdy
\]
for any $j\in \mathbb{N}\cup \{0\}$. 
If we choose $\alpha$, $\beta$, $\gamma>0$ as
\[
\alpha =2\beta -(j-1)\gamma, \quad
\beta =2(j+1)\gamma, 
\]
then we obtain
\[
\begin{split}
\left\|u_0^{\lambda}\right\|_{X^s}
&\le 4\lambda^{\alpha -\frac{3}{2}\beta-\frac{\gamma}{2}}
\left\|u_0\right\|_{X^s}
=4\lambda^{\frac{3}{2}\gamma}\|u_0\|_{X^s},\\
B\left(u_0^{\lambda}\right)
&\le 4\lambda^{\alpha -\frac{3}{2}\beta+\frac{\gamma}{2}}
B(u_0)=4\lambda^{\frac{5}{2}\gamma}B(u_0)
\end{split}
\]
and
\[
\int_{\R^2}\partial_x^{-1}\partial_y^ju_0^{\lambda}(x,y)dxdy
=
\int_{\R^2}\partial_x^{-1}\partial_y^ju_0(x,y)dxdy\ne 0. 
\]
Therefore, 
$\left\|u_0^{\lambda}\right\|_{X^s}$ and $B\left(u_0^{\lambda}\right)$ 
become small if we choose $\lambda >0$ small enough. 
Namely, there exists $\Lambda_j >0$ 
such that for any $v_0\in \left\{u_0^{\lambda}|\ 0<\lambda <\Lambda_j\right\}$ satisfies {\rm (\ref{data-condition})}.
}
\end{rem}

\subsection{Proof of Theorem~\ref{thm.u-asymp}}  

In this subsection, we shall prove Theorem~\ref{thm.u-asymp}. 
To discuss the asymptotic behavior of the solution $u(x, y, t)$, let us rewrite the Cauchy problem \eqref{KPB-s} to the following integral equation: 
\begin{equation}\label{integral-eq}
u(t) = S(t)*u_{0} -\frac{1}{p+1}\int_{0}^{t} \p_{x}S(t-\tau)*\left(u^{p+1}(\tau)\right) d\tau. 
\end{equation}
Since we have already obtained the approximate formula for the linear part of \eqref{integral-eq} (see Corollary~\ref{cor.L-decay-LKPB-ap} in Section~2), it is sufficient to consider only the Duhamel term. Now, we set 
\begin{align}
D_{S}(x, y, t) &:= -\frac{1}{p+1}\int_{0}^{t} \p_{x}S(t-\tau)*\left(u^{p+1}(\tau)\right) d\tau, \label{Duhamel}\\
D_{K}(x, y, t) &:= -\frac{1}{p+1}\int_{0}^{t} \p_{x}K(t-\tau)*\left(u^{p+1}(\tau)\right) d\tau. \label{Duhamel-main}
\end{align}
Then, $D_{S}(x, y, t)$ is well approximated by $D_{K}(x, y, t)$. Actually, we have the following result: 
\begin{prop}\label{prop.Duhamel-asymp}
Under the same assumptions in Theorem~\ref{thm.u-asymp}, we have the following estimate: 
\begin{equation}
\left\|\p_{x}^{l}\p_{y}^{j}\left(D_{S}(\cdot, \cdot, t)-D_{K}(\cdot, \cdot, t)\right)\right\|_{L^{\infty}} \le C t^{-\frac{9}{4}-\frac{l}{2}}, \ \ t\ge2, \label{Duhamel-asymp}
\end{equation}
for all non-negative integer $l$. 
\end{prop}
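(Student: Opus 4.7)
The plan is to start from the identity
\[
\p_{x}^{l}\p_{y}^{j}\bigl(D_{S}-D_{K}\bigr)(x,y,t) = -\frac{1}{p+1}\int_{0}^{t}\p_{x}^{l+1}(S-K)(t-\tau)*\p_{y}^{j}\bigl(u^{p+1}(\tau)\bigr)\,d\tau,
\]
and to split the $\tau$-integral at $\tau=t/2$, treating the two pieces by different means.

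On $[0,t/2]$, I would combine Young's inequality with the pointwise kernel estimate of Proposition~\ref{prop.L-ap-S} to get
\[
\bigl\|\p_{x}^{l+1}(S-K)(t-\tau)*\p_{y}^{j}(u^{p+1}(\tau))\bigr\|_{L^{\infty}} \le C(t-\tau)^{-\frac{9}{4}-\frac{l}{2}}\bigl\|\p_{y}^{j}(u^{p+1}(\tau))\bigr\|_{L^{1}},
\]
and then apply estimate~\eqref{u^p+1-est-1} of Lemma~\ref{lem.u^p+1-est} to bound the $L^{1}$ factor by $C(1+\tau)^{-3/2}$. Since $t-\tau\ge t/2$ throughout, the kernel factor is absorbed as $Ct^{-9/4-l/2}$, and $\int_{0}^{t/2}(1+\tau)^{-3/2}\,d\tau$ is uniformly bounded, so this piece is controlled by $Ct^{-9/4-l/2}$.

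On $[t/2,t]$, the pointwise kernel decay is no longer integrable near $\tau=t$, so I would first commute the $x$-derivatives through the convolution and rewrite the integrand as $(S-K)(t-\tau)*\p_{x}^{l+1}\p_{y}^{j}(u^{p+1}(\tau))$. Applying Lemma~\ref{lem.Duhamel-est} separately to $S$ and $K$ and subtracting, with $f=\p_{x}^{l+1}\p_{y}^{j}(u^{p+1})$ and $a=t/2$, yields a bound of the form
\[
C\sqrt{t/2}\sup_{t/2\le\tau\le t}\Bigl(\bigl\|\p_{x}^{l+1}\p_{y}^{j}(u^{p+1}(\tau))\bigr\|_{L^{1}}+\bigl\|\p_{x}^{l+1}\p_{y}^{j+2}(u^{p+1}(\tau))\bigr\|_{L^{1}}\Bigr).
\]
Estimate~\eqref{u^p+1-est-2} bounds this supremum by $Ct^{-(l+1)/2-3(p+1)/4}$, giving a contribution of $Ct^{1/2-(l+1)/2-3(p+1)/4}=Ct^{-l/2-3(p+1)/4}$. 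Since $p\ge 2$ implies $3(p+1)/4\ge 9/4$, this is $\le Ct^{-9/4-l/2}$, as required.

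The main delicate point is the near-diagonal piece on $[t/2,t]$. Young's inequality alone fails there because $\|\p_{x}^{l+1}(S-K)(t-\tau)\|_{L^{\infty}}$ carries a non-integrable singularity at $\tau=t$; the resolution relies on the $(1+\eta^{2})^{-1}$ trick underlying Lemma~\ref{lem.Duhamel-est}, which produces only an integrable $(t-\tau)^{-1/2}$ singularity at the cost of moving the $x$-derivatives onto $u^{p+1}$. The enhanced temporal decay of Lemma~\ref{lem.u^p+1-est}, available precisely because $p\ge 2$, then absorbs the resulting $\sqrt{t}$ loss. The restriction $t\ge 2$ enters because estimate~\eqref{u^p+1-est-2} is only stated for $\tau\ge 1$, and $[t/2,t]\subset[1,\infty)$ requires $t\ge 2$; adding the two contributions then yields~\eqref{Duhamel-asymp}.
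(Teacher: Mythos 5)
Your proposal is correct and follows essentially the same route as the paper: the same splitting of the Duhamel integral at $\tau=t/2$, Young's inequality with Proposition~\ref{prop.L-ap-S} and \eqref{u^p+1-est-1} on the far piece, and Lemma~\ref{lem.Duhamel-est} combined with \eqref{u^p+1-est-2} on the near-diagonal piece, with the condition $p\ge2$ giving $\frac{3}{4}(p+1)\ge\frac{9}{4}$. Your closing remarks about why Young's inequality fails near $\tau=t$ and where the restriction $t\ge2$ comes from accurately capture the mechanics of the paper's argument.
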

\begin{proof}
First, we split the $\tau$-integral in \eqref{Duhamel} and \eqref{Duhamel-main} as follows: 
\begin{align}
&\p_{x}^{l}\p_{y}^{j}(D_{S}(x, y, t)-D_{K}(x, y, t)) \nonumber \\
&=-\frac{1}{p+1}\int_{0}^{t/2} \p_{x}^{l+1}(S-K)(t-\tau)*\left(\p_{y}^{j}\left(u^{p+1}(\tau)\right)\right) d\tau \nonumber \\
&\ \ \ \, -\frac{1}{p+1}\int_{t/2}^{t} (S-K)(t-\tau)*\left(\p_{x}^{l+1}\p_{y}^{j}\left(u^{p+1}(\tau)\right)\right) d\tau
=:N_{1}(x, y, t)+N_{2}(x, y, t). \label{D-mathD-first}
\end{align}
Then, it follows from the Young inequality, Proposition~\ref{prop.L-ap-S} and \eqref{u^p+1-est-1} that 
\begin{align}
\left\|N_{1}(\cdot, \cdot, t)\right\|_{L^{\infty}}
&\le \frac{1}{p+1}\int_{0}^{t/2}\left\|\p_{x}^{l+1}(S-K)(\cdot, \cdot, t-\tau)\right\|_{L^{\infty}}\left\|\p_{y}^{j}(u^{p+1}(\cdot, \cdot, \tau))\right\|_{L^{1}}d\tau \nonumber \\
&\le C \int_{0}^{t/2}(t-\tau)^{-\frac{7}{4}-\frac{l+1}{2}}(1+\tau)^{-\frac{3}{2}}d\tau 
\le  C t^{-\frac{9}{4}-\frac{l}{2}}, \ \ t>0. \label{N1-est}
\end{align}
On the other hand, applying Lemma~\ref{lem.Duhamel-est} and \eqref{u^p+1-est-2}, we have
\begin{align}
\left\|N_{2}(\cdot, \cdot, t)\right\|_{L^{\infty}}
&\le \frac{1}{p+1}\left\| \int_{t/2}^{t} S(t-\tau)*\left(\p_{x}^{l+1}\p_{y}^{j}\left(u^{p+1}(\tau)\right)\right) d\tau \right\|_{L^{\infty}} \nonumber \\
&\ \ \ \ + \frac{1}{p+1}\left\| \int_{t/2}^{t} K(t-\tau)*\left(\p_{x}^{l+1}\p_{y}^{j}\left(u^{p+1}(\tau)\right)\right) d\tau \right\|_{L^{\infty}} \nonumber\\
&\le C\sqrt{t} \sup_{t/2\le \tau \le t}\left( \left\|\p_{x}^{l+1}\p_{y}^{j}(u^{p+1}(\cdot, \cdot, \tau))\right\|_{L^{1}}+\left\|\p_{x}^{l+1}\p_{y}^{j+2}(u^{p+1}(\cdot, \cdot, \tau))\right\|_{L^{1}} \right) \nonumber\\
&\le C\sqrt{t} \sup_{t/2\le \tau \le t} \tau^{-\frac{l+1}{2}-\frac{3}{4}(p+1)} 
\le Ct^{-\frac{3}{4}(p+1)-\frac{l}{2}}, \ \ t\ge2. \label{N2-est}
\end{align}
Here, we note that $p\ge2$ is equivalent to $\frac{3}{4}(p+1)\ge \frac{9}{4}$. Therefore, combining \eqref{D-mathD-first}, \eqref{N1-est} and \eqref{N2-est}, we obtain the desired estimate \eqref{Duhamel-asymp}. This completes the proof. 
\end{proof}

\begin{proof}[\rm{\bf{End of the Proof of Theorem~\ref{thm.u-asymp}}}]
It follows from \eqref{integral-eq}, \eqref{Duhamel}, \eqref{w-sol} and \eqref{Duhamel-main} that 
\[
u(x, y, t)-w(x, y, t)=\left((S-K)(t)*u_{0}\right)(x, y)+D_{S}(x, y, t)-D_{K}(x, y, t). 
\]
Therefore, by virtue of Corollary~\ref{cor.L-decay-LKPB-ap} and Proposition~\ref{prop.Duhamel-asymp}, we immediately obtain 
\begin{align*}
&\left\|\p_{x}^{l}\p_{y}^{j}\left(u(\cdot, \cdot, t)-w(\cdot, \cdot, t)\right)\right\|_{L^{\infty}} \\
&\le \left\| \p_{x}^{l}\p_{y}^{j}\left((S-K)(t)*u_{0}\right)(\cdot, \cdot) \right\|_{L^{\infty}}+\left\|\p_{x}^{l}\p_{y}^{j}\left(D_{S}(\cdot, \cdot, t)-D_{K}(\cdot, \cdot, t)\right)\right\|_{L^{\infty}}
\le Ct^{-\frac{9}{4}-\frac{l}{2}}, \ \ t\ge2. 
\end{align*}
Thus, we are able to see that the desired estimate \eqref{u-asymp} is true. 
\end{proof}

\subsection{Proof of Theorem~\ref{thm.u-est-lower}}  

Finally in this section, we would like to derive the lower bound of the solution $u(x, y, t)$ to \eqref{KPB-s}. 
By virtue of Theorem~\ref{thm.u-asymp}, we can see that the solution $u(x, y, t)$ is well approximated by $w(x, y, t)$ given by \eqref{w-sol}. 
In addition, it follows from Theorem~\ref{thm.L-ap-LKPB} that the linear part of \eqref{w-sol} converges to $\mathcal{K}_{j}(x, y, t)$ defined by \eqref{DEF-mathK}. Therefore, in order to prove Theorem~\ref{thm.u-est-lower}, it is sufficient to evaluate $\p_{x}^{l}\mathcal{K}_{j}(x, y, t)+\p_{x}^{l}\p_{y}^{j}D_{K}(x, y, t)$ from below. Actually, we can prove the following estimate: 
\begin{prop}\label{prop.w-main-lower}
Under the same assumptions in Theorem~\ref{thm.u-asymp}, there exist positive constants $C_{*}>0$ and $T_{*}>0$ such that the following estimate
\begin{equation}\label{w-main-lower}
\left\|\p_{x}^{l}\mathcal{K}_{j}(\cdot, \cdot, t)+\p_{x}^{l}\p_{y}^{j}D_{K}(\cdot, \cdot, t)\right\|_{L^{\infty}} \ge C_{*}\left|\mathcal{M}_{j}\right|t^{-\frac{7}{4}-\frac{l}{2}}, \ \ t \ge T_{*} 
\end{equation}
holds for all non-negative integer $l$, where $\mathcal{K}_{j}(x, y, t)$, $D_{K}(x, y, t)$ and $\mathcal{M}_{j}$ are defined by \eqref{DEF-mathK}, \eqref{Duhamel-main} and \eqref{DEF-mathM}, respectively. 
\end{prop}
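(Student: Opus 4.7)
My goal is to evaluate the left-hand side of \eqref{w-main-lower} at the origin $(x,y)=(0,0)$ and show that the sum there is asymptotic to $\mathcal{M}\,\p_{x}^{l+1}K(0,0,t)$. Evaluating \eqref{DEF-K*} at $(0,0)$ collapses the phase to $-\pi\e/4$, so the integrand becomes a non-zero multiple of $\cos(\pi\e/4)$ or $\sin(\pi\e/4)$ times $r^{-1/4+(l+1)/2}e^{-r}$; in particular
\[
\kappa_{l}:=|\p_{x}^{l+1}K_{*}(0,0)|>0,\qquad |\p_{x}^{l+1}K(0,0,t)|=\kappa_{l}\,t^{-7/4-l/2}
\]
by the scaling \eqref{DEF-K}. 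The target identity is
\[
\p_{x}^{l}\mathcal{K}_{j}(0,0,t)+\p_{x}^{l}\p_{y}^{j}D_{K}(0,0,t)=\mathcal{M}\,\p_{x}^{l+1}K(0,0,t)+o(t^{-7/4-l/2})\quad\text{as } t\to\infty,
\]
from which \eqref{w-main-lower} follows with $C_{*}=\kappa_{l}/2$ by taking $T_{*}$ large enough to absorb the $o$-term into half of the main contribution.

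\textbf{Linear piece.} Imitating the argument of Theorem~\ref{thm.L-ap-LKPB} but now in the single variable $w$, I would write the defect from $\bigl(\int_{\R}M_{j}\,dw\bigr)\,\p_{x}^{l+1}K(0,0,t)$ as
\[
\int_{\R}\bigl[\p_{x}^{l+1}K(0,-w,t)-\p_{x}^{l+1}K(0,0,t)\bigr]M_{j}(w)\,dw,
\]
and split $\int_{|w|\le L}+\int_{|w|>L}$. On the inner piece, the mean value theorem in $y$ together with \eqref{DEF-K} yields
\[
|\p_{y}\p_{x}^{l+1}K(0,-\theta w,t)|=t^{-5/2-l/2}|\p_{y}\p_{x}^{l+1}K_{*}(0,-\theta w\,t^{-3/4})|=O(t^{-5/2-l/2})
\]
uniformly for $|w|\le L$ and large $t$ since $\p_{y}\p_{x}^{l+1}K_{*}$ is continuous at $0$, producing an $O(Lt^{-5/2-l/2})=o(t^{-7/4-l/2})$ contribution. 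On the outer piece, Proposition~\ref{prop.L-decay-K} and the $L^{1}$-smallness $\|M_{j}\mathbf{1}_{|w|>L}\|_{L^{1}}<\e_{0}$ give $O(\e_{0}\,t^{-7/4-l/2})$ with $\e_{0}$ arbitrary. Since $\int_{\R}M_{j}(w)\,dw=\int_{\R^{2}}\p_{x}^{-1}\p_{y}^{j}u_{0}\,dxdy$ by \eqref{M(y)}, this recovers exactly the first summand of $\mathcal{M}$.

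\textbf{Duhamel piece.} Split $\int_{0}^{t}=\int_{0}^{t/2}+\int_{t/2}^{t}$. The $[t/2,t]$-portion of $\p_{x}^{l}\p_{y}^{j}D_{K}(0,0,t)$ is handled exactly as the $N_{2}$-estimate in Proposition~\ref{prop.Duhamel-asymp}: moving $\p_{x}^{l+1}\p_{y}^{j}$ onto $u^{p+1}$ and applying Lemma~\ref{lem.Duhamel-est} with \eqref{u^p+1-est-2} gives $O(t^{-3(p+1)/4-l/2})=o(t^{-7/4-l/2})$ since $p\ge2$; and the corresponding subtracted tail $\p_{x}^{l+1}K(0,0,t)\int_{t/2}^{\infty}\!\int_{\R^{2}}\p_{y}^{j}u^{p+1}\,dzdwd\tau$ is $O(t^{-9/4-l/2})$ by \eqref{u^p+1-est-1}. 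On $[0,t/2]$, I would apply dominated convergence to
\[
t^{7/4+l/2}\int_{0}^{t/2}\!\!\int_{\R^{2}}\!\bigl[\p_{x}^{l+1}K(-z,-w,t-\tau)-\p_{x}^{l+1}K(0,0,t)\bigr]\p_{y}^{j}u^{p+1}(z,w,\tau)\,dzdwd\tau.
\]
Pointwise in $(z,w,\tau)$ the bracket tends to $0$ after multiplication by $t^{7/4+l/2}$, because by \eqref{DEF-K},
\[
t^{7/4+l/2}\p_{x}^{l+1}K(-z,-w,t-\tau)=\Bigl(1-\tfrac{\tau}{t}\Bigr)^{-7/4-l/2}\p_{x}^{l+1}K_{*}\!\Bigl(\tfrac{-z}{\sqrt{t-\tau}},\tfrac{-w}{(t-\tau)^{3/4}}\Bigr)\longrightarrow\p_{x}^{l+1}K_{*}(0,0)
\]
by continuity of $\p_{x}^{l+1}K_{*}$ at $(0,0)$. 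Simultaneously, Proposition~\ref{prop.L-decay-K} applied with $t-\tau\ge t/2$ dominates the bracket by a constant, producing the integrable majorant $C|\p_{y}^{j}u^{p+1}(z,w,\tau)|$ on $[0,\infty)\times\R^{2}$ by Corollary~\ref{M-uniform}. The emerging coefficient $-\tfrac{1}{p+1}\int_{0}^{\infty}\!\int_{\R^{2}}\p_{y}^{j}u^{p+1}\,dzdwd\tau$ matches the second summand of $\mathcal{M}$ in \eqref{DEF-mathM}.

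\textbf{Main obstacle.} The principal difficulty I anticipate is the dominated-convergence step for the Duhamel term on $[0,t/2]$: the scaled arguments of $K_{*}$ and the outer factor $(1-\tau/t)^{-7/4-l/2}$ depend non-trivially on both $t$ and $\tau$, so care is needed to verify the pointwise limit while retaining the $t$-independent integrable majorant from Corollary~\ref{M-uniform}. Once this estimate is established, summing the linear and Duhamel contributions reconstructs exactly $\mathcal{M}\,\p_{x}^{l+1}K(0,0,t)+o(t^{-7/4-l/2})$, whence \eqref{w-main-lower} follows at the point $(x,y)=(0,0)$.
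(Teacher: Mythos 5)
Your proposal is correct and follows the same strategy as the paper: evaluate the sum at the origin, show that its leading behavior is $\mathcal{M}\,\p_{x}^{l+1}K(0,0,t)$, and exploit the fact that $\p_{x}^{l+1}K_{*}(0,0)$ is a nonzero multiple of $\cos\bigl(-\tfrac{\pi}{4}\e+\tfrac{(l+1)\pi}{2}\bigr)=\pm\tfrac{1}{\sqrt{2}}$ (this is exactly the paper's constant $c_{0}|\cos\alpha|\,\Gamma(\tfrac{l}{2}+\tfrac{5}{4})$). The only real difference is in how the error terms are controlled: the paper introduces a time-splitting parameter $\delta$ and a spatial scale $\kappa t^{3/4}$, proves $\limsup$ bounds proportional to $\delta$ and $\kappa^{2}$, and then chooses both parameters proportional to $|\mathcal{M}|$, whereas you compare every kernel directly to $K(0,0,t)$ and invoke dominated convergence against the majorant supplied by Corollary~\ref{M-uniform}; your route yields the genuinely stronger statement that $t^{7/4+l/2}\bigl(\p_{x}^{l}\mathcal{K}_{j}+\p_{x}^{l}\p_{y}^{j}D_{K}\bigr)(0,0,t)$ converges to $\mathcal{M}\,\p_{x}^{l+1}K_{*}(0,0)$, at the cost of being less explicit about how $T_{*}$ depends on the data. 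Both arguments rest on the same ingredients (Proposition~\ref{prop.L-decay-K}, Lemmas~\ref{lem.Duhamel-est} and \ref{lem.u^p+1-est}, and the integrability of $\p_{y}^{j}(u^{p+1})$ in space-time), and I see no gap in yours.
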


To prove Proposition~\ref{prop.w-main-lower}, we start with analyzing $\p_{x}^{l}\p_{y}^{j}D_{K}(x, y, t)$ in details. 
Now, we take $0<\delta<1$ and then split the integral of the right hand side of $\p_{x}^{l}\p_{y}^{j}D_{K}(x, y, t)$ as follows:
\begin{align}
&\p_{x}^{l}\p_{y}^{j}D_{K}(x, y, t) = -\frac{1}{p+1}\int_{0}^{t} \p_{x}^{l}\left\{ \p_{x}K(t-\tau)* \left(\partial_y^j\left(u^{p+1}(\tau)\right)\right)\right\} d\tau \nonumber \\
&= -\frac{1}{p+1}\int_{\frac{\delta t}{2}}^{t} K(t-\tau)* \left(\p_{x}^{l+1}\p_{y}^{j}\left(u^{p+1}(\tau)\right)\right) d\tau \nonumber\\
&\ \ \ \, -\frac{1}{p+1}\int_{0}^{\frac{\delta t}{2}}\int_{\R^{2}} \p_{x}^{l+1}\left( K(x-z, y-w, t-\tau)-K(x, y-w, t-\tau) \right) \p_{y}^{j}\left(u^{p+1}(z, w, \tau)\right) dzdwd\tau \nonumber\\
&\ \ \ \, -\frac{1}{p+1}\int_{0}^{\frac{\delta t}{2}}\int_{\R^{2}} \p_{x}^{l+1}K(x, y-w, t-\tau) \p_{y}^{j}\left(u^{p+1}(z, w, \tau)\right) dzdwd\tau \nonumber \\
&=: R_{1}^{\delta}(x, y, t)+R_{2}^{\delta}(x, y, t)+L^{\delta}(x, y, t). \label{D_K-split-R+L}
\end{align}
For the above $R_{1}^{\delta}(x, y, t)$ and $R_{2}^{\delta}(x, y, t)$, we are able to prove the following lemma: 
\begin{lem}\label{lem.R1-R2-est}
Under the same assumptions in Theorem~\ref{thm.u-asymp}, we have 
\begin{equation}\label{R1-R2-est}
\lim_{t \to \infty}t^{\frac{7}{4}+\frac{l}{2}}\left\|R_{i}^{\delta}(\cdot, \cdot, t)\right\|_{L^{\infty}}=0, \ \ i=1, 2.
\end{equation}
\end{lem}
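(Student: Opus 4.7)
The plan is to dispatch $R_1^{\delta}$ and $R_2^{\delta}$ by two very different routes, both ultimately yielding decay faster than $t^{-7/4-l/2}$. For $R_1^{\delta}$, the late-time contribution where $\tau\in[\delta t/2,t]$, I would apply the Duhamel estimate \eqref{Duhamel-K-est} of Lemma~\ref{lem.Duhamel-est} with $f=\p_x^{l+1}\p_y^{j}(u^{p+1})$ and $a=\delta t/2$, then insert the nonlinear $L^{1}$-bound \eqref{u^p+1-est-2} from Lemma~\ref{lem.u^p+1-est}. This is exactly the computation already carried out for $N_2$ in the proof of Proposition~\ref{prop.Duhamel-asymp} and produces
\[
\|R_1^{\delta}(\cdot,\cdot,t)\|_{L^{\infty}}
\le C\sqrt{t}\sup_{\delta t/2\le\tau\le t}\tau^{-\frac{l+1}{2}-\frac{3(p+1)}{4}}
\le Ct^{-\frac{l}{2}-\frac{3(p+1)}{4}}.
\]
Since $p\ge2$ forces $\frac{3(p+1)}{4}\ge\frac{9}{4}>\frac{7}{4}$, multiplying by $t^{7/4+l/2}$ yields a strictly negative power of $t$ and the claim for $i=1$ follows.

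For $R_2^{\delta}$ I would mirror the spatial-splitting argument that proved Theorem~\ref{thm.L-ap-LKPB}. The essential input is that Corollary~\ref{M-uniform} supplies $\p_y^{j}(u^{p+1})\in L^{1}(\R^{2}\times(0,\infty))$. Therefore, given $\e_{0}>0$, one can fix $L=L(\e_{0})>0$ with
\[
\int_{0}^{\infty}\int_{|(z,w)|\ge L}\left|\p_y^{j}u^{p+1}(z,w,\tau)\right|dzdwd\tau<\e_{0}.
\]
Now split the $(z,w)$-integral in $R_2^{\delta}$ at $|(z,w)|=L$. On the near region, apply the mean value theorem to write
\[
K(x-z,y-w,t-\tau)-K(x,y-w,t-\tau)=-z\,\p_{x}K(x-\theta z,y-w,t-\tau),
\]
bound $\|\p_x^{l+2}K(\cdot,\cdot,t-\tau)\|_{L^{\infty}}\le C(t-\tau)^{-9/4-l/2}\le Ct^{-9/4-l/2}$ via Proposition~\ref{prop.L-decay-K} and the lower bound $t-\tau\ge t/2$ on $[0,\delta t/2]$, and control $\|\p_y^{j}u^{p+1}(\cdot,\cdot,\tau)\|_{L^{1}}$ through \eqref{u^p+1-est-1}; the resulting $\tau$-integral converges and yields a near-part contribution bounded by $CLt^{-9/4-l/2}$. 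On the far region, the triangle inequality replaces the kernel difference by the sum $|\p_x^{l+1}K(x-z,\cdot)|+|\p_x^{l+1}K(x,\cdot)|\le Ct^{-7/4-l/2}$, so the spatial-temporal tail integral directly gives a far-part contribution bounded by $C\e_{0}t^{-7/4-l/2}$.

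Combining the two pieces produces
\[
t^{\frac{7}{4}+\frac{l}{2}}\|R_2^{\delta}(\cdot,\cdot,t)\|_{L^{\infty}}\le CLt^{-\frac{1}{2}}+C\e_{0},\quad t\ge 1,
\]
so $\limsup_{t\to\infty}t^{7/4+l/2}\|R_2^{\delta}\|_{L^{\infty}}\le C\e_{0}$, and arbitrariness of $\e_{0}$ closes the argument. The only delicate point, and the step I expect to be the principal obstacle, is recognising that Corollary~\ref{M-uniform} delivers a joint space-time $L^{1}$ integrability that is strong enough to mimic the tail bound \eqref{L1-far} uniformly in $\tau$; without that uniformisation one cannot exchange the spatial tail smallness for the $\e_{0}$ in the far-region estimate, and the whole reduction to the Theorem~\ref{thm.L-ap-LKPB} strategy would break down.
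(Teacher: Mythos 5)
Your argument is correct. For $i=1$ it coincides with the paper's proof verbatim: both apply Lemma~\ref{lem.Duhamel-est} with \eqref{u^p+1-est-2} on $[\delta t/2,t]$ and use $\frac{3}{4}(p+1)\ge\frac{9}{4}>\frac{7}{4}$. For $i=2$ you take a genuinely different, though closely related, route. The paper splits only the $z$-integral at a \emph{time-dependent} threshold $|z|=t^{\alpha}$ with $0<\alpha<\frac{1}{2}$: on $|z|\le t^{\alpha}$ the mean value theorem gives the factor $|z|\le t^{\alpha}$ and hence the bound $Ct^{\alpha-\frac{9}{4}-\frac{l}{2}}=o(t^{-\frac{7}{4}-\frac{l}{2}})$, while on $|z|\ge t^{\alpha}$ the tail $\int_{0}^{\infty}\int_{\R^{2}}\mathbbm{1}_{\{|z|\ge t^{\alpha}\}}|\p_{y}^{j}(u^{p+1})|$ is sent to zero by dominated convergence. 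You instead split $(z,w)$ jointly at a \emph{fixed} radius $L=L(\e_{0})$ chosen via the absolute continuity of the space-time integral, exactly as in the proof of Theorem~\ref{thm.L-ap-LKPB}, obtaining $CLt^{-\frac{9}{4}-\frac{l}{2}}+C\e_{0}t^{-\frac{7}{4}-\frac{l}{2}}$ and then letting $\e_{0}\to0$ after the $\limsup$. Both arguments rest on the same key input, the joint integrability $\p_{y}^{j}(u^{p+1})\in L^{1}(\R^{2}\times(0,\infty))$ from Corollary~\ref{M-uniform}, which you correctly identified as the crux; your version has the aesthetic advantage of reusing the Theorem~\ref{thm.L-ap-LKPB} template and needing no auxiliary parameter $\alpha$, while the paper's version avoids the $\e_{0}$-bookkeeping by producing a quantitative $o(1)$ rate in the near region. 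One cosmetic remark: for $i=1$ the estimate \eqref{u^p+1-est-2} requires $\tau\ge1$, hence the restriction $t\ge 2/\delta$ as in the paper; this is harmless for the limit but worth stating.
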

\begin{proof}
First, we shall prove \eqref{R1-R2-est} for $i=1$. Applying Lemmas~\ref{lem.Duhamel-est} and \ref{lem.u^p+1-est}, we obtain
\begin{align*}
\left\|R_{1}^{\delta}(\cdot, \cdot, t)\right\|_{L^{\infty}}
&\le C\sqrt{t} \sup_{\frac{\delta t}{2} \le \tau \le t}\left( \left\|\p_{x}^{l+1}\p_{y}^{j}(u^{p+1}(\cdot, \cdot, \tau))\right\|_{L^{1}}+\left\|\p_{x}^{l+1}\p_{y}^{j+2}(u^{p+1}(\cdot, \cdot, \tau))\right\|_{L^{1}} \right) \\
&\le C\sqrt{t} \sup_{\frac{\delta t}{2} \le \tau \le t} \tau^{-\frac{l+1}{2}-\frac{3}{4}(p+1)} 
\le Ct^{-\frac{3}{4}(p+1)-\frac{l}{2}}, \ \ t\ge \frac{2}{\delta}. 
\end{align*}
Therefore, it follows from $p\ge2$ that 
\[
\limsup_{t \to \infty}t^{\frac{7}{4}+\frac{l}{2}}\left\|R_{1}^{\delta}(\cdot, \cdot, t)\right\|_{L^{\infty}} 
\le C \lim_{t\to \infty} t^{-\frac{3}{4}(p-\frac{4}{3})}=0. 
\]
This means that the desired result \eqref{R1-R2-est} holds for $i=1$. 

Next, we would like to show that \eqref{R1-R2-est} is true for $i=2$. Now, let us take a constant $\alpha>0$ and split the $z$-integral in $R_{2}^{\delta}(x, y, t)$ as follows: 
\begin{align}
R_{2}^{\delta}(x, y, t)&=-\frac{1}{p+1}\int_{0}^{\frac{\delta t}{2}}\int_{\R}\int_{|z|\le t^{\alpha}}\p_{x}^{l+1}\tilde{K}(x, z, y-w, t-\tau)\p_{y}^{j}\left(u^{p+1}(z, w, \tau)\right)dzdwd\tau \nonumber \\
&\ \ \ \, -\frac{1}{p+1}\int_{0}^{\frac{\delta t}{2}}\int_{\R}\int_{|z|\ge t^{\alpha}}\p_{x}^{l+1}\tilde{K}(x, z, y-w, t-\tau)\p_{y}^{j}\left(u^{p+1}(z, w, \tau)\right)dzdwd\tau \nonumber \\
&=: R_{2.1}^{\delta}(x, y, t)+R_{2.2}^{\delta}(x, y, t), \label{R2-split}
\end{align}
where $\tilde{K}(x, z, y-w, t-\tau)$ is defined by 
\[
\tilde{K}(x, z, y-w, t-\tau):=K(x-z, y-w, t-\tau)-K(x, y-w, t-\tau). 
\]

For $R_{2.1}^{\delta}(x, y, t)$, from the mean value theorem, there exists $\theta=\theta(x, z) \in (0, 1)$ such that 
\[
\p_{x}^{l+1}\tilde{K}(x, z, y-w, t-\tau)=-z(\p_{x}^{l+2}K)(x-\theta z, y-w, t-\tau)
\]
Therefore, it follows from Proposition~\ref{prop.L-decay-K}, Corollary~\ref{M-uniform} and $0<\delta<1$ that 
\begin{align}
\left|R_{2.1}^{\delta}(x, y, t)\right| 
&\le \frac{1}{p+1}\int_{0}^{\frac{\delta t}{2}}\int_{\R}\int_{|z|\le t^{\alpha}} |z| \left|\p_{x}^{l+2}K(x-\theta z, y-w, t-\tau)\right| \left|\p_{y}^{j}\left(u^{p+1}(z, w, \tau)\right)\right| dzdwd\tau \nonumber \\
&\le C\int_{0}^{\frac{\delta t}{2}}\int_{\R}\int_{|z|\le t^{\alpha}} t^{\alpha} (t-\tau)^{-\frac{5}{4}-\frac{l+2}{2}} \left|\p_{y}^{j}\left(u^{p+1}(z, w, \tau)\right)\right| dzdwd\tau \nonumber \\
&\le C\left( 1-\frac{\delta}{2} \right)^{-\frac{5}{4}-\frac{l+2}{2}} t^{\alpha-\frac{5}{4}-\frac{l+2}{2}}\int_{0}^{\infty}\int_{\R^{2}}\left|\p_{y}^{j}\left(u^{p+1}(z, w, \tau)\right)\right| dzdwd\tau \nonumber \\
&\le Ct^{\alpha-\frac{9}{4}-\frac{l}{2}}, \ \ (x, y) \in \R^{2}, \ t>0. \nonumber 
\end{align}
Thus, taking $\alpha>0$ to be $\alpha<\frac{1}{2}$, we obtain 
\begin{equation}\label{R2.1-est}
\limsup_{t \to \infty}t^{\frac{7}{4}+\frac{l}{2}}\left\|R_{2.1}^{\delta}(\cdot, \cdot, t)\right\|_{L^{\infty}} 
\le C \lim_{t\to \infty} t^{-(\frac{1}{2}-\alpha)}=0. 
\end{equation}

Analogously, for $R_{2.2}^{\delta}(x, y, t)$, we have 
\begin{align}
\left|R_{2.2}^{\delta}(x, y, t)\right| 
&\le \frac{1}{p+1}\int_{0}^{\frac{\delta t}{2}}\int_{\R}\int_{|z|\ge t^{\alpha}} \left|\p_{x}^{l+1}\tilde{K}(x, z, y-w, t-\tau)\right| \left|\p_{y}^{j}\left(u^{p+1}(z, w, \tau)\right)\right| dzdwd\tau \nonumber \\
&\le C\int_{0}^{\frac{\delta t}{2}}\int_{\R}\int_{|z|\ge t^{\alpha}} (t-\tau)^{-\frac{5}{4}-\frac{l+1}{2}} \left|\p_{y}^{j}\left(u^{p+1}(z, w, \tau)\right)\right| dzdwd\tau \nonumber \\
&\le C\left( 1-\frac{\delta}{2} \right)^{-\frac{5}{4}-\frac{l+1}{2}} t^{-\frac{5}{4}-\frac{l+1}{2}}\int_{0}^{\infty}\int_{\R^{2}} \mathbbm{1}_{\{|z|\ge t^{\alpha}\}}(z) \left|\p_{y}^{j}\left(u^{p+1}(z, w, \tau)\right)\right| dzdwd\tau \nonumber  \\
&\le Ct^{-\frac{7}{4}-\frac{l}{2}}\int_{0}^{\infty}\int_{\R^{2}} \mathbbm{1}_{\{|z|\ge t^{\alpha}\}}(z) \left|\p_{y}^{j}\left(u^{p+1}(z, w, \tau)\right)\right| dzdwd\tau, \ \ (x, y) \in \R^{2}, \ t>0. \nonumber 
\end{align}
Thus, from Corollary~\ref{M-uniform}, applying the Lebesgue dominated convergence theorem, we can see that 
\begin{align}
&\limsup_{t \to \infty}t^{\frac{7}{4}+\frac{l}{2}}\left\|R_{2.2}^{\delta}(\cdot, \cdot, t)\right\|_{L^{\infty}} \nonumber \\
&\le C \lim_{t\to \infty} \int_{0}^{\infty}\int_{\R^{2}} \mathbbm{1}_{\{|z|\ge t^{\alpha}\}}(z) \left|\p_{y}^{j}\left(u^{p+1}(z, w, \tau)\right)\right| dzdwd\tau=0. \label{R2.2-est}
\end{align}
Summing up \eqref{R2-split}, \eqref{R2.1-est} and \eqref{R2.2-est}, the desired estimate \eqref{R1-R2-est} for $i=2$ has been proven. 
\end{proof}

From Lemma~\ref{lem.R1-R2-est}, in order to prove Proposition~\ref{prop.w-main-lower}, it is sufficient to show 
\begin{equation*}
\left\|\p_{x}^{l}\mathcal{K}_{j}(\cdot, \cdot, t)+L^{\delta}(\cdot, \cdot, t)\right\|_{L^{\infty}} \ge \tilde{C}\left|\mathcal{M}_{j}\right|t^{-\frac{7}{4}-\frac{l}{2}}
\end{equation*}
for some positive constant $\tilde{C}$ and sufficiently large $t>0$, where $\mathcal{K}_{j}(x, y, t)$ and $L^{\delta}(x, y, t)$ are defined by \eqref{DEF-mathK} and \eqref{D_K-split-R+L}, respectively. In particular, it is enough to evaluate $(\p_{x}^{l}\mathcal{K}_{j}+L^{\delta})(0, 0, t)$ from below. Now, for the simplicity, we set 
\begin{align}
&U_{j}(w, \tau):=-\frac{1}{p+1}\int_{\R}\p_{y}^{j}\left( u^{p+1}(z, w, \tau) \right)dz, \ \ w \in \R, \ \tau>0, \label{DEF-Uj} \\
&Q_{j}(w):=M_{j}(w)+\int_{0}^{\infty}U_{j}(w, \tau) d\tau, \ \ w \in \R, \label{DEF-Qj}
\end{align} 
where $M_{j}(w)$ is defined by \eqref{M(y)}. Under the above notations, we get the following facts: 
\begin{lem}\label{lem.Uj-L1}
Under the same assumptions in Theorem~\ref{thm.u-asymp}, we have 
\begin{equation}\label{Uj-L1}
U_{j} \in L^{1}\left( \R \times [0, \infty) \right), \quad M_{j} \in L^{1}(\R), \quad Q_{j} \in L^{1}(\R). 
\end{equation}
In particular, the following relation holds: 
\begin{equation}\label{Uj-M}
\mathcal{M}_{j}=\int_{\R}Q_{j}(w)dw=\int_{\R}M_{j}(w)dw+\int_{0}^{\infty}\int_{\R}U_{j}(w, \tau)dwd\tau, 
\end{equation}
where $\mathcal{M}_{j}$ is defined by \eqref{DEF-mathM}. 
\end{lem}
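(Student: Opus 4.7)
The plan is to read off the three $L^{1}$-statements directly from the definitions together with the $L^{1}$-estimates already established, and then obtain the identity for $\mathcal{M}$ via Fubini's theorem. The argument is essentially bookkeeping; no new estimate is needed.

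First, $M_{j} \in L^{1}(\R)$ is immediate from the standing hypothesis $\p_{x}^{-1}\p_{y}^{j}u_{0} \in L^{1}(\R^{2})$: by Fubini, the $x$-section $\p_{x}^{-1}\p_{y}^{j}u_{0}(\cdot, y)$ lies in $L^{1}(\R)$ for a.e.~$y$, and
\[
\int_{\R} |M_{j}(y)|\,dy
\le \int_{\R}\int_{\R} \left|\p_{x}^{-1}\p_{y}^{j}u_{0}(x, y)\right| dx\,dy
= \left\|\p_{x}^{-1}\p_{y}^{j}u_{0}\right\|_{L^{1}}.
\]

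Second, for $U_{j} \in L^{1}(\R \times [0, \infty))$ I would estimate
\[
\int_{0}^{\infty}\!\int_{\R} |U_{j}(w, \tau)|\,dw\,d\tau
\le \frac{1}{p+1}\int_{0}^{\infty}\!\int_{\R^{2}} \left|\p_{y}^{j}\left(u^{p+1}(x, y, \tau)\right)\right| dx\,dy\,d\tau,
\]
which is finite by Corollary~\ref{M-uniform} (that is, \eqref{u^p+1-est-1} gives $\|\p_{y}^{j}(u^{p+1}(\tau))\|_{L^{1}} \le C(1+\tau)^{-3/2}$, which is integrable on $[0, \infty)$). Then $Q_{j} \in L^{1}(\R)$ follows by combining these two facts: by Fubini applied to $U_{j} \in L^{1}(\R \times [0, \infty))$, the function $\int_{0}^{\infty} U_{j}(\cdot, \tau)\,d\tau$ is well-defined a.e.~and belongs to $L^{1}(\R)$, so
\[
\|Q_{j}\|_{L^{1}} \le \|M_{j}\|_{L^{1}} + \int_{\R}\int_{0}^{\infty} |U_{j}(w, \tau)|\,d\tau\,dw < \infty.
\]

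Finally, the identity \eqref{Uj-M} comes from substituting the definitions \eqref{DEF-Uj}, \eqref{DEF-Qj} and \eqref{M(y)} into \eqref{DEF-mathM} and interchanging the order of integration, which is justified by the absolute convergences just proved. The first term of \eqref{DEF-mathM} equals $\int_{\R} M_{j}(w)\,dw$ by Fubini, while the second equals $\int_{0}^{\infty}\int_{\R} U_{j}(w, \tau)\,dw\,d\tau$; summing gives $\mathcal{M} = \int_{\R} Q_{j}(w)\,dw$. The only possible obstacle is ensuring absolute integrability before swapping integrals, but this is exactly what the first part of the lemma has established, so there is in fact no analytical difficulty.
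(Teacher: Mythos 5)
Your proposal is correct and follows exactly the same route as the paper's (much terser) proof: the $L^{1}$-memberships are read off from the hypothesis $\p_{x}^{-1}\p_{y}^{j}u_{0}\in L^{1}(\R^{2})$ and Corollary~\ref{M-uniform} via Fubini, and the identity \eqref{Uj-M} follows by substituting the definitions into \eqref{DEF-mathM}. Your write-up simply makes explicit the bookkeeping that the paper leaves implicit.
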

\begin{proof}
From the assumptions on the initial data $u_{0}$ and Corollary~\ref{M-uniform}, we immediately have \eqref{Uj-L1}. 
The second Eq.~\eqref{Uj-M} directly follows from the definition of $\mathcal{M}_{j}$. 
\end{proof}
In what follows, let us deal with $(\p_{x}^{l}\mathcal{K}_{j}+L^{\delta})(0, 0, t)$. By using the above notations, we have 
\begin{align}
&(\p_{x}^{l}\mathcal{K}_{j}+L^{\delta})(0, 0, t) \nonumber \\
&=\int_{\R}\left( \p_{x}^{l+1}K(0, -w, t)\right)M_{j}(w)dw + \int_{0}^{\frac{\delta t}{2}}\int_{\R}\left( \p_{x}^{l+1}K(0, -w, t-\tau)\right)U_{j}(w, \tau)dw d\tau \nonumber \\
&=\left\{ \int_{\R}\left( \p_{x}^{l+1}K(0, -w, t)\right)M_{j}(w)dw + \int_{0}^{\frac{\delta t}{2}}\int_{\R}\left( \p_{x}^{l+1}K(0, -w, t)\right)U_{j}(w, \tau)dw d\tau \right\}\nonumber \\
&\ \ \ \ +\int_{0}^{\frac{\delta t}{2}}\int_{\R}\left( \p_{x}^{l+1}K(0, -w, t-\tau)-\p_{x}^{l+1}K(0, -w, t) \right)U_{j}(w, \tau)dw d\tau \nonumber \\
&=\int_{\R}\p_{x}^{l+1}K(0, -w, t)Q_{j}(w)dw - \int_{\frac{\delta t}{2}}^{\infty}\int_{\R}\left( \p_{x}^{l+1}K(0, -w, t) \right) U_{j}(w, \tau)dwd\tau \nonumber \\
&\ \ \ \ +\int_{0}^{\frac{\delta t}{2}}\int_{\R}\left( \p_{x}^{l+1}K(0, -w, t-\tau)-\p_{x}^{l+1}K(0, -w, t) \right)U_{j}(w, \tau)dw d\tau \nonumber \\
&=:I_{1}(t)-I_{2}^{\delta}(t)+I_{3}^{\delta}(t). \label{mathK+L-split}
\end{align}
Here, from the definition of $K(x, y, t)$ (i.e. \eqref{DEF-K}), we remark that 
\begin{align}
\tilde{K}(w, t)
:=&\, \p_{x}^{l+1}K(0, -w, t) \nonumber \\
=&\, \p_{x}^{l+1}\left\{ \frac{  t^{ -\frac{5}{4} } }{ 4\pi^{ \frac{3}{2} } \nu^{\frac{3}{4}}} \int_{0}^{\infty} r^{-\frac{1}{4} }e^{ -r } \cos \left( x \sqrt{\frac{r}{\nu t}} + \frac{y^{2}}{4\e}\sqrt{\frac{r}{\nu}}t^{-\frac{3}{2}} - \frac{\pi}{4}\e \right) dr \right\}\Biggl|_{x=0,\, y=-w} \nonumber \\
=&\, \frac{  t^{ -\frac{7}{4}-\frac{l}{2} } }{ 4\pi^{ \frac{3}{2} } \nu^{\frac{l}{2}+\frac{5}{4}}} \int_{0}^{\infty} r^{\frac{l}{2}+\frac{1}{4} }e^{ -r } \cos \left( \frac{w^{2}}{4\e}\sqrt{\frac{r}{\nu}}t^{-\frac{3}{2}} - \frac{\pi}{4}\e +\frac{(l+1)\pi}{2} \right) dr. \label{K-tilde}
\end{align}
Moreover, in what follows, we set 
\begin{equation}\label{DEF-Theta}
c_{0}:=\frac{1}{ 4\pi^{ \frac{3}{2} } \nu^{\frac{l}{2}+\frac{5}{4}}}, \quad 
\Theta(w, t, r):=\frac{w^{2}}{4\e}\sqrt{\frac{r}{\nu}}t^{-\frac{3}{2}} - \frac{\pi}{4}\e +\frac{(l+1)\pi}{2}. 
\end{equation}
Now, let us evaluate $I_{1}(t)$, $I_{2}^{\delta}(t)$ and $I_{3}^{\delta}(t)$. For these terms, we can establish the following lemmas: 
\begin{lem}\label{lem.I2delta-est}
Under the same assumptions in Theorem~\ref{thm.u-asymp}, we have 
\begin{equation}\label{I2delta-est}
\lim_{t \to \infty}t^{\frac{7}{4}+\frac{l}{2}}\left|I_{2}^{\delta}(t)\right|=0. 
\end{equation}
\end{lem}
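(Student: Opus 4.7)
The plan is to exploit the fact that $U_{j}$ is integrable over $\R \times [0, \infty)$ (established in Lemma~\ref{lem.Uj-L1}) together with a uniform-in-$w$ pointwise bound for $\tilde{K}(w, t)$, so that the extra factor $t^{7/4 + l/2}$ is exactly absorbed by the decay of $\tilde{K}$, and what remains is a tail of a convergent integral in $\tau$ that tends to zero.

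First, I would observe from the explicit formula \eqref{K-tilde} that, since $|\cos(\Theta(w,t,r))| \le 1$ uniformly in $w \in \R$, $t>0$ and $r>0$, one has
\begin{equation*}
|\tilde{K}(w,t)| \le c_{0}\, t^{-\frac{7}{4}-\frac{l}{2}} \int_{0}^{\infty} r^{\frac{l}{2}+\frac{1}{4}} e^{-r}\, dr = C\, t^{-\frac{7}{4}-\frac{l}{2}}, \quad w \in \R,\ t>0,
\end{equation*}
where $C$ depends only on $l$ and $\nu$. Plugging this pointwise bound into the definition of $I_{2}^{\delta}(t)$ and using Fubini yields
\begin{equation*}
|I_{2}^{\delta}(t)| \le C\, t^{-\frac{7}{4}-\frac{l}{2}} \int_{\frac{\delta t}{2}}^{\infty} \int_{\R} |U_{j}(w,\tau)|\, dw\, d\tau.
\end{equation*}

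Next, by Lemma~\ref{lem.Uj-L1}, $U_{j} \in L^{1}(\R \times [0,\infty))$, so the function $\tau \mapsto \|U_{j}(\cdot, \tau)\|_{L^{1}(\R)}$ belongs to $L^{1}([0,\infty))$ by Fubini. Consequently the tail integral $\int_{\delta t/2}^{\infty} \|U_{j}(\cdot,\tau)\|_{L^{1}}\, d\tau$ tends to $0$ as $t \to \infty$. Multiplying the displayed bound by $t^{7/4 + l/2}$ gives
\begin{equation*}
t^{\frac{7}{4}+\frac{l}{2}}|I_{2}^{\delta}(t)| \le C \int_{\frac{\delta t}{2}}^{\infty} \int_{\R} |U_{j}(w,\tau)|\, dw\, d\tau \longrightarrow 0 \quad (t \to \infty),
\end{equation*}
which is the conclusion \eqref{I2delta-est}.

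There is no real obstacle here: once the uniform pointwise bound on $\tilde{K}$ is noted, the scaling in $t$ balances exactly, and the rest is integrability of $U_{j}$ together with the tail-of-$L^{1}$ argument. The only point requiring a touch of care is the interchange of integrals ensuring that the double tail integral is well-defined and small, but this follows directly from $U_{j} \in L^{1}(\R \times [0,\infty))$ via Fubini.
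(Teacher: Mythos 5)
Your proof is correct and follows essentially the same route as the paper: a uniform-in-$w$ bound $|\tilde{K}(w,t)|\le c_{0}\,\Gamma(\tfrac{l}{2}+\tfrac{5}{4})\,t^{-\frac{7}{4}-\frac{l}{2}}$ from \eqref{K-tilde}, followed by the vanishing of the tail $\int_{\delta t/2}^{\infty}\int_{\R}|U_{j}(w,\tau)|\,dw\,d\tau$ using $U_{j}\in L^{1}(\R\times[0,\infty))$ from Lemma~\ref{lem.Uj-L1}. No discrepancies to report.
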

\begin{proof}
From \eqref{K-tilde}, we can easily obtain  
\[
\left|\tilde{K}(w, t)\right| \le c_{0}\, \Gamma \left( \frac{l}{2}+\frac{5}{4}\right)t^{-\frac{7}{4}-\frac{l}{2}}, \ \ w\in \R, \ t>0.
\]
Therefore, it follows from the definitions of $I_{2}^{\delta}(t)$ (i.e. \eqref{mathK+L-split}) and $\tilde{K}(w, t)$ (i.e. \eqref{K-tilde}) that 
\[
\limsup_{t \to \infty}t^{\frac{7}{4}+\frac{l}{2}}\left|I_{2}^{\delta}(t)\right|
\le c_{0}\, \Gamma \left( \frac{l}{2}+\frac{5}{4}\right) \lim_{t\to \infty}\int_{\frac{\delta t}{2}}^{\infty} \int_{\R}\left|U_{j}(w, \tau)\right|dwd\tau =0. 
\]
Here, we have used the fact $U_{j} \in L^{1}\left( \R \times [0, \infty) \right)$, comes from \eqref{Uj-L1}. This completes the proof. 
\end{proof}

\begin{lem}\label{lem.I3delta-est}
Under the same assumptions in Theorem~\ref{thm.u-asymp}, we have 
\begin{equation}\label{I3delta-est}
\limsup_{t \to \infty}t^{\frac{7}{4}+\frac{l}{2}}\left|I_{3}^{\delta}(t)\right| \le C\delta. 
\end{equation}
\end{lem}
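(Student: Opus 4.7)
The plan is to decompose the difference $\tilde K(w, t-\tau) - \tilde K(w, t)$ appearing in $I_3^\delta$ via the explicit formula \eqref{K-tilde}, separating the change in the temporal prefactor from the change in the oscillatory phase. Writing $\tilde K(w, s) = c_0\, s^{-\frac{7}{4}-\frac{l}{2}} \Phi(w, s)$ with $\Phi(w, s) := \int_0^\infty r^{\frac{l}{2}+\frac{1}{4}} e^{-r} \cos\Theta(w, s, r)\, dr$, the natural splitting is $\tilde K(w, t-\tau) - \tilde K(w, t) = A_1 + A_2$, where
\begin{equation*}
A_1 := c_0\bigl[(t-\tau)^{-\frac{7}{4}-\frac{l}{2}} - t^{-\frac{7}{4}-\frac{l}{2}}\bigr] \Phi(w, t-\tau), \qquad A_2 := c_0\, t^{-\frac{7}{4}-\frac{l}{2}} \bigl[\Phi(w, t-\tau) - \Phi(w, t)\bigr].
\end{equation*}

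For the $A_1$ piece, I would use that $\tau \le \delta t/2$ with $\delta \in (0, 1)$ implies $(1-\tau/t)^{-\frac{7}{4}-\frac{l}{2}} - 1 \le (1-\delta/2)^{-\frac{7}{4}-\frac{l}{2}} - 1 \le C\delta$, via the elementary inequality $(1-x)^{-\beta} - 1 \le \beta x (1-x)^{-\beta-1}$ applied at $x = \delta/2$. Combined with the trivial bound $|\Phi(w, t-\tau)| \le \Gamma(l/2+5/4)$, this gives $|A_1| \le C\delta\, t^{-\frac{7}{4}-\frac{l}{2}}$ uniformly in $w$ and $\tau$. Using $\int_\R |U_j(w, \tau)|\, dw \le C(1+\tau)^{-3/2}$ from Lemma~\ref{lem.u^p+1-est} and integrating in $\tau$ yields that the contribution of $A_1$ to $t^{\frac{7}{4}+\frac{l}{2}} |I_3^\delta|$ is bounded by $C\delta \int_0^\infty (1+\tau)^{-3/2}\, d\tau \le C\delta$ uniformly in $t$.

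For the $A_2$ piece, the phase difference $\Theta(w, t-\tau, r) - \Theta(w, t, r) = \frac{w^2 \sqrt{r}}{4\e\sqrt{\nu}}[(t-\tau)^{-3/2} - t^{-3/2}]$ tends to zero pointwise in $(w, \tau, r)$ as $t \to \infty$, so dominated convergence in $r$ (with integrable dominant $2 r^{l/2+1/4} e^{-r}$) yields $\Phi(w, t-\tau) - \Phi(w, t) \to 0$ pointwise in $(w, \tau)$. Since $|t^{\frac{7}{4}+\frac{l}{2}} A_2|$ is uniformly bounded by $2 c_0 \Gamma(l/2+5/4)$ and $U_j \in L^1(\R \times [0, \infty))$ by Lemma~\ref{lem.Uj-L1}, a second application of dominated convergence (extending the integrand by zero for $\tau > \delta t/2$) shows that the contribution of $A_2$ to $t^{\frac{7}{4}+\frac{l}{2}} I_3^\delta(t)$ vanishes as $t \to \infty$. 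Combining the two estimates yields $\limsup_{t\to\infty} t^{\frac{7}{4}+\frac{l}{2}} |I_3^\delta(t)| \le C\delta$.

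The main obstacle is the $A_2$ piece. A direct mean-value argument in $t$ would differentiate the cosine phase and produce a factor $w^2$, but no weighted $w$-moment of $U_j$ is available under the assumptions of Theorem~\ref{thm.u-asymp}. The detour through pointwise convergence of $\Phi(w, t-\tau) - \Phi(w, t)$ followed by dominated convergence circumvents this, and is consistent with the paper's overall objective of avoiding weight conditions on the initial data $u_0$.
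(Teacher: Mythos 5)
Your proof is correct, but it follows a genuinely different route from the paper's. The paper splits the $w$-integral at $|w|=t^{3/4}$: on the outer region it uses only the uniform bound on $\tilde K$ together with $U_j\in L^1(\R\times[0,\infty))$ and dominated convergence, while on the inner region it applies the mean value theorem in $t$ to the whole kernel, computes $\p_t\tilde K$ explicitly, and absorbs the resulting $w^{2}t^{-17/4-l/2}$ term using $w^{2}\le t^{3/2}$ there; the factor $|\tau|\le \delta t/2$ then produces the $C\delta$. You instead split the difference $\tilde K(w,t-\tau)-\tilde K(w,t)$ into an amplitude part $A_1$ (change of the $t^{-7/4-l/2}$ prefactor) and a phase part $A_2$ (change of the oscillatory integral $\Phi$): the $A_1$ part yields $C\delta$ by an elementary inequality uniformly in $w$, and the $A_2$ part is shown to contribute nothing in the limit by two applications of dominated convergence, since the phase $\Theta(w,s,r)$ converges pointwise as $s\to\infty$. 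Your closing observation correctly identifies the $w^{2}$ factor as the obstacle to a naive mean-value argument; the paper's answer to it is the spatial cutoff $|w|\le t^{3/4}$ rather than your amplitude/phase decomposition. Both arguments use only $U_j\in L^1$ and no $w$-moments, and both give a constant $C$ independent of $\delta$, as required for the later choice of $\delta$ in the proof of Proposition~\ref{prop.w-main-lower}. Your version avoids computing $\p_t\tilde K$ and is arguably cleaner; the paper's version is more quantitative in that it bounds $I^{\delta}_{3.2}$ for all finite $t$ rather than only in the $\limsup$, though this extra information is not used.
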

\begin{proof}
First we split the $w$-integral in $I_{3}^{\delta}(t)$ as follows:
\begin{align}
I_{3}^{\delta}(t)&=\int_{0}^{\frac{\delta t}{2}}\int_{|w|\ge t^{\frac{3}{4}}}\left( \tilde{K}(w, t-\tau)-\tilde{K}(w, t) \right) U_{j}(w, \tau)dwd\tau \nonumber \\
&\ \ \ +\int_{0}^{\frac{\delta t}{2}}\int_{|w|\le t^{\frac{3}{4}}}\left( \tilde{K}(w, t-\tau)-\tilde{K}(w, t) \right) U_{j}(w, \tau)dwd\tau 
=:I_{3.1}^{\delta}(t)+I_{3.2}^{\delta}(t). \label{I3delta-split}
\end{align}

For $I_{3.1}^{\delta}(t)$, according to \eqref{K-tilde}, we obtain 
\[
\left| \tilde{K}(w, t-\tau)-\tilde{K}(w, t) \right| \le c_{0}\, \Gamma \left( \frac{l}{2}+\frac{5}{4}\right)\left\{ (t-\tau)^{-\frac{7}{4}-\frac{l}{2}} + t^{-\frac{7}{4}-\frac{l}{2}} \right\}, \ \ w\in \R, \ t>0.
\]
Therefore, we have
\[
\left|I_{3.1}^{\delta}(t)\right| \le c_{0}\, \Gamma \left( \frac{l}{2}+\frac{5}{4}\right)\left\{ \left(1-\frac{\delta}{2}\right)^{-\frac{7}{4}-\frac{l}{2}} + 1 \right\}t^{-\frac{7}{4}-\frac{l}{2}} \int_{0}^{\frac{\delta t}{2}}\int_{|w|\ge t^{\frac{3}{4}}} \left|U_{j}(w, \tau)\right|dwd\tau, \ \ t>0.
\]
Here, we note that $U_{j} \in L^{1}\left( \R \times [0, \infty) \right)$ from \eqref{Uj-L1}. 
Thus, applying the Lebesgue dominated convergence theorem, we get 
\begin{equation}\label{I3.1delta-est}
\limsup_{t \to \infty}t^{\frac{7}{4}+\frac{l}{2}}\left|I_{3.1}^{\delta}(t)\right|
\le C\lim_{t\to \infty}\int_{0}^{\infty} \int_{|w|\ge t^{\frac{3}{4}}}\left|U_{j}(w, \tau)\right|dwd\tau =0. 
\end{equation}

On the other hand, from the mean value theorem, there exists $\theta=\theta(t, \tau)\in  (0,1)$ such that 
\begin{equation}\label{tildeK-mean}
\tilde{K}(w, t-\tau)-\tilde{K}(w, t)=-\tau(\p_{t}\tilde{K})(w, t-\theta \tau). 
\end{equation}
Moreover, it follows from the definition of $\tilde{K}(w, t)$ (i.e. \eqref{K-tilde}) that 
\begin{align*}
\p_{t}\tilde{K}(w, t)
&=c_{0}\left(-\frac{7}{4}-\frac{l}{2}\right)t^{-\frac{11}{4}-\frac{l}{2}}\int_{0}^{\infty}r^{\frac{l}{2}+\frac{1}{4}}e^{-r} \cos \left(\Theta(w, t, r)\right)dr \\
&\ \ \ \ +\frac{3c_{0}w^{2}}{8\e \sqrt{\nu}}t^{-\frac{17}{4}-\frac{l}{2}}\int_{0}^{\infty}r^{\frac{l}{2}+\frac{3}{4}}e^{-r} \sin \left(\Theta(w, t, r)\right)dr, 
\end{align*}
where $c_{0}$ and $\Theta(w, t, r)$ are defined by \eqref{DEF-Theta}. Therefore, if $|w|\le t^{\frac{3}{4}}$, we have 
\begin{equation}\label{tildeK-mean-est}
\left|\p_{t}\tilde{K}(w, t)\right|\le c_{0}\left(\frac{7}{4}+\frac{l}{2}\right)t^{-\frac{11}{4}-\frac{l}{2}}\Gamma\left(\frac{l}{2}+\frac{5}{4}\right)
+\frac{3c_{0}t^{\frac{3}{2}}}{8\e \sqrt{\nu}}t^{-\frac{17}{4}-\frac{l}{2}}\Gamma\left(\frac{l}{2}+\frac{7}{4}\right) 
\le Ct^{-\frac{11}{4}-\frac{l}{2}}. 
\end{equation}
Hence, from the definition of $I_{3.2}^{\delta}(t)$ (i.e. \eqref{I3delta-split}), \eqref{tildeK-mean} and \eqref{tildeK-mean-est}, we arrive at 
\begin{align*}
\left|I_{3.2}^{\delta}(t)\right| 
&\le C\int_{0}^{\frac{\delta t}{2}}\int_{|w|\le t^{\frac{3}{4}}} |\tau|(t-\theta \tau)^{-\frac{11}{4}-\frac{l}{2}}\left|U_{j}(w, \tau)\right|dwd\tau \\
&\le C\cdot \frac{\delta}{2}\left(1-\frac{\delta}{2}\right)^{-\frac{11}{4}-\frac{l}{2}} t^{-\frac{7}{4}-\frac{l}{2}}\int_{0}^{\frac{\delta t}{2}}\int_{|w|\le t^{\frac{3}{4}}}\left|U_{j}(w, \tau)\right|dwd\tau, \ \ t>0.
\end{align*}
Finally, using the fact $U_{j} \in L^{1}\left( \R \times [0, \infty) \right)$ which comes from \eqref{Uj-L1} again, we obtain 
\begin{equation}\label{I3.2delta-est}
\limsup_{t \to \infty}t^{\frac{7}{4}+\frac{l}{2}}\left|I_{3.2}^{\delta}(t)\right| \le C\delta. 
\end{equation}
Combining \eqref{I3delta-split}, \eqref{I3.1delta-est} and \eqref{I3.2delta-est}, we eventually arrive at \eqref{I3delta-est}. This completes the proof. 
\end{proof}

\begin{lem}\label{lem.I1-est}
Under the same assumptions in Theorem~\ref{thm.u-asymp}, there exist positive constants $C_{0}>0$ and $T_{0}>0$ such that the following estimate holds: 
\begin{equation}\label{I1-est-lower}
\left|I_{1}(t)\right|  \ge C_{0}\left|\mathcal{M}_{j}\right|t^{-\frac{7}{4}-\frac{l}{2}}, \ \ t \ge T_{0}, 
\end{equation}
where $I_{1}(t)$ and $\mathcal{M}_{j}$ are defined by \eqref{mathK+L-split} and \eqref{DEF-mathM}, respectively. 
\end{lem}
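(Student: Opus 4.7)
The plan is to extract the explicit $t^{-7/4-l/2}$ scaling from $\tilde{K}(w,t)$ and then show, via dominated convergence, that $t^{7/4+l/2}I_1(t)$ converges to a nonzero multiple of $\mathcal{M}$. Using the formula \eqref{K-tilde} for $\tilde{K}(w,t)$ together with the notation $c_0$ and $\Theta(w,t,r)$ from \eqref{DEF-Theta}, I would rewrite
\[
t^{\frac{7}{4}+\frac{l}{2}} I_1(t) = c_0 \int_{\R} \int_{0}^{\infty} r^{\frac{l}{2}+\frac{1}{4}} e^{-r} \cos\bigl(\Theta(w,t,r)\bigr)\,dr\, Q_j(w)\,dw.
\]
The key observation is that for each fixed $(w,r) \in \R \times (0,\infty)$, the quantity $\frac{w^2}{4\e}\sqrt{r/\nu}\,t^{-3/2}$ tends to $0$ as $t\to\infty$, so $\Theta(w,t,r) \to \theta_* := -\frac{\pi}{4}\e + \frac{(l+1)\pi}{2}$ pointwise.

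Next I would apply the Lebesgue dominated convergence theorem on $\R \times (0,\infty)$: the integrand is bounded in absolute value by $r^{l/2+1/4} e^{-r} |Q_j(w)|$, which is integrable thanks to $Q_j \in L^1(\R)$ from Lemma~\ref{lem.Uj-L1} and the integrability of $r^{l/2+1/4}e^{-r}$. This yields
\[
\lim_{t\to\infty} t^{\frac{7}{4}+\frac{l}{2}} I_1(t) = c_0 \cos(\theta_*) \Gamma\!\left(\frac{l}{2}+\frac{5}{4}\right) \int_{\R} Q_j(w)\,dw = c_0 \cos(\theta_*) \Gamma\!\left(\frac{l}{2}+\frac{5}{4}\right) \mathcal{M},
\]
where the last equality uses the identity $\int_{\R}Q_j(w)dw=\mathcal{M}$ from \eqref{Uj-M}.

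The remaining point is to verify $\cos(\theta_*) \neq 0$, so that the above limit has definite sign (up to the sign of $\mathcal{M}$). Since $\e \in \{-1,1\}$ and $l\ge 0$ is an integer, a direct calculation gives $\theta_* \in \left\{\pm\frac{\pi}{4}\right\} + \frac{\pi}{2}\mathbb{Z}$, so $\cos(\theta_*) = \pm\frac{\sqrt{2}}{2}$; in particular $|\cos(\theta_*)| = \frac{\sqrt{2}}{2} > 0$. Setting
\[
C_{\natural} := c_0 \,|\cos(\theta_*)|\, \Gamma\!\left(\tfrac{l}{2}+\tfrac{5}{4}\right) = \frac{\sqrt{2}}{8\pi^{3/2}\nu^{l/2+5/4}}\,\Gamma\!\left(\tfrac{l}{2}+\tfrac{5}{4}\right) > 0,
\]
the computed limit yields $\lim_{t\to\infty} t^{7/4+l/2}|I_1(t)| = C_{\natural}|\mathcal{M}|$. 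Hence for all $t$ beyond some threshold $T_0$, the lower bound $|I_1(t)| \ge \tfrac{1}{2}C_{\natural}|\mathcal{M}|\, t^{-7/4-l/2}$ holds, which is \eqref{I1-est-lower} with $C_0 := \tfrac{1}{2}C_{\natural}$.

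I do not expect a serious obstacle: the computation is essentially an oscillatory-integral limit where the phase tends to a constant, and dominated convergence applies cleanly. The only subtlety worth verifying carefully is that $\cos(\theta_*)$ is nonzero uniformly in $l$ and $\e$, which the elementary trigonometric check above settles.
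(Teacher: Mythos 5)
Your proof is correct, and it reaches the conclusion by a genuinely cleaner route than the paper. The paper splits the $w$-integral at $|w|=\kappa t^{3/4}$ into $I_{1.1}$ and $I_{1.2}$ as in \eqref{I1-split}, kills the tail $I_{1.1}$ by the integrability of $Q_{j}$, and on the main region expands $\cos(\Theta(w,t,r))$ about $\alpha=-\frac{\pi}{4}\e+\frac{(l+1)\pi}{2}$ via the mean value theorem, absorbing the error into a term of size $\kappa^{2}\Gamma(\frac{l}{2}+\frac{7}{4})\|Q_{j}\|_{L^{1}}$ and then tuning $\kappa$ so that this is at most a quarter of the main contribution. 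You bypass the splitting and the Taylor expansion entirely: since $\Theta(w,t,r)\to\theta_{*}$ pointwise and the integrand is dominated by $r^{l/2+1/4}e^{-r}|Q_{j}(w)|\in L^{1}(\R\times(0,\infty))$ (using $Q_{j}\in L^{1}(\R)$ from Lemma~\ref{lem.Uj-L1}), one application of dominated convergence gives the exact limit $\lim_{t\to\infty}t^{7/4+l/2}I_{1}(t)=c_{0}\cos(\theta_{*})\Gamma(\frac{l}{2}+\frac{5}{4})\mathcal{M}$ via \eqref{Uj-M}, which is strictly more information than the lower bound \eqref{I1-est-lower} (it is an asymptotic equality, not just a one-sided estimate), and your check that $|\cos(\theta_{*})|=\frac{\sqrt{2}}{2}$ for all integer $l\ge0$ and $\e\in\{-1,1\}$ is exactly the nondegeneracy the paper also relies on through $|\cos\alpha|$. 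What the paper's version buys in exchange for its extra length is an explicitly quantified error split that meshes with the surrounding $\delta$- and $\kappa$-bookkeeping in the proof of Proposition~\ref{prop.w-main-lower}; your version is shorter, identifies the limiting constant explicitly, and still yields $T_{0}$ (depending on the data through $|\mathcal{M}|$, which is harmless here, and trivially so when $\mathcal{M}=0$).
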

\begin{proof}
First, we take $\kappa>0$ and then split the integral in $I_{1}(t)$ as follows: 
\begin{align}
I_{1}(t) 
&= \int_{|w|\ge \kappa t^{\frac{3}{4}}}\p_{x}^{l+1}K(0, -w, t)Q_{j}(w)dw + \int_{|w|\le \kappa t^{\frac{3}{4}}}\p_{x}^{l+1}K(0, -w, t)Q_{j}(w)dw \nonumber \\
&=: I_{1.1}(t) + I_{1.2}(t). \label{I1-split}
\end{align}

Now, let us evaluate $I_{1.1}(t)$. From \eqref{K-tilde} and the definition of $Q_{j}(w)$ (i.e. \eqref{DEF-Qj}), we have 
\[
\left|I_{1.1}(t)\right| \le c_{0}\, \Gamma \left(\frac{l}{2}+\frac{5}{4}\right) t^{-\frac{7}{4}-\frac{l}{2}} \left( \int_{|w|\ge \kappa t^{\frac{3}{4}}} \left|M_{j}(w)\right| dw + \int_{0}^{\infty}\int_{|w|\ge \kappa t^{\frac{3}{4}}} \left| U_{j}(w, \tau) \right| dwd\tau \right), \ \ t>0,
\]
where $M_{j}(w)$ and $U_{j}(w, \tau)$ are defined by \eqref{M(y)} and \eqref{DEF-Uj}, respectively. 
Here, from Lemma~\ref{lem.Uj-L1}, we note that $M_{j} \in L^{1}(\R)$ and $U_{j} \in L^{1}(\R \times [0, \infty))$. 
Therefore, we obtain 
\begin{equation*}
\limsup_{t \to \infty}t^{\frac{7}{4}+\frac{l}{2}}\left|I_{1.1}(t)\right|=0. 
\end{equation*}
Thus, for any $\eta>0$, there exists $T_{1}>0$ such that the following estimate holds: 
\begin{equation}\label{I1.1-est}
t^{\frac{7}{4}+\frac{l}{2}}\left|I_{1.1}(t)\right| \le \eta, \ \ t\ge T_{1}.  
\end{equation}

Next, we would like to deal with $I_{1.2}(t)$. In what follows, for simplicity, we set 
\begin{equation*}
\alpha:=-\frac{\pi}{4}\e + \frac{(l+1)\pi}{2}. 
\end{equation*}
Before doing evaluate $I_{1.2}(t)$, we transform $\cos \left( \Theta (w, t, r)\right)$, where $\Theta(w, t, r)$ is defined by \eqref{DEF-Theta}. 
By using properties of the trigonometric functions and the mean value theorem, we obtain
\begin{align}
\cos \left( \Theta (w, t, r)\right)
&=\cos\left( \frac{w^{2}}{4\e}\sqrt{\frac{r}{\nu}}t^{-\frac{3}{2}} \right)\cos \alpha - \sin\left( \frac{w^{2}}{4\e}\sqrt{\frac{r}{\nu}}t^{-\frac{3}{2}} \right)\sin \alpha \nonumber \\
&=\left\{ 1- \frac{w^{2}}{4\e}\sqrt{\frac{r}{\nu}}t^{-\frac{3}{2}} \sin\left( \frac{\theta w^{2}}{4\e}\sqrt{\frac{r}{\nu}}t^{-\frac{3}{2}} \right) \right\} \cos \alpha
- \sin\left( \frac{w^{2}}{4\e}\sqrt{\frac{r}{\nu}}t^{-\frac{3}{2}} \right)\sin \alpha. \label{cosTheta}
\end{align}
Here, $\theta=\theta(w, t, r)\in (0, 1)$ came from the mean value theorem. 
Therefore, noting 
\[
\left|\sin\left( \frac{\theta w^{2}}{4\e}\sqrt{\frac{r}{\nu}}t^{-\frac{3}{2}} \right)\right| \le 1, \quad 
\left|\sin\left( \frac{w^{2}}{4\e}\sqrt{\frac{r}{\nu}}t^{-\frac{3}{2}} \right)\right| \le \frac{w^{2}}{4|\e|}\sqrt{\frac{r}{\nu}}t^{-\frac{3}{2}}, \ \ w\in \R, \ t>0, \ r>0 
\]
and the definition of $I_{1.2}(t)$ (i.e. \eqref{I1-split}), it follows from \eqref{K-tilde}, \eqref{DEF-Theta} and \eqref{cosTheta} that 
\begin{align}
\left|I_{1.2}(t)\right|
&= \left| \int_{|w|\le \kappa t^{\frac{3}{4}}} \left( c_{0}t^{-\frac{7}{4}-\frac{l}{2}}\int_{0}^{\infty}r^{\frac{l}{2}+\frac{1}{4}}e^{-r}\cos \left( \Theta (w, t, r)\right)dr \right) Q_{j}(w) dw \right| \nonumber \\
&\ge c_{0}t^{-\frac{7}{4}-\frac{l}{2}}
\Biggl\{ |\cos \alpha|\,\Gamma\left( \frac{l}{2}+\frac{5}{4} \right) \left| \int_{|w|\le \kappa t^{\frac{3}{4}}} Q_{j}(w)dw \right| \nonumber \\
&\ \ \ -\left(|\cos \alpha|+|\sin \alpha|\right)\,\Gamma\left( \frac{l}{2}+\frac{7}{4} \right)\frac{t^{-\frac{3}{2}}}{4|\e|\sqrt{\nu}} \left| \int_{|w|\le \kappa t^{\frac{3}{4}}} w^{2}Q_{j}(w)dw \right| \Biggl\}, \ \ t>0. \label{I1.2-est-pre1}
\end{align}
Moreover, since 
\[
\left| \int_{|w|\le \kappa t^{\frac{3}{4}}} w^{2}Q_{j}(w)dw \right| \le \kappa^{2}t^{\frac{3}{2}}\left\|Q_{j}\right\|_{L^{1}}
\]
and $|\cos \alpha|+|\sin \alpha|\le 2$, 
we have from \eqref{I1.2-est-pre1} that 
\begin{equation}\label{I1.2-est-pre2}
t^{\frac{7}{4}+\frac{l}{2}}\left|I_{1.2}(t)\right|
\ge c_{0}|\cos \alpha|\,\Gamma\left( \frac{l}{2}+\frac{5}{4} \right)\left| \int_{|w|\le \kappa t^{\frac{3}{4}}} Q_{j}(w)dw \right|
-\frac{\kappa^{2}\Gamma\left( \frac{l}{2}+\frac{7}{4} \right)}{2|\e|\sqrt{\nu}}\left\|Q_{j}\right\|_{L^{1}}, \ \ t>0.
\end{equation}
Here, recalling the definition of $Q_{j}(w)$ (i.e. \eqref{DEF-Qj}) and noting that 
\[
\lim_{t \to \infty} \int_{|w|\le \kappa t^{\frac{3}{4}}} Q_{j}(w)dw = \mathcal{M}_{j}, 
\]
we can see that there exists $T_{2}>0$ such that the following inequality holds: 
\begin{equation}\label{Qj^-int}
\left| \int_{|w|\le \kappa t^{\frac{3}{4}}} Q_{j}(w)dw \right| \ge \frac{\left|\mathcal{M}_{j}\right|}{2}, \ \ t\ge T_{2}. 
\end{equation}
Therefore, choosing the positive constant $\kappa>0$ which satisfies 
\begin{equation}\label{kappa-choose}
\frac{\kappa^{2}\Gamma\left( \frac{l}{2}+\frac{7}{4} \right)}{2|\e|\sqrt{\nu}}\left\|Q_{j}\right\|_{L^{1}}
=\frac{c_{0}|\cos \alpha| \Gamma\left( \frac{l}{2}+\frac{5}{4} \right)}{4}\left|\mathcal{M}_{j}\right|, 
\end{equation}
then it follows from \eqref{I1.2-est-pre2}, \eqref{Qj^-int} and \eqref{kappa-choose} that 
\begin{equation}\label{I1.2-est}
t^{\frac{7}{4}+\frac{l}{2}}\left|I_{1.2}(t)\right|
\ge \frac{c_{0}|\cos \alpha| \Gamma\left( \frac{l}{2}+\frac{5}{4} \right)}{4}\left|\mathcal{M}_{j}\right|, \ \ t\ge T_{2}. 
\end{equation}

Finally, if we choose the positive constant $\eta>0$ which satisfies 
\begin{equation}\label{eta-choose}
\eta = \frac{c_{0}|\cos \alpha| \Gamma\left( \frac{l}{2}+\frac{5}{4} \right)}{8}\left|\mathcal{M}_{j}\right| 
\end{equation}
and set 
\begin{equation}\label{DEF-C0-T0}
C_{0}:=\frac{c_{0}|\cos \alpha| \Gamma\left( \frac{l}{2}+\frac{5}{4} \right)}{8}, \quad T_{0}:=\max\{T_{1}, T_{2}\}, 
\end{equation}
then it follows from \eqref{I1-split}, \eqref{I1.1-est}, \eqref{I1.2-est}, \eqref{eta-choose} and \eqref{DEF-C0-T0} that 
\begin{align*}
t^{\frac{7}{4}+\frac{l}{2}}\left|I_{1}(t)\right|
&\ge t^{\frac{7}{4}+\frac{l}{2}}\left|I_{1.2}(t)\right|-t^{\frac{7}{4}+\frac{l}{2}}\left|I_{1.1}(t)\right| \\
&\ge \frac{c_{0}|\cos \alpha| \Gamma\left( \frac{l}{2}+\frac{5}{4} \right)}{4}\left|\mathcal{M}_{j}\right|-\frac{c_{0}|\cos \alpha| \Gamma\left( \frac{l}{2}+\frac{5}{4} \right)}{8}\left|\mathcal{M}_{j}\right|
=C_{0}\left|\mathcal{M}_{j}\right|, \ \ t\ge T_{0}. 
\end{align*}
Therefore, we have the desired estimate \eqref{I1-est-lower}. This completes the proof.  
\end{proof}

\begin{proof}[\rm{\bf{Proof of Proposition~\ref{prop.w-main-lower}}}]
First, based on the discussion given in the above, let us define
\[
Y^{\delta}(t):=\left|I_{2}^{\delta}(t)\right| + \left|I_{3}^{\delta}(t)\right| + \left\|R_{1}^{\delta}(\cdot, \cdot, t)\right\|_{L^{\infty}} + \left\|R_{2}^{\delta}(\cdot, \cdot, t)\right\|_{L^{\infty}}, \ \ t>0.
\]
Then, for any $\delta \in (0, 1)$, we have from \eqref{D_K-split-R+L} and \eqref{mathK+L-split} that 
\begin{align}
&\left\|\p_{x}^{l}\mathcal{K}_{j}(\cdot, \cdot, t)+\p_{x}^{l}\p_{y}^{j}D_{K}(\cdot, \cdot, t)\right\|_{L^{\infty}} \nonumber \\
&\ge \left\|\p_{x}^{l}\mathcal{K}_{j}(\cdot, \cdot, t)+L^{\delta}(\cdot, \cdot, t)\right\|_{L^{\infty}}
-\left( \left\|R_{1}^{\delta}(\cdot, \cdot, t)\right\|_{L^{\infty}} + \left\|R_{2}^{\delta}(\cdot, \cdot, t)\right\|_{L^{\infty}} \right) \nonumber \\
&\ge  \left|\p_{x}^{l}\mathcal{K}_{j}(0, 0, t)+L^{\delta}(0, 0, t)\right|
-\left( \left\|R_{1}^{\delta}(\cdot, \cdot, t)\right\|_{L^{\infty}} + \left\|R_{2}^{\delta}(\cdot, \cdot, t)\right\|_{L^{\infty}} \right) \nonumber \\
&\ge \left\{\left|I_{1}(t)\right|-\left(\left|I_{2}^{\delta}(t)\right| + \left|I_{3}^{\delta}(t)\right|\right) \right\}
-\left( \left\|R_{1}^{\delta}(\cdot, \cdot, t)\right\|_{L^{\infty}} + \left\|R_{2}^{\delta}(\cdot, \cdot, t)\right\|_{L^{\infty}} \right)  \nonumber \\
&= \left|I_{1}(t)\right|-Y^{\delta}(t), \ \ t>0. \label{w-main-lower-pre}
\end{align}
Moreover, from Lemmas~\ref{lem.R1-R2-est}, \ref{lem.I2delta-est} and \ref{lem.I3delta-est}, we can see that 
\[
\limsup_{t \to \infty} t^{\frac{7}{4}+\frac{l}{2}}Y^{\delta}(t) \le C\delta. 
\]
Thus, there exists $\tilde{T}_{0}>0$ such that the following estimate holds: 
\begin{equation}\label{Ydelta-est}
0 \le t^{\frac{7}{4}+\frac{l}{2}}Y^{\delta}(t) \le 2C\delta, \ \ t\ge \tilde{T}_{0}. 
\end{equation}

Finally, since $\delta>0$ can be chosen arbitrarily small, we shall take 
\begin{equation}\label{delta-choose}
\delta = \frac{C_{0}}{4C}\left|\mathcal{M}_{j}\right|. 
\end{equation}
Moreover, for $C_{0}>0$ and $T_{0}>0$ which are appeared in Lemma~\ref{lem.I1-est}, we set 
\begin{equation}\label{DEF-C*-T*}
C_{*}:=\frac{C_{0}}{2}, \quad T_{*}:=\max\{T_{0}, \tilde{T}_{0}\}. 
\end{equation}
Then, combining \eqref{w-main-lower-pre}, \eqref{I1-est-lower}, \eqref{Ydelta-est}, \eqref{delta-choose} and \eqref{DEF-C*-T*}, we eventually arrive at 
\[
t^{\frac{7}{4}+\frac{l}{2}}\left\|\p_{x}^{l}\mathcal{K}_{j}(\cdot, \cdot, t)+\p_{x}^{l}\p_{y}^{j}D_{K}(\cdot, \cdot, t)\right\|_{L^{\infty}}
\ge C_{0}\left|\mathcal{M}_{j}\right|-2C\delta=C_{*}\left|\mathcal{M}_{j}\right|, \ \ t\ge T_{*}. 
\]
Thus, we can see that the desired estimate \eqref{w-main-lower} is true. This completes the proof. 
\end{proof}

\begin{proof}[\rm{\bf{End of the Proof of Theorem~\ref{thm.u-est-lower}}}]
It follows from Theorems~\ref{thm.u-asymp}, \eqref{w-sol} and \eqref{Duhamel-main} that 
\begin{align}
\left\|\p_{x}^{l}\p_{y}^{j}u(\cdot, \cdot, t)\right\|_{L^{\infty}} 
&\ge \left\|\p_{x}^{l}\p_{y}^{j}w(\cdot, \cdot, t)\right\|_{L^{\infty}}-Ct^{-\frac{9}{4}-\frac{l}{2}} \nonumber \\
&\ge \left\|\p_{x}^{l}\mathcal{K}_{j}(\cdot, \cdot, t)+\p_{x}^{l}\p_{y}^{j}D_{K}(\cdot, \cdot, t)\right\|_{L^{\infty}} \nonumber \\
&\ \ \ \,-\left\|\p_{x}^{l}\p_{y}^{j}\left(K(t)*u_{0}\right)(\cdot, \cdot)-\p_{x}^{l}\mathcal{K}_{j}(\cdot, \cdot, t)\right\|_{L^{\infty}}-Ct^{-\frac{9}{4}-\frac{l}{2}}, \ \ t\ge2. \label{u-est-lower-final1}
\end{align}
By virtue of Theorem~\ref{thm.L-ap-LKPB}, there exists $\tilde{T}_{*}>0$ such that 
\begin{equation}\label{final-pre}
t^{\frac{7}{4}+\frac{l}{2}}\left\|\p_{x}^{l}\p_{y}^{j}\left(K(t)*u_{0}\right)(\cdot, \cdot)-\p_{x}^{l}\mathcal{K}_{j}(\cdot, \cdot, t)\right\|_{L^{\infty}} \le \frac{C_{*}}{3}\left|\mathcal{M}_{j}\right|, \ \ 
Ct^{-\frac{1}{2}} \le \frac{C_{*}}{3}\left|\mathcal{M}_{j}\right|, \ \ t\ge \tilde{T}_{*}, 
\end{equation}
where $C_{*}>0$ is the constant appeared in Proposition~\ref{prop.w-main-lower}. Moreover, for $T_{*}>0$ which is appeared in Proposition~\ref{prop.w-main-lower}, we set 
\begin{equation}\label{DEF-Cdag-Tdag}
C_{\dag}:=\frac{C_{*}}{3}, \quad T_{\dag}:=\max\{T_{*}, \tilde{T}_{*}, 2\}. 
\end{equation}
Therefore, summarizing up \eqref{u-est-lower-final1}, \eqref{final-pre}, Proposition~\ref{prop.w-main-lower} and \eqref{DEF-Cdag-Tdag}, we obtain 
\[
t^{\frac{7}{4}+\frac{l}{2}}\left\|\p_{x}^{l}\p_{y}^{j}u(\cdot, \cdot, t)\right\|_{L^{\infty}} \ge C_{*}\left|\mathcal{M}_{j}\right|-\frac{2C_{*}}{3}\left|\mathcal{M}_{j}\right|=C_{\dag}\left|\mathcal{M}_{j}\right|, \ \ t\ge T_{\dag}. 
\]
Thus, the desired estimate \eqref{u-est-lower} has been proven. This completes the proof of Theorem~\ref{thm.u-est-lower}. 
\end{proof}

\section*{Acknowledgments}
The first author is supported by Grant-in-Aid for Research Activity Start-up No.20K22303, Japan Society for the Promotion of Science. The second author is supported by Grant-in-Aid for Young Scientists Research~(B) No.17K14220, Japan Society for the Promotion of Science.



\bigskip
\par\noindent
\begin{flushleft}Ikki Fukuda\\
Division of Mathematics and Physics, \\
Faculty of Engineering, \\
Shinshu University\\
4-17-1, Wakasato, Nagano, 380-8553, JAPAN\\
E-mail: i\_fukuda@shinshu-u.ac.jp
\vskip15pt
Hiroyuki Hirayama\\
Faculty of Education, \\
University of Miyazaki, \\
1-1, Gakuenkibanadai-nishi, Miyazaki, 889-2192, JAPAN\\
E-mail: h.hirayama@cc.miyazaki-u.ac.jp
\end{flushleft}

\end{document}